\documentclass[a4paper,oneside,11pt]{article} % Indique que le document est de type standard ("article"), au format A4 ("a4"), en recto simple ("oneside"), avec des fontes de taille moyenne ("11pt").
\usepackage[utf8]{inputenc} % Encodage des caractères du fichier-source.
\usepackage[T1]{fontenc} % Encodage des caractères en sortie.
\usepackage{textcomp} % Jeu de symboles complémentaires.
\usepackage[english]{babel} % Support de la langue française.
\usepackage{indentfirst}
\usepackage[autolanguage]{numprint} %% Formatage des nombres.
\usepackage{hyperref} % Génère des liens hypertexte dans le fichier pdf.
\hypersetup{colorlinks=true,linkcolor=blue,citecolor=red,urlcolor=blue}
\usepackage{graphicx} % Permet l'insertion d'images.
\usepackage{verbatim} % Définit des environnements de texte préformaté et de commentaires.
\usepackage{makeidx} % Permet la création d'un index
\usepackage[refpage]{nomencl} % Permet la création d'un index des notations
\usepackage{enumitem} % Permet les listes
\usepackage{tikz} % Pour les dessins en Tikz
\usetikzlibrary{matrix,arrows,patterns}
\usepackage{fullpage}
\usepackage{comment}
\usepackage{stmaryrd}
\usepackage{ulem}
\usepackage{tikz}
\usepackage[mathscr]{euscript}

\usepackage{amsmath,amssymb,amsthm,amsfonts} % Toutes ces extensions proviennent de la classe AMS-LaTeX.
\usepackage[all]{xy} % pour des diagrammes (-commutatifs)

\numberwithin{equation}{section}

\newcounter{Main}

\theoremstyle{plain} % Pour tous ce qui ressemble à des théorèmes.

\newtheorem{MainThm}[Main]{Theorem} % Théorèmes principaux
\swapnumbers % Avec cette ligne, le numéro des théorèmes s'affiche avant le mot "Théorème".
\theoremstyle{definition} % Pour tout ce qui ressemble à des définitions : définitions, notations...
\newtheorem{Def}{Definition}[section] % Définitions. Avec l'option "[section]", les définitions (et tout le reste) seront numérotées en fonction de la section courante.
\newtheorem{Def,Thm}[Def]{Definition and theorem} %Pour ce qui est une définition par un théorème
\newtheorem{Def,Prop}[Def]{Proposition-definition} %Pour ce qui est une définition par un théorème
\newtheorem{Not}[Def]{Notation}
\theoremstyle{plain} % Pour tous ce qui ressemble à des théorèmes.
\newtheorem{prop}[Def]{Proposition} % Propositions. Avec l'option "[Def]", les propositions (et tout le reste) seront numérotées collectivement avec les définitions.
\newtheorem{lem}[Def]{Lemma} % Lemmes.

\newtheorem{theo}[Def]{Theorem} % Théorèmes.
\newtheorem{Corollary}{Corollary}[Def] % Corollaires.
\theoremstyle{remark} % Pour tout ce qui ressemble à des remarques.
\newtheorem{ex}[Def]{Example} % Exemples.
\newtheorem{Remark}[Def]{Remark} % Remarques.
\newcommand{\LL}{\mathscr{L}}

\newcommand{\ZZ}{\mathbb{Z}}

\newcommand{\DD}{\mathfrak{D}}

\newcommand{\oo}{\mathcal{O}}

\newcommand{\ap}{\Sigma}

\newcommand{\BB}{\vec{B}}
\newcommand{\BBB}{(\!(\vec{B})\!)}

\newcommand{\A}{R}

\newcommand{\R}{\mathbb{R}}

\usepackage{xcolor}
\usepackage[all]{xy}

\newcommand\bmattrix[4]{\left(\begin{array}{cc}#1&#2\\#3&#4\end{array}\right)}

\usepackage{mathtools}

\title{A quotient of the Bruhat–Tits building classifying maximal matrix orders over curves}

\author{Luis Arenas-Carmona\footnote{Universidad de Chile, Facultad de Ciencias, Casilla 652, Santiago, Chile. Email: learenas@u.uchile.cl.} \,, Claudio Bravo\footnote{Universidad de Talca, Instituto de Matemáticas, Talca, Chile. Email: claudio.bravo@utalca.cl.} 
\, and Ignacio Saavedra\footnote{Facultad de Ingeniería y Ciencias, Universidad Diego Portales, Av. Ejercito 441, Santiago, Chile. Email: ignacio.saavedra@udp.cl.}} % Remplacez "Auteur" par le nom du ou des auteurs.
\date{}

\begin{document} % Début du document proprement dit.

\maketitle

\begin{abstract}

In a previous work, we defined a quotient graph of the 
Bruhat-Tits tree associated to the degree two special linear 
group over a local field whose vertex set is in correspondence 
with the maximal orders in a certain number of spinor genera.
This graph has been applied to the study of the
structure of some arithmetic groups, their presentations and their
(co)homology groups. The genera of Eichler orders,
which, for instance, are related to the problem of finding a basis
that simultaneously splits two vector bundles with the same 
generic fiber, can often be described in terms of simpler genera
via quotients. Furthermore, some embedding problems of
orders can be studied via quotient graphs. These results
provide a geometric framework for the study of maximal orders over function fields.

In this article, we 
extend that construction to a higher dimensional setting through
the use of singular chain complexes. Then we describe the set of
vertices in this quotient that correspond to split maximal orders
and the topology of the corresponding region of the quotient 
complex. More precisely, we show that these ``split vertices''
fall into a finite number of quotients of apartments by suitable
subgroups of the corresponding Weyl group, the symmetric group. 
We relate these ``quotient apartments'' with the cuspidal sectors 
defined by previous authors who studied the quotient by the group
of integral points of the special linear group. Both quotients
are indeed related by a ramified cover that is unramified
on cusps, as proved here.
\\

\textbf{MSC Codes:} 20G30, 20E42 (primary) 14H05, 14H60, 20H25 (secondary).

\textbf{Keywords:} Bruhat-Tits buildings, Maximal orders,  Arithmetic groups, Global function fields.

\end{abstract}

\section{Introduction}\label{section intro}

The study of arithmetic subgroups of reductive algebraic groups is a central 
theme in modern mathematics, with connections to automorphic forms,
representations, quadratic forms, (co)homology, $K$-theory, complex geometry, 
combinatorial group theory, class field theory, orders (arithmetic of algebras)
and dynamical systems, among other domains. A fundamental approach to 
understanding these subgroups consists in analyzing their action on the associated 
symmetric spaces or, in the non-archimedean setting, on Bruhat-Tits buildings. 
This approach, known as reduction theory \cite{BorelArithmeticsubgroup}, 
seeks to describe the fundamental domains of such actions and, more generally, the 
structure of the corresponding orbit spaces.

In order to study reductive (algebraic) groups over local or global fields, Tits 
introduced certain complexes, called buildings, that are analogous to the symmetric 
spaces associated to reductive groups over $\mathbb{R}$.
More precisely, to any Henselian discretely valued field $\hat{K}$ and any reductive 
$\hat{K}$-group $\mathtt{G}_{\hat{K}}$, Bruhat and Tits associated a polysimplicial 
complex $\mathcal{B} = \mathcal{B}(\mathtt{G}_{\hat{K}})$, called the Bruhat-Tits 
building of $\mathtt{G}_{\hat{K}}$ (cf.~\cite{BT1,BT2}).
This complex can be constructed by gluing an infinite number of copies of the affine 
space $\mathbb{R}^{r}$, where $r$ is the (split) rank of $\mathtt{G}_{\hat{K}}$, 
which is endowed with a semi-regular tessellation.
In the rank one context, i.e., for matrix groups such as $\mathrm{SL}_2$ 
or $\mathrm{SU}_3$, the associated 
Bruhat-Tits building is a semi-homogeneous tree.

In this work we
focus on the function field setting.
Indeed, let $X$ be a smooth projective curve over a field $k$, let 
$P$ be a closed point of $X$, and let $K_P$ be the completion of $K=k(X)$ with respect to the
discrete valuation $\nu_P:K \to \mathbb{Z} \cup \lbrace \infty \rbrace$ induced by $P$.
Let us also denote by $\A=\mathcal{O}_X(X \smallsetminus \{P\})$
the ring of functions on $X$ that are regular outside $\lbrace P \rbrace$.
In \cite{SerreTrees} Serre gives a complete description of the action of 
$\mathrm{SL}_2(\A)$ on its Bruhat-Tits tree. His result can be phrased as follows:

\begin{MainThm}\cite[Ch. II, Th. 9]{SerreTrees}\label{teo serre quot}
Let $\mathcal{T}=\mathcal{B}(\mathrm{SL}_{2,K_P})$ be the Bruhat-Tits tree of 
$\mathrm{SL}_{2,K_P}$.
The (quotient) graph $\mathrm{SL}_2(\A) \backslash \mathcal{T}$ is the union of a 
connected graph $Y$ of finite diameter, and a family of rays called cusps, which are 
indexed by the Picard group $\mathrm{Pic}(\A)\cong
\mathrm{Pic}(X)/\langle[P]_{X}\rangle$, where $[P]_{X}$ is the class in 
$\mathrm{Pic}(X)$ of the divisor $P$. Moreover, $Y$ is finite whenever $k$ is.
\end{MainThm}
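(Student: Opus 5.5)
The plan is Serre's vector-bundle interpretation of the tree. First I identify the vertices of $\mathcal{T}$ with homothety classes of $\oo_{P}$-lattices $\Lambda\subset K_P^2$. Gluing such a lattice with the trivial $\A$-lattice $\A^2$ gives a rank two vector bundle $E_\Lambda$ on $X$. This bundle has generic fiber $K^2$, is trivial on $X\smallsetminus\{P\}$, and has completed stalk $\Lambda$ at $P$. Two lattices give isomorphic bundles via an element of $\mathrm{GL}_2(\A)$ exactly when they lie in the same orbit. To pass to $\mathrm{SL}_2(\A)$, I fix the determinant. The classes $\det E_\Lambda$ vary in $\mathrm{Pic}(X)$, and multiplying $\Lambda$ by a power of the uniformizer changes $\det E_\Lambda$ by $2[P]$. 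So $\mathrm{SL}_2(\A)\backslash\mathcal{T}$ is in bijection with isomorphism classes of pairs (bundle $E$ trivial off $P$, trivialization of $\det E$ off $P$), up to twisting by $\oo_X(nP)$. Edges correspond to elementary modifications at $P$.

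Next I split the vertices according to decomposability. For each $E$, consider the maximal degree $d$ of a line subbundle $L\subset E$. Riemann–Roch gives a bound $d\geq \frac{\deg E}{2}-g$. If $2d-\deg E> 2g-2$, the destabilizing subbundle is unique and the extension $0\to L\to E\to M\to 0$ splits, because $\mathrm{Ext}^1(M,L)=H^1(L\otimes M^{-1})=0$. Such vertices are therefore of the form $L\oplus M$. Modulo twisting, they are determined by the class of $L\otimes M^{-1}$, which has degree $n$ large, together with the fixed determinant class. Since $\A^2$ is free, $L\oplus M$ trivial off $P$ forces $[L]+[M]\in\langle[P]\rangle$ modulo the determinant normalization. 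So the data reduce to a class $[L]$ in $\mathrm{Pic}(X)/\langle[P]\rangle\cong \mathrm{Pic}(\A)$, plus an integer $n$ tending to infinity.

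I then check the local structure. For $n$ large the vertex $L\oplus M$ has exactly one neighbor, $L\oplus M(-P)$ up to twist, that decreases $n$. All its other neighbors map to a single vertex of type $L(P)\oplus M$, increasing $n$. This holds because the stabilizer $\mathrm{Aut}(E)$ contains $\mathrm{Hom}(M,L)$, which has dimension larger than $\deg P$. It therefore acts transitively on the $q^{\deg P}$ "upward" lines of the fiber at $P$, in the sense of the projective line over the residue field, and fixes the downward one. This yields, for each class in $\mathrm{Pic}(\A)$, a ray, that is, a cusp. The remaining vertices satisfy $2d-\deg E\le 2g-2$ up to twist, which bounds $n$, so they form the subgraph $Y$. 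Finite diameter follows because any vertex of $Y$ can be joined to a cusp root by a path of bounded length, obtained by raising $d$ through elementary modifications. The complement $Y$ is connected because $\mathcal{T}$ is connected and removing the ray tails does not disconnect it. When $k$ is finite, there are only finitely many bundles of bounded instability with fixed determinant, so $Y$ is finite.

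The main obstacle is the transitivity and stabilizer computation for large $n$, namely showing that the cusps are genuinely rays, not trees, and that distinct Picard classes give distinct rays. This requires tracking the $\mathrm{SL}_2$ versus $\mathrm{GL}_2$ difference through the determinant normalization. I would handle it by working with explicit matrices: $\bigl(\begin{smallmatrix}1&f\\0&1\end{smallmatrix}\bigr)$ with $f$ ranging over $H^0(L\otimes M^{-1})$ restricted to $X\smallsetminus\{P\}$. Surjectivity onto the residue field at $P$ for large $n$ comes from Riemann–Roch.
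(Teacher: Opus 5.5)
The paper gives no proof of this statement; it is quoted from Serre. Your outline (bundles trivial off $P$, the invariant $2d-\deg E$, splitting via $H^1(L\otimes M^{-1})=0$, unipotents from $\mathrm{Hom}(M,L)$ acting on the fibre at $P$) is essentially Serre's own argument. Two steps, however, do not work as written.

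\textbf{The local picture on a cusp is reversed.} Up to twisting by $\oo_X(P)$, the bundle $L(P)\oplus M$ \emph{is} $L\oplus M(-P)$. So your ``unique'' neighbour and your ``other'' neighbours are the same vertex of the quotient. The neighbours of $E=L\oplus M$ are the kernels of $E\to (E\otimes k(P))/\ell$, for lines $\ell\subset (L\otimes k(P))\oplus(M\otimes k(P))$.
\begin{itemize}
\item The automorphisms $\bigl(\begin{smallmatrix}1&f\\0&1\end{smallmatrix}\bigr)$, $f\in H^0(L\otimes M^{-1})$, fix $\ell=L\otimes k(P)$. Its kernel is $L\oplus M(-P)$, with invariant $n+\deg P$, which is the next vertex \emph{outward}.
\item On the remaining lines (graphs of $\lambda\colon M\otimes k(P)\to L\otimes k(P)$) these automorphisms act by $\lambda\mapsto\lambda+f(P)$. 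The action is transitive once $H^0(L\otimes M^{-1})\to k(P)$ is onto, i.e.\ for $n>2g-2+\deg P$. Having $\dim H^0>\deg P$ is not the right criterion.
\item That orbit contains $\ell=M\otimes k(P)$. Hence all those neighbours are isomorphic to $L(-P)\oplus M$, with invariant $n-\deg P$: they point \emph{inward}, toward $Y$.
\end{itemize}
The ray conclusion survives after swapping the directions. The cusp threshold should be $n>2g-2+\deg P$, so that the inward neighbour is still split. The $\mathrm{SL}_2$ versus $\mathrm{GL}_2$ issue you worry about is harmless in this region, since $\mathrm{diag}(u,1)\in\mathrm{Aut}(L\oplus M)$ has arbitrary determinant $u\in k^*$.

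\textbf{Finite diameter is not proved when $k$ is infinite.} Knowing that every vertex of $Y$ is within bounded distance of \emph{some} cusp root gives nothing if there are infinitely many cusps. An infinite comb (a line with a ray attached at every vertex) has that property and infinite diameter. For infinite $k$, $\mathrm{Pic}(\A)$ can indeed be infinite, e.g.\ an elliptic curve of positive rank. You need the roots to be uniformly close to each other, for instance a fixed vertex within bounded distance of every cusp.

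One way to get this: given $E$ and $\tau\in\mathrm{Pic}(\A)$, choose $L'$ in the coset $\tau$ with $\deg L'$ in a window of length $\deg P$ just below $\deg E/2-C$. Here $C$ depends only on $g$ and the upper bound for $N$ on $Y$. Then $E\otimes L'^{-1}$ is generated by global sections at all geometric points. Since $k$ is infinite, a dimension count on the incidence variety gives a nowhere vanishing $k$-rational section, so $L'$ is a \emph{subbundle} of $E$. Raising along $L'$ a bounded number of times, independent of $\tau$, then lands in the cusp $\tau$. For finite $k$ your argument is fine, since $Y$ is connected and there are only finitely many cusps.
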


This result has several important consequences on arithmetic subgroups of $\mathrm{SL}_2$. 
For example, by applying Bass-Serre theory  (cf.~\cite[Ch. I, \S 5]{SerreTrees}) on the quotient graphs described in Th.~\ref{teo serre quot}, Serre shows that $\mathrm{SL}_2(\A)$ can be expressed as an amalgamated product of simpler subgroups (cf.~\cite[Ch. II, \S 2, Th. 10]{SerreTrees}).
This structural description has significant consequences for the (co)homology of 
$\mathrm{SL}_2(\A)$ and its finite-index subgroups (cf.~\cite[Ch. II, \S 2.8-2.9]{SerreTrees}).

%Using this results, Serre describes in ?? presentations of  $\mathrm{SL}_2(\A)$, with significant consequences for the (co)homology of $\mathrm{SL}_2(\A)$ and its finite-index subgroups.
In our previous work, the graph described in 
Th. \ref{teo serre quot} is called 
the S-graph of $\mathbb{M}_2(K)$ at $P$. We have focused 
on a slightly different graph, called the classification graph or C-graph,
whose vertex set corresponds
naturally to some set $\Omega$ of conjugacy classes of 
maximal $X$-orders, i.e., sheaves of orders in $\mathbb{M}_2(K)$ 
that are maximal at any affine open set $U \subset X$. To be precise, 
$\Omega$ is the set of conjugacy classes in one or two spinor genera 
of maximal orders, depending on whether a suitable Frobenius map at $P$ 
is trivial or not. This C-graph is defined as a quotient 
$N \backslash \mathcal{T}$ of the Bruhat-Tits tree by the normalizer of
a suitable affine order, and naturally is covered by the S-graph, which 
is defined as the quotient by the normal subgroup $\Gamma=K^*\mathrm{GL}_2(R)$. 
Furthermore, this construction can be applied to 
each spinor genus or pair of such to obtain a potentially disconnected graph 
that encloses information on all maximal $X$-orders, which we call the full 
C-graph, see \cite{ARENASCARMONA2014258} for details. The C-graph approach offers 
a distinctly geometric framework that has proven valuable in studying orders over 
function fields. Specifically, 
it has been used to analyze the structure of arithmetic groups, 
predominantly Hecke congruence subgroups, as in \cite{BravoHecke}, and 
to describe genera of Eichler orders in terms of simpler genera 
\cite{BravoArenasQH}. The genera of Eichler orders directly relate to
the task of finding a basis that simultaneously splits two vector 
bundles sharing a generic fiber. Furthermore, quotient graphs have 
successfully advanced the study of embedding problems for orders. The 
latter has been mostly done in the number field setting 
\cite{Arenascubic}, but the theory underlying these applications works 
for all global fields. We have a structural result for this quotient in 
terms of split maximal orders, i.e., orders of the form 
$$\DD_B=\bmattrix{\oo_X}{\LL^B}{\LL^{-B}}{\oo_X},$$
where $\LL^{B}(U)=\lbrace f \in K \big{\vert} (\mathrm{div}(f)+B) 
\vert_U \geq 0 \rbrace$, namely:

\begin{MainThm}\cite[Th. 1.2]{ARENASCARMONA2014258}\label{MThB}
In the full C-graph of $\mathbb{M}_2(K)$ at $P$, the vertices 
corresponding  to split maximal orders are located in a finite 
disjoint union of infinite lines or half-lines. The set of such 
lines is in correspondence with the pairs of the form $\lbrace 
a, -a \rbrace$ in $\mathrm{Pic}(\A)$. A split order 
$\DD_{B}$ is in the line corresponding to $a=\bar{B} + 
\langle \bar{P} \rangle$. The half lines correspond to the 
elements $a$ of order $1$ or $2$.
\end{MainThm}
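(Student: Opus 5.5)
The plan is to work directly with the local description of maximal $X$-orders. A maximal $X$-order $\mathfrak{E}$ is determined by its restriction $\mathfrak{E}(U_0)$, with $U_0=X\smallsetminus\{P\}$, and its completion $\mathfrak{E}_P$. The latter is a maximal order of $\mathbb{M}_2(K_P)$, hence a vertex of $\mathcal{T}$. So I fix an affine order $\mathfrak{E}_0=\DD_{B_0}(U_0)$ and identify the maximal $X$-orders extending it with vertices of $\mathcal{T}$. Conjugacy classes of such orders then correspond to vertices of $N\backslash\mathcal{T}$, where $N$ is the normalizer of $\mathfrak{E}_0$ in $\mathrm{PGL}_2(K)$. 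The full C-graph is the disjoint union of these quotients, one for each conjugacy class of affine maximal orders. Affine maximal orders of $\mathbb{M}_2(K)$ over the Dedekind domain $\A$ are of the form $\mathrm{End}_\A(\A\oplus I)$. Their conjugacy classes are indexed by $\mathrm{Pic}(\A)$ modulo squares, up to the spinor-genus bookkeeping already built into the definition of the full C-graph.

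\textbf{Step 1: the split vertices attached to one affine order.} Fix $a=\bar B+\langle\bar P\rangle\in\mathrm{Pic}(\A)$. For every $n\in\ZZ$, the order $\DD_{B+nP}$ has the same affine part as $\DD_B$. Its completion at $P$ is $\bmattrix{\oo_P}{\pi^{-m-n}\oo_P}{\pi^{m+n}\oo_P}{\oo_P}$, where $m$ is the $P$-coefficient of $B$ and $\pi$ is a uniformizer at $P$. These completions are exactly the vertices of the standard apartment $\ap$ of $\mathcal{T}$, namely the line fixed by the diagonal torus. Replacing $B$ by a linearly equivalent divisor conjugates $\DD_B$ by a diagonal matrix in $\mathrm{GL}_2(K)$. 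Swapping the roles of $\LL^B$ and $\LL^{-B}$ (conjugating by $\bmattrix{0}{1}{1}{0}$) identifies $\DD_B$ with $\DD_{-B}$. Hence the split orders whose affine class is $\{a,-a\}$ all come from a single translate of $\ap$ in the component indexed by that affine class.

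\textbf{Step 2: the image of the apartment.} I then show that $\ap$ maps injectively into $N\backslash\mathcal{T}$, except for a folding when $a=-a$. I would compare the invariant $\deg$-type data of $\DD_{B+nP}$ for different $n$. The unordered pair of line bundles $\{\LL^{B+nP},\oo_X\}$ up to a common twist is determined by the class of $B+nP$ modulo $\pm$ in $\mathrm{Pic}(X)$. Two split orders $\DD_{B},\DD_{B'}$ are conjugate if and only if $\oo\oplus\LL^B\cong \mathcal{M}\otimes(\oo\oplus\LL^{B'})$ for some line bundle $\mathcal{M}$. By Krull–Schmidt (Atiyah) for vector bundles on curves, this happens if and only if $B'\sim \pm B$ in $\mathrm{Pic}(X)$. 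So two vertices $n\neq n'$ of $\ap$ are identified if and only if $B+nP\sim-(B+n'P)$, i.e.\ $2B\sim -(n+n')P$. This is possible only if $2a=0$ in $\mathrm{Pic}(\A)$. In that case the identification is a reflection of $\ap$ around one point (a vertex or an edge midpoint), and the image is a half-line; otherwise the image is a full line. I would also check edges: an edge of $\ap$ corresponds to an Eichler order $\DD_B\cap\DD_{B+P}$, and the same bundle argument applies. Distinct pairs $\{a,-a\}$ give disjoint images, because the affine class, or the class modulo $\langle\bar P\rangle$, is an invariant. Finiteness follows since $\mathrm{Pic}(\A)$ modulo the relevant squares is finite for the components considered; when $k$ is finite, $\mathrm{Pic}(\A)$ itself is finite.

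\textbf{Main obstacle.} The delicate step is the conjugacy criterion in Step 2. It requires passing from conjugacy of orders to isomorphism of rank-two bundles up to twist, and then applying Krull–Schmidt. The intermediate claim to prove is that an isomorphism $\DD_B\cong\DD_{B'}$ arises from $\mathrm{End}(\mathcal{E})\cong\mathrm{End}(\mathcal{E}')$, which forces $\mathcal{E}'\cong\mathcal{E}\otimes\mathcal{M}$ by Skolem–Noether and the triviality of the Brauer obstruction for $\mathbb{M}_2(K)$. I would also need to check that this is compatible with how the full C-graph indexes its components by spinor genera.
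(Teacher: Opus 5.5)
Your proposal is correct, but its key step, the conjugacy criterion, takes a different route from the paper. The paper does not reprove this statement; it recovers it as the case $n=2$ of Th.~\ref{mainth1} (cf.\ Ex.~\ref{example n=2}). There $S_2$ acts on $\nabla_{\mathrm{Pic}(\A)}\cong\mathrm{Pic}(\A)$ by $a\mapsto -a$, so $H=S_2$ exactly when $2a=0$, and the image of the apartment is $H\backslash\Sigma$.

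\textbf{The paper's route.} The criterion that split orders are conjugate only if $\BB'\sim\sigma\cdot\BB+B_0\vec\delta$ is Prop.~\ref{prop_canonical}. It is proved inside the ring of global sections $\DD(X)$, using its Jacobson radical, the semisimple quotient $\bigoplus_d\mathbb{M}_{n_d}(k)$, idempotent lifting and Skolem--Noether. As a by-product it shows that every conjugating matrix is a permutation times a parabolic element. Prop.~\ref{prop H N action on simplices} needs exactly that extra information to exclude spurious identifications of higher-dimensional simplices.

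\textbf{Your route.} You use $g\DD_\Lambda g^{-1}=\DD_{g\Lambda}$ together with the fact, recorded in \S\ref{section conventions}, that $\DD_\Lambda=\DD_{\Lambda'}$ iff $\Lambda'=\LL\Lambda$; so no Skolem--Noether or Brauer argument is needed. You then apply Krull--Schmidt for coherent sheaves on the projective curve $X$, which holds over any $k$ because Hom spaces are finite dimensional and line bundles have endomorphism ring $k$. This is shorter and more conceptual, and it gives the vertex-level criterion for every matrix size. On its own, however, it controls only vertices.

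\textbf{Why vertices suffice for $n=2$.} Write $j$ for your apartment index to avoid a clash with the matrix size. In a tree, once the vertex identifications on $\Sigma$ are known to be $j\sim c-j$, where $2B+cP\sim 0$, any element of $N$ carrying an edge of $\Sigma$ to an edge of $\Sigma$ can only act on it as the identity or as the reflection does. What remains is to realize the reflection by a single element of $N$. Rather than a Krull--Schmidt argument for Eichler orders, exhibit it directly. The matrix $\bmattrix{0}{f}{1}{0}$ with $\mathrm{div}(f)=-(2B+cP)$ sends $\LL^{B+jP}\oplus\oo_X$ to $\LL^{B+jP}\otimes\big(\LL^{B+(c-j)P}\oplus\oo_X\big)$ and normalizes $D$; this is the $n=2$ case of Lemma~\ref{lemma action of H}. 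It guarantees that the edges $\{j,j+1\}$ and $\{c-j-1,c-j\}$ are identified, and that the middle edge is flipped when $c$ is odd. Hence the image is a half-line rather than a doubled path.

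\textbf{Finiteness.} The number of lines equals the number of pairs $\{a,-a\}$ in $\mathrm{Pic}(\A)$. So finiteness requires $\mathrm{Pic}(\A)$ itself to be finite, e.g.\ $k$ finite; finiteness of $\mathrm{Pic}(\A)/\mathrm{Pic}(\A)^2$ is not the relevant condition.
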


Note that almost all the previous results are specific to arithmetic subgroups of 
$\mathrm{SL}_2$, $\mathrm{GL}_2$ or even $\mathrm{PGL}_2$.
Thus, it is natural to ask whether or not, or how, the preceding result extends 
to higher dimensional algebraic groups, such as $\mathrm{SL}_n$, $\mathrm{GL}_n$ 
or even algebraic groups such as $\mathrm{SO}_n$, $\mathrm{Sp}_{2n}$ or 
$\mathrm{SU}_n$. In the sequel, given a variety $\mathtt{H}$ defined over 
a ring $R'$, we denote by $\mathtt{H}_{R''}=\mathtt{H} \otimes_{R'} R''$ its 
scalar extension defined by a ring homomorphism $R'\to R''$.
We say that $\mathtt{G}_K$ is a $K$-split group when it contains a maximal 
$K$-torus that is isomorphic to $\mathbb{G}_m^{r}$, 
where $\mathbb{G}_m$ denotes the multiplicative group $\mathrm{GL}_1$.
Examples of such groups $\mathtt{G}_K$ are $\mathrm{GL}_n$, 
$\mathrm{SL}_{n}$, $\mathrm{PGL}_n$, $\mathrm{Sp}_{2n}$, 
$\mathrm{SO}_n$, $\mathrm{Spin}_n$, among others.
Any reductive $K$-split group $\mathtt{G}_K$ has a $\mathbb{Z}$-model 
$\mathtt{G}=\mathtt{G}_{\mathbb{Z}}$ \cite[Exp.~XXV~1.3]{SGA3-3}, 
i.e., a $\mathbb{Z}$-group scheme such that 
$\mathtt{G}\otimes_{\mathbb{Z}} K=\mathtt{G}_K$. Such a group (scheme) 
is called the Chevalley model of $\mathtt{G}_K$.
In particular, $\mathtt{G}(\A)$ can be defined as the group of 
$\A$-points of $\mathtt{G}_{\mathbb{Z}}$.

An $(n-1)$-dimensional building is a cellular complex defined by gluing multiple 
copies of a complex $\mathcal{A}$, called an apartment, whose underlying 
topological space $\mathbf{A}$, or realization, is homeomorphic to affine space, 
i.e., it is canonically homeomorphic to $\mathbb{R}^{n-1}$ once a base-point 
$o\in\mathbf{A}$, called the origin, is chosen. This endows $\mathbf{A}$
with a vector space structure. The analogous role to that 
of a ray in a tree is played by sector chambers, as defined in \cite[\S 1.4]{AbramenkoBrown}. 
See Fig. \ref{f2}. A sector chamber $\mathbf{Q}\subseteq\mathbf{A}$ 
is defined here as a shift $\mathbf{Q}=x+\mathbf{S}$ of a suitable open fundamental 
region $\mathbf{S}$ for the natural action of the permutation group $S_n$ on 
$\mathbf{A}$ fixing the base point $o$. The point $x$ is called the tip of the 
sector. When $x$ is a vertex,  the closure $\overline{\mathbf{Q}}$ of $\mathbf{Q}$ 
in $\mathbf{A}$ is indeed the realization of a complex called a closed sector 
chamber. Note that \cite{AbramenkoBrown} assumes that $o$ itself is a vertex,
but we prefer to keep it general since it is needed later.
When $X=\mathbb{P}^1_{k}$ and $\deg(P)=1$, i.e., 
$\A \cong k[t]$, and $\mathtt{G}$ is a Chevalley model of a 
semisimple (s.s.) simply connected (s.c.) $K$-split group, 
Soul\'e describes the corresponding quotient as follows:

\begin{MainThm}
\cite[\S 1, Th. 1]{So}\label{teo soule quot}
In the preceding hypotheses, the (quotient) complex $\mathtt{G}(\A)
\backslash \mathcal{B}(\mathtt{G}_{K_P})$ is isomorphic to a closed sector chamber of $\mathcal{B}(\mathtt{G}_{K_P})$.
\end{MainThm}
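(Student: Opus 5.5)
The plan is to follow Soulé's strategy, which generalizes Serre's argument for $\mathrm{SL}_2(k[t])$: show that the standard closed sector chamber $\overline{\mathbf{Q}}$, with tip the standard vertex $v_0$ fixed by $\mathtt{G}(\oo_P)$ and lying in the standard apartment $\mathcal{A}$ attached to the diagonal split torus $\mathtt{T}$, is a strict fundamental domain. This means two things: every vertex (indeed every cell) is $\mathtt{G}(\A)$-equivalent to a cell of $\overline{\mathbf{Q}}$, and no two distinct cells of $\overline{\mathbf{Q}}$ are equivalent. Here $\A=k[t]$ with $t^{-1}$ a uniformizer at $P=\infty$. Let $\mathtt{B}=\mathtt{T}\mathtt{U}$ be a Borel subgroup chosen so that the positive root subgroups $\mathtt{U}_\alpha$ point "into" $\mathbf{Q}$. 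For $x\in\mathcal{A}$, the stabilizer of $x$ in $\mathtt{U}_\alpha(K_P)$ is $\mathtt{U}_\alpha$ of the functions of valuation $\geq -\langle\alpha,x\rangle$. Intersecting with $k[t]$ gives the polynomials of degree $\leq\langle\alpha,x\rangle$.

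\textbf{Step 1: surjectivity onto $\overline{\mathbf{Q}}$.} Since $\mathtt{G}(K_P)=\mathtt{G}(\oo_P)\,\mathtt{T}(K_P)\,\mathtt{G}(\oo_P)$, every vertex lies in some apartment through $v_0$. A cleaner route uses the Iwasawa decomposition $\mathtt{G}(K_P)=\mathtt{U}(K_P)\mathtt{T}(K_P)\mathtt{G}(\oo_P)$ together with the additive splitting $K_P=k[t]\oplus t^{-1}k[[t^{-1}]]$. I will show by induction on the height of roots, using a Chevalley ordering of the positive roots and the commutator relations, that $\mathtt{U}(K_P)=\mathtt{U}(k[t])\cdot\mathtt{U}(t^{-1}k[[t^{-1}]])$. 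The second factor fixes every point of $\overline{\mathbf{Q}}$ (equivalently, of the negative chamber side, depending on conventions), so each vertex is $\mathtt{U}(k[t])$-equivalent to a point of $\mathcal{A}$. The finite Weyl group $N(k)/\mathtt{T}(k)\subset\mathtt{G}(k)\subset\mathtt{G}(\A)$ then moves that point into $\overline{\mathbf{Q}}$. Since $\mathtt{G}$ is simply connected, the affine translations in $\mathtt{T}(K_P)$ modulo $\mathtt{T}(\oo_P)$ give the full coroot lattice. This takes care of the orbits of $\mathtt{T}(K_P)v_0$; the remaining vertices of other types are handled by the same argument applied to cells, or by passing to the barycentric subdivision.

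\textbf{Step 2: injectivity.} Suppose $x,y\in\overline{\mathbf{Q}}$ and $gx=y$ with $g\in\mathtt{G}(\A)$. I will use the Bruhat decomposition of $\mathtt{G}(K)$ relative to $\mathtt{B}$ and the description of the stabilizer $\Gamma_x=\mathrm{Stab}_{\mathtt{G}(\A)}(x)$ for $x\in\overline{\mathbf{Q}}$. That stabilizer is generated by $\mathtt{T}(k)$ and by $\mathtt{U}_\alpha(\{f:\deg f\le\langle\alpha,x\rangle\})$ for positive $\alpha$, together with $\mathtt{U}_{-\alpha}(k)$ when $\langle\alpha,x\rangle=0$; in other words $\Gamma_x=P_x(k)\ltimes U_x$, a parabolic of $\mathtt{G}(k)$ extended by unipotent polynomials. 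The aim is to show that $x$ and $y$ have the same "type" in a strong sense, namely that the vector $\big(\langle\alpha,x\rangle\big)_\alpha$ is an invariant of the orbit.

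\textbf{Main obstacle.} Step 2 is where I expect the real difficulty. The approach I will try first is a retraction argument. Consider the map $\rho:\mathcal{B}\to\overline{\mathbf{Q}}$ obtained by composing the Step 1 procedure with the retraction onto $\mathcal{A}$ centered at the chamber at infinity of $-\mathbf{Q}$, followed by folding by the Weyl group. I will check that $\rho$ is well defined and $\mathtt{G}(\A)$-invariant, using the fact that $\mathtt{U}(k[t])$ acts by such retractions-preserving moves and $\mathtt{G}(k)$ preserves the vertex $v_0$. If that fails, the fallback is numerical invariants: at a vertex $x$, compute $\dim_k$ of the space of $\mathtt{U}(k[t])$-fixed polynomial data, i.e. the dimensions $\dim_k H^0$ of the vector bundles $\oplus\,\oo(\langle\alpha,x\rangle)$ on $\mathbb{P}^1$. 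This encodes the Grothendieck splitting type, which is unique, so these dimensions separate the points of $\overline{\mathbf{Q}}$. The invariance of the Grothendieck–Harder splitting type of $\mathtt{G}$-bundles on $\mathbb{P}^1$ is exactly what is needed, since $\mathtt{G}(\A)\backslash\mathtt{G}(K_P)/\mathtt{G}(\oo_P)$ classifies $\mathtt{G}$-bundles on $\mathbb{P}^1$ trivial on $\mathbb{A}^1$.
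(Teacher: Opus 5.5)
The paper gives no proof of this statement; it is quoted from Soul\'e as background. Your proposal therefore has to stand on its own, and as written both steps have genuine gaps.

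\textbf{Step 1.} The factorization $\mathtt{U}(K_P)=\mathtt{U}(k[t])\,\mathtt{U}(t^{-1}k[[t^{-1}]])$ and your stabilizer description are fine. The conclusion you draw from them, that every vertex is $\mathtt{U}(k[t])$-equivalent to a point of $\mathcal{A}$, does not follow, and it is in fact false.

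From $y=u_1u_2\tau v_0$ you only get $u_1^{-1}y=u_2\tau v_0$. The group $\mathtt{U}(t^{-1}k[[t^{-1}]])$ fixes only the points of $\mathcal{A}$ near $\overline{\mathbf{Q}}$, and the Iwasawa decomposition gives no control over where $\tau v_0$ lies.

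Here is a concrete counterexample. For $\mathrm{SL}_2$, put $s=t^{-1}$ and $L_m=\mathcal{O}_Pe_1+s^m\mathcal{O}_Pe_2$. Then $\overline{\mathbf{Q}}=\{L_m\}_{m\ge 0}$ and, up to twist, $L_m\leftrightarrow\mathcal{O}\oplus\mathcal{O}(m)$. Let $x_{12}(f)$ be the upper unipotent matrix.
\begin{itemize}
\item One checks that $x_{12}(f)L_m\in\mathcal{A}$ if and only if $\nu_P(f)\ge -m$.
\item For $y_g=x_{12}(g)L_{-6}$ with $\nu_P(g)=1$, the orbit $\mathtt{U}(k[t])y_g=\{x_{12}(p+g)L_{-6}\}$ therefore never meets $\mathcal{A}$, since $\nu_P(p+g)\le 1$ for every $p\in k[t]$.
\end{itemize}
So surjectivity still needs a genuine descent. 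One route is Serre's argument:
\begin{itemize}
\item Show that for every vertex $x$ of $\overline{\mathbf{Q}}$, each simplex containing $x$ is $\Gamma_x$-equivalent to a simplex of $\overline{\mathbf{Q}}$. This is a Bruhat-decomposition computation for the action of $\Gamma_x=P_x(k)\ltimes U_x$ on the link, in the spirit of Lemma~\ref{enoghcp}.
\item Conclude by connectedness of $\mathcal{B}$.
\end{itemize}

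\textbf{Step 2.} The retraction-based map $\rho$ is not $\mathtt{G}(k[t])$-invariant, even in rank one. A retraction from a chamber at infinity is invariant under the corresponding unipotent group (and $\mathtt{T}(K_P)$-equivariant), not under $\mathtt{G}(k)$. Neither precomposing with Step 1 nor folding by $W$ repairs this.

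In the example above, take $y_1=x_{12}(s)L_{-6}$ and $y_2=x_{12}(s+s^3)L_{-6}$.
\begin{itemize}
\item Both are left unchanged by your Step 1 procedure.
\item Both retract to $L_4$ from the end of $-\mathbf{Q}$, and to $L_{-6}$ from the other end.
\item Computing $h^0(E(-j))$ shows they correspond to $\mathcal{O}(1)\oplus\mathcal{O}(5)$ and $\mathcal{O}(3)\oplus\mathcal{O}(3)$ respectively. Hence $y_1\sim L_4$ but $y_2\sim L_0$.
\end{itemize}

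Your numerical fallback also fails as stated. The bundle $\bigoplus_\alpha\mathcal{O}(\langle\alpha,x\rangle)$ is the adjoint bundle, and since $\Phi=-\Phi$ it cannot distinguish $\lambda$ from $-w_0\lambda$. For $\mathrm{SL}_3$, the bundle $\mathcal{O}(2)\oplus\mathcal{O}(-1)^2$ and its dual have isomorphic adjoint bundles, yet they give distinct vertices of $\overline{\mathbf{Q}}$. You need a faithful representation, for instance $h^0(E(m))$ for the standard representation when $\mathtt{G}=\mathrm{SL}_n$.

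Finally, even quoting the Grothendieck--Harder classification outright only describes $\mathtt{G}(k[t])\backslash\mathtt{G}(K_P)/\mathtt{G}(\mathcal{O}_P)$, that is, the vertices in the orbit of $v_0$.
\begin{itemize}
\item The remaining vertices and the higher cells have other parahoric stabilizers $P_F$; the non-special vertices for $G_2$ or $E_8$ are examples.
\item For these you need $\mathtt{G}(k[t])\backslash\mathtt{G}(K_P)/P_F$. This is not a classification of $\mathtt{G}$-bundles, and passing to the barycentric subdivision does not produce it.
\end{itemize}
Since Soul\'e's statement is simplicial, this is exactly the part that must be proved. One invariant that does work for all cells is the codistance, in the twin building of $\mathtt{G}(k[t,t^{-1}])$, to the special vertex of the negative building whose stabilizer in $\mathtt{G}(k[t,t^{-1}])$ is $\mathtt{G}(k[t])$. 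This amounts to using the Birkhoff decomposition.
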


In the same work, Soul\'e also describes $\mathtt{G}(\A)$ as an amalgamated product of well-understood subgroups (cf.~\cite[\S 2, Th. 3]{So}) and studies its homology groups over suitable modules of coefficients.
Th. \ref{teo soule quot} has been extended by Margaux in \cite{Margaux} to the context of isotrivial s.c. s.s. $k$-groups, which are s.c. s.s. groups defined over $k$ and therefore splitting over some finite extensions of the ground field $k$.
Soul\'e's theorem has also important applications to $K$-theory. In particular, Knudson and Wendt 
\cite{Knudson1, Knudson2, KnudsonBook, Wendt} used related techniques to address 
the homotopy invariance question for the homology of Chevalley groups over polynomial rings.

Let $S$ be a finite set of closed points of $X$ and let $\tilde{\mathtt{G}}_K$ be an arbitrary linear $K$-group, meaning that
there exists a $K$-representation $\rho: \tilde{\mathtt{G}}_K \to 
\mathrm{SL}_{n}$, for some $n>0$.
Then, we can define $\tilde{\mathtt{G}}_K(\mathcal{O}_{S})$ as 
$\tilde{\mathtt{G}}_K(K) \cap \rho^{-1}
\big( \mathrm{SL}_n(\mathcal{O}_{S}) \big)$.
The group so defined depends on the representation $\rho$.
For instance, for $\tilde{\mathtt{G}}_K=\mathrm{SL}_n$, we assume that 
$\rho$ is the identity, while, for 
$\tilde{\mathtt{G}}_K=\mathrm{PSL}_n$, we make it act on 
the matrix algebra
$\mathbb{M}_{n,K}$ via conjugation.
An $S$-arithmetic group $G$ of $\tilde{\mathtt{G}}_K$ 
is a subgroup of $\tilde{\mathtt{G}}_K(K)$ that is commensurable 
with $\tilde{\mathtt{G}}_K(\mathcal{O}_{S})$.
In order to study these groups, we can use their diagonal action on 
the product of Bruhat-Tits buildings 
$\mathcal{B}_{S}:=\prod_{P \in S} \mathcal{B}
(\tilde{\mathtt{G}}_{K_P})$.
Quotients of the form $\mathrm{SL}_2(\mathcal{O}_{S}) 
\backslash \mathcal{B}_{S}$ have been described in 
\cite[Prop. 1]{Stuhler}.
Assuming that $k$ is a finite field and $\mathtt{G}$ is an 
arbitrary isotropic $K$-group, Bux, Köhl and Witzel describe some 
finiteness properties of the (quotient) space $G \backslash 
\mathcal{B}_{S}$ by exhibiting the following cover:

\begin{MainThm}\cite[Prop 13.6]{B}\label{teo cover witzel}
Assume that $k$ is finite. Let $\mathtt{G}$, $G$ and $\mathcal{B}_{S}$ 
be as above.
Then, there exists a constant $\kappa$ and finitely many sector 
chambers $\mathbf{Q}_1, \cdots, \mathbf{Q}_s$ of $\mathcal{B}_{S}$ 
such that:
\begin{itemize}
\item the $G$-translates of the $\kappa$-neighborhood of 
$\bigcup_{i=1}^{s} \mathbf{Q}_i$ cover $\mathcal{B}_{S}$, and
\item for $i\neq j$, the $G$-orbits of $\mathbf{Q}_i$ and 
$\mathbf{Q}_j$ are disjoint.
\end{itemize} 
\end{MainThm}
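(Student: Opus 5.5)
This statement is cited from Bux–Köhl–Witzel rather than proved in the paper, so the following is the reduction-theoretic argument I would reconstruct, not a reading of their proof. It is a Harder/Behr-type reduction theory on the product of buildings $\mathcal{B}_S$, built from the $G(K)$-conjugacy classes of minimal parabolic $K$-subgroups.

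First, I reduce to the case $G=\tilde{\mathtt{G}}_K(\mathcal{O}_S)$, because commensurable groups change both the constant $\kappa$ and the number of sectors only by a finite amount. I fix a maximal $K$-split torus $\mathtt{T}$ and a minimal parabolic $\mathtt{B}\supseteq\mathtt{T}$. For each $P\in S$, $\mathtt{T}$ gives an apartment $\mathcal{A}_P$ of $\mathcal{B}(\tilde{\mathtt{G}}_{K_P})$, and $\mathtt{B}$ picks out a Weyl chamber direction in it. The product of these chambers is a sector chamber $\mathbf{Q}$ of $\mathcal{B}_S$ whose tip is a chosen base point.

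The parabolics enter through two finiteness facts. The set $G\backslash \tilde{\mathtt{G}}(K)/\mathtt{B}(K)$ is finite: it is a class-number statement, using finiteness of the class number of $\mathtt{T}$ and the Picard group over finite $k$. Choosing representatives $g_1,\dots,g_s$ gives the sectors $\mathbf{Q}_i=g_i\mathbf{Q}$ (with tips readjusted), one for each cusp class.

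Next comes the core Harder reduction estimate. For any point $x\in\mathcal{B}_S$, I would use the adelic degree or height functions attached to the fundamental weights. Concretely, I take the Busemann-type functions $\deg_{\mathtt{B}'}(x)$ for rational parabolics $\mathtt{B}'$; these generalize the degree of a line subbundle in the $\mathrm{SL}_2$ case of Th.~\ref{teo serre quot}. Three steps follow:
\begin{enumerate}
\item Show that for each $x$ there is a rational minimal parabolic maximizing the degree; this uses a Harder–Narasimhan/Mahler-compactness argument and the finiteness of $k$.
\item Conjugate that parabolic by some $\gamma\in G$ to one of the $g_i\mathtt{B}g_i^{-1}$.
\item Prove that $\gamma x$ then lies within distance $\kappa$ of $\mathbf{Q}_i$. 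The constant $\kappa$ comes from a bound on the "semistable part", which is compact modulo the unipotent radical's $S$-arithmetic lattice, and from the Riemann–Roch/genus bound on cohomology of line bundles.
\end{enumerate}
This gives the covering statement.

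For disjointness of the orbits when $i\neq j$, I argue that a point deep enough inside $\mathbf{Q}_i$ has a \emph{unique} degree-maximizing rational parabolic, namely $g_i\mathtt{B}g_i^{-1}$, up to $G$. So if $\gamma\mathbf{Q}_i$ met $\mathbf{Q}_j$, the two parabolics would be $G$-conjugate, forcing $i=j$. To get genuine disjointness rather than only far out, I shrink each sector by translating its tip deeper. This is allowed, since it only enlarges $\kappa$. Faces where a partial degree vanishes need care, so I take open sectors and separate them using a strict inequality between degrees along a whole family of walls.

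I expect the main obstacle to be the uniform bound $\kappa$ in the covering step, namely controlling the distance to the sector uniformly in $x$. My first attempt would be the $\mathrm{SL}_n$ case via vector bundles: a point of the building corresponds to a rank-$n$ bundle, and the Harder–Narasimhan filtration plus splitting of bundles with large slope gaps shows the bundle is within bounded distance of a sum of line bundles. I would then pass to general isotropic $\mathtt{G}$ via a faithful representation.
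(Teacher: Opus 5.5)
The paper does not prove this statement. It quotes it from Bux--K\"ohl--Witzel \cite{B}, who derive it from Harder's reduction theory over global function fields, so your outline follows the source's strategy. The trouble is that the two facts carrying all the content are asserted, not proved:
(i) a uniform reduction theorem: every point of $\mathcal{B}_S$ is, after moving by $G$, within a fixed distance of one of finitely many fixed sectors;
(ii) uniqueness of the reducing rational parabolic for points all of whose simple-root coordinates are large.

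For (i), your proposed route breaks at the last step. Suppose you prove the $\mathrm{SL}_n$ case with vector bundles and then pass through a faithful representation $\rho$. The conclusion for $\rho(x)$ involves an element of $\mathrm{SL}_n(\mathcal{O}_S)$ and a rational parabolic of $\mathrm{SL}_n$. Neither need come from $G$ or from a $K$-parabolic of $\tilde{\mathtt{G}}_K$, so nothing descends to $x$. The standard argument is intrinsic to the group: it uses height functions attached to representations with a rational highest-weight line stabilized by each maximal $K$-parabolic, together with the $K$-Bruhat decomposition.

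Step 3 is also incomplete. Maximizing degrees only puts $\gamma x$ near \emph{some} sector with the same chamber at infinity as $g_i\mathtt{B}g_i^{-1}$, and there are infinitely many such sectors. You must still move by $G\cap g_i\mathtt{B}(K)g_i^{-1}$. Your remark on the unipotent lattice handles the horospherical directions but not the Levi directions. For $|S|\geq 2$ this matters: a rational degree only controls a weighted sum $\sum_{P\in S}\deg(P)\,h_P(x_P)$ of local Busemann functions $h_P$. Only the $S$-unit theorem lets you push a reduced point into the product sector.

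Two further steps fail as written. First, the commensurability reduction does not preserve the disjointness clause. Take $G=\mathrm{SL}_2(k[t])$ and let $G'$ be the kernel of reduction modulo $t$. The matrices $\bigl(\begin{smallmatrix}1&b\\0&1\end{smallmatrix}\bigr)$ with $b\in k^*$ lie in distinct cosets of $G'$. Yet they fix pointwise the ray of vertices $[\mathcal{O}_\infty e_1\oplus t^{-m}\mathcal{O}_\infty e_2]$, $m\geq 0$. So the naive list of translates $g_j\mathbf{Q}$ over $G'\backslash G$ has repeated $G'$-orbits. The correct list is indexed by $G'\backslash G/(G\cap g_i\mathtt{B}(K)g_i^{-1})$. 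In the other direction, passing to a finite-index overgroup can make orbits overlap only partially. Both directions need (ii) again. It is simpler to skip the reduction altogether, because $G\backslash\tilde{\mathtt{G}}(K)/\mathtt{B}(K)$ is finite for every $S$-arithmetic $G$.

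Second, for a general isotropic group a maximal $K$-split torus $\mathtt{T}$ need not remain maximal split over $K_P$. It then spans only a flat of dimension $\mathrm{rk}_K$ in $\mathcal{B}(\tilde{\mathtt{G}}_{K_P})$, not an apartment. The minimal parabolic is then not a Borel subgroup, and the class-number input concerns $Z(\mathtt{T})$ rather than $\mathtt{T}$. It yields a lower-dimensional rational sector, not a sector chamber. So your construction works as written only when the $K$-rank equals every $K_P$-rank, e.g. for the $K$-split groups relevant to this paper.

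Once (ii) is available, your disjointness argument is fine. After moving the tips deep, every point of each closed sector has all simple-root coordinates large, so the walls need no separate care. Self-normalization of parabolics then puts $g_i$ and $g_j$ in the same double coset.
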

Above, the $\kappa$-neighborhood of a set $\mathbf{X}$ is the
union of all simplices $\mathbf{T}$ for which there is a sequence
$\mathbf{T}=\mathbf{T}_0,\dots,\mathbf{T}_n$ of simplices, where
consecutive simplices share a vertex, and $\mathbf{T}_n\cap 
\mathbf{X}\neq\emptyset$.
More technical coverings in this spirit were previously described by 
Harder in \cite{Harder1}.
Important evidence for Th. \ref{teo cover witzel} was also contributed by
Behr~\cite{Behr}, Abels~\cite{Abels}, Abramenko~\cite{Abramenko}, as well as
Bux and Wortman~\cite{Bux-Wortman}.
Some relevant finiteness properties of $S$-arithmetic subgroups, 
including the homological finiteness length, have been studied by using 
the previously cited results on actions on buildings 
\cite{Abels,Abramenko,Behr,B,Bux-Wortman,Gandini,Stuhler}.
They have important consequences on the (co)homology of the involved 
groups according to~\cite[Prop. 2.15]{Bieri}, \cite{H2}, \cite{Wortman}, 
among other works.

More recently, Kondo and Yasuda \cite{KondoYasuda} obtained further 
descriptions 
of quotients of the form $\mathrm{GL}_n(\A) \backslash\mathcal{B}
(\mathrm{GL}_{n,K_P})$, motivated by the study of automorphic forms for 
$\mathrm{GL}_n(\A)$.
For the case of $\mathrm{PGL}_3$ over a rational function field, some 
explicit descriptions (mainly for $X=\mathbb{P}^1_k$ and $\deg(P)\in 
\lbrace2,3\rbrace$) of 
the quotient graphs $\mathrm{PGL}_3(\A)\backslash \mathcal{X}$ were obtained by 
Christ~\cite{Christ}, where $\mathcal{X}$ denotes the $1$-skeleton of the 
associated Bruhat-Tits building $\mathcal{B}(\mathrm{GL}_{3,K_P})$.
Further examples of quotients of Bruhat-Tits buildings (and trees) for 
arithmetic subgroups of $\mathrm{SL}_n(\A)$ (and $\mathtt{G}(\A)$, for more 
involved algebraic 
groups $\mathtt{G}$) can be found in 
\cite{Bau,Bockle,BravoHecke,BravoHI,Kohl,Mason2,M,
MasonPremetSuryZalesskii,MasonSchw1,MasonSchw2}.

Recall that the star of a simplex $\mathbf{T}$ in a simplicial complex 
$\mathcal{X}$ is the union, in the realization $\mathbf{X}$ of
$\mathcal{X}$, of the interior of every simplex that has $\mathbf{T}$
as a face. This is an open neighborhood of every interior point of
$\mathbf{T}$. In this language,
a precise topological (and combinatorial) description of the 
sector chambers in Th. \ref{teo cover witzel}, called cuspidal chambers,
is given by Theorem \ref{MthE} below. Roughly speaking, 
this result shows that sector chambers have a big piece that does not branch in the quotient.

\begin{MainThm}\cite[Ths.~2.1-2.2]{BravoLoisel}\label{MthE}
Let $\mathtt{G}$ be a Chevalley model of a $K$-split semisimple group
and let $\mathcal{B}=\mathcal{B}(\mathtt{G}_{K_P})$ be the associated 
building at $P$. Let $\mathbf{Q}$ be a sector chamber of $\mathcal{B}$.
Then there exists a subsector chamber $\mathbf{Q}' \subseteq \mathbf{Q}$
such that the star in $\mathcal{B}$ of every simplex in $\mathbf{Q}'$ is covered by the $\mathtt{G}(\A)$-orbit of its star in $\mathbf{Q}$.
Moreover, there exists a subsector chamber $\mathbf{Q}'' \subseteq
\mathbf{Q}$ which embeds into the quotient $\mathtt{G}(\A)\backslash 
\mathcal{B}$.
\end{MainThm}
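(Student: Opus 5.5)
We would prove this in two stages: first the local statement about stars, then the embedding, which follows from the first by a connectedness argument. Fix an apartment $\mathbf{A}$ containing $\mathbf{Q}$. Choose a maximal split torus $\mathtt{T}\subseteq\mathtt{G}$ and a Borel subgroup $\mathtt{B}=\mathtt{T}\ltimes\mathtt{U}$ with the following property: the germ at infinity of $\mathbf{Q}$ is the chamber at infinity fixed by $\mathtt{B}(K_P)$. Since $\mathtt{G}(K)$ acts transitively on pairs (apartment, chamber at infinity) defined over $K$, and $\mathbf{Q}$ can be replaced by a subsector, we may assume these data are defined over $K$. Equivalently, $\mathbf{Q}=x+\mathbf{S}$ with $\mathbf{S}$ the standard Weyl chamber attached to $\mathtt{B}$.

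\textbf{First claim.} The idea is to use the unipotent radical $\mathtt{U}(\A)$. It is a product of root groups $U_\alpha(\A)\cong\A$ over the positive roots $\alpha$. For a point $y\in\mathbf{Q}$, the stabilizer of $y$ in $U_\alpha(K_P)$ is $U_\alpha(\mathfrak{m}_P^{-\lfloor\alpha(y)\rfloor})$, up to normalization. As $y$ goes deep into $\mathbf{Q}$, we have $\alpha(y)\to\infty$ for every positive root. Riemann–Roch then implies that the Riemann–Roch space $\LL^{mP}(X)\subseteq\A$ surjects onto $\mathfrak{m}_P^{-m}/\mathfrak{m}_P^{-m+1}$ once $m\geq 2g-1+\deg P$, or a similar explicit bound. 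So for $y$ in a subsector $\mathbf{Q}'$ (concretely, $\alpha(y)\geq c$ for all simple roots $\alpha$, with $c$ depending only on the genus and $\deg P$), the group $\mathtt{U}(\A)\cap\mathrm{Stab}(y)$ surjects onto the finite quotients of the parahoric that permute the chambers containing a simplex $\mathbf{T}\ni y$.

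Concretely, the chambers of the star of $\mathbf{T}$ correspond to the chambers of the spherical building of the reductive quotient at $\mathbf{T}$. The chambers of the star lying in $\mathbf{A}$ but not in $\mathbf{Q}$ occur only near the walls of $\mathbf{Q}$; by shrinking to $\mathbf{Q}'$ we avoid them, since every chamber of $\mathbf{A}$ adjacent to $\mathbf{T}$ then lies in $\mathbf{Q}$. Every chamber of the star of $\mathbf{T}$ in $\mathcal{B}$ is obtained from one in $\mathbf{A}$ by an element of the unipotent radical of the corresponding parabolic, that is, by the $U_\alpha$ with $\alpha(y)$ an integer and $\alpha$ negative on the relevant direction. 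The negative root groups act trivially on $\mathbf{Q}'$ near $y$, so only positive root groups are needed, and these are covered by $\mathtt{U}(\A)$ through the surjectivity above. This proves that the star of $\mathbf{T}$ in $\mathcal{B}$ is contained in $\mathtt{U}(\A)\cdot(\text{star of }\mathbf{T}\text{ in }\mathbf{Q})$.

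\textbf{Second claim.} We want $g\in\mathtt{G}(\A)$ and $y,y'\in\mathbf{Q}''$ with $gy=y'$ to force $y=y'$. The plan is an Iwasawa/Bruhat-type argument. Write $g=u\,n\,u'$ via the Bruhat decomposition over $K$, or argue with the retraction $\rho_{\mathbf{Q}}$ from the chamber at infinity. Because $\mathtt{U}(\A)$ preserves the retraction onto $\mathbf{A}$ centered at that chamber, we reduce to $\mathtt{N}(K)\cap\mathtt{G}(\A)$. Modulo torus elements, these are Weyl group representatives, and the torus part $\mathtt{T}(\A)=\A^{*r}=k^{*r}$ acts trivially on $\mathbf{A}$. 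A Weyl element maps a point deep inside $\mathbf{S}$ outside $\mathbf{S}$, unless it is trivial. Consequently, two points of $\mathbf{Q}''$ that are $\mathtt{G}(\A)$-equivalent are equal, provided $\mathbf{Q}''$ is deep enough that the reduction step does not move points by more than a bounded amount.

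\textbf{Main obstacle.} The hard part is controlling this reduction step. One must show that if $gy\in\mathbf{Q}$ with $y$ deep in $\mathbf{Q}$, then $g$ lies in $\mathtt{B}(\A)$ times the stabilizer of $y$, so that $g$ preserves the chamber at infinity. We would attempt this via the Harder-style reduction theory quoted before Theorem \ref{MthE}, which encodes degrees of flags of vector bundles. Deep points of $\mathbf{Q}$ correspond to bundles with a canonical (Harder–Narasimhan) flag whose successive slope gaps exceed $2g-2$. Uniqueness of that flag forces $g$ to stabilize it, hence $g\in\mathtt{B}(K)\cap\mathtt{G}(\A)$. The remaining finite ambiguity is then absorbed by the first claim.
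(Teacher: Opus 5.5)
The paper does not prove this statement; it imports it from \cite{BravoLoisel}. The closest argument in the paper is its analogue for $N\subseteq\mathrm{GL}_n(K)$, which has the same two steps as your plan. For the stars, it combines Riemann--Roch, which puts the residual upper unipotent group into the image of global sections (Lemma~\ref{enoghcpa}), with an orbit count on flags (Lemma~\ref{enoghcp}). For the embedding, it uses a rigidity result forcing identifications between deep split vertices to respect the parabolic (Prop.~\ref{prop_canonical}), which is the $\mathrm{GL}_n$ form of your Harder--Narasimhan step.

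\textbf{First gap: the opening reduction.} Passing to a subsector does not change the chamber at infinity of $\mathbf{Q}$. Also, $\mathtt{G}(K)$ preserves $K$-rationality of chambers at infinity. So nothing lets you ``assume these data are defined over $K$''. This cannot be repaired, because for an irrational direction the conclusion fails. Take $\mathtt{G}=\mathrm{SL}_2$, $X=\mathbb{P}^1$, $\deg P=1$. The quotient is Nagao's ray $v_0,v_1,\dots$ with lift $\Lambda_0,\Lambda_1,\dots$ \cite[Ch.~II]{SerreTrees}, and for $m\geq1$ every lift $\gamma\Lambda_m$ of $v_m$ has exactly one neighbour over $v_{m+1}$, namely $\gamma\Lambda_{m+1}$. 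A ray whose projection is eventually injective must eventually keep climbing. Its tail is then $\gamma\Lambda_m,\gamma\Lambda_{m+1},\dots$, and its end is $\gamma$ applied to the rational end of $\Lambda_0,\Lambda_1,\dots$. Hence no subray of a ray towards $\xi\in\mathbb{P}^1(K_P)\smallsetminus\mathbb{P}^1(K)$ embeds in the quotient. The statement has to be read, as in the source, for rational sector chambers; compare Th.~\ref{mainth1}, where every cuspidal sector lies in a globally split apartment. Rationality must therefore be a hypothesis, not a reduction. With that hypothesis your setup is correct: the $K$-Borel $\mathtt{B}$ fixing the direction contains a $K$-split maximal torus, and its apartment contains a subsector of $\mathbf{Q}$.

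\textbf{Second gap: the embedding.} Your Bruhat reduction does not work. Writing $g=unu'$ over $K$ gives $u,u'\in\mathtt{U}(K)$, not elements of $\mathtt{G}(R)$, and $u'$ need not fix $y$. So the $\mathtt{U}(K_P)$-invariance of the retraction $\rho_c$ does not reduce you to $\mathtt{N}(K)\cap\mathtt{G}(R)$. Everything then rests on what you call the main obstacle: that $g\in\mathtt{G}(R)$ sending a deep point of $\mathbf{Q}$ to a deep point lies in $\mathtt{B}(K)$. At present this is only asserted. The canonical-filtration idea is the right one, but it must be carried out in three steps.
\begin{enumerate}
\item At a deep vertex, the associated bundle is induced from the torus with positive degree on every positive root, so its $\mathtt{B}$-reduction is the canonical one. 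For $\mathrm{GL}_n$ this is the Harder--Narasimhan filtration of $\bigoplus_i\LL^{B_i}$ with strictly decreasing degrees; the bound $2g-2$ plays no role here. For a general Chevalley group you need Behrend's canonical reductions and must work at special vertices.
\item An arbitrary point $y$ is brought to this case with your first claim. Replace $g$ by $u^{-1}g$, with $u\in\mathtt{U}(R)$ fixing $gy$, so that a chamber of $\mathbf{Q}$ through $y$ is sent into $\mathbf{Q}$. Then apply the vertex case to a special vertex of that chamber.
\item Once $g\in\mathtt{B}(K)\cap\mathtt{G}(R)$, no ambiguity remains. Each character of $\mathtt{B}$ sends $g$ to an element of $K^*$ that is a unit at every $Q\neq P$, hence to $R^*=k^*$. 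Writing $g=tu$, the torus part $t$ therefore fixes $\mathbf{A}$ pointwise. Whenever $y,gy\in\mathbf{A}$, this gives $gy=\rho_c(gy)=t\rho_c(uy)=t\rho_c(y)=y$.
\end{enumerate}

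\textbf{Two smaller corrections to the first claim.} First, the identification $U_\alpha(K)\cap\mathtt{G}(R)\cong R$ holds only at the standard cusp. For a general rational sector these intersections are rank-one lattices $\LL^{D_\alpha}(U_P)$; this is where $\mathrm{Pic}(R)$ enters. Riemann--Roch must then be applied to $\LL^{D_\alpha+mP}(X)$, so the depth of $\mathbf{Q}'$ depends on $\mathbf{Q}$, not only on the genus and $\deg P$.

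Second, positive root groups do not suffice because negative ones act trivially near $y$; that claim is false. In your normalization, $x_{-\alpha}(f)$ with $\nu_P(f)=\alpha(y)$ fixes only the half-apartment $\{\alpha\leq\alpha(y)\}$ and moves the chambers at $y$ on the deep side. The correct reason is the Bruhat decomposition $\bar G=\bigsqcup_w\bar U^+\dot w\bar B^+$ of the residual reductive group. The image of $\mathtt{U}(R)\cap\mathrm{Stab}(y)$ only has to contain $\bar U^+$. It does not surject onto the reductive quotient, since its torus part comes only from $\mathtt{T}(k)$; compare the diagonal entries $k$ in Eq.~\eqref{formofD}. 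This is exactly the content of Lemma~\ref{enoghcp}.
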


The preceding result holds even when the ground field $k$ is infinite according to \cite[Th.~2.1]{BravoLoisel}.
A sector chamber like $Q''$, satisfying these 
``flatness'' and embedding conditions, is called cuspidal. 
This definition extends easily to other quotients. 
%See Def. \ref{Definition cusps} for more details.
A precise description of the quotient $\mathtt{G}(\A)\backslash \mathcal{B}(\mathtt{G}_{K_P})$ is given in \cite[Th.~2.4]{BravoLoisel}, 
for general $K$-split semisimple groups.
More 
precisely, the latter result shows that $\mathtt{G}(\A)\backslash \mathcal{B}(\mathtt{G}_{K_P})$ is obtained by 
gluing a connected CW-complex to a family of sector chambers, called cuspidal rational sector chambers, whose $\mathtt{G}(\A)$-orbits 
are pairwise disjoint and each of which embeds homeomorphically into the quotient. 
The set of such chambers is in bijection with $\mathrm{Pic}(\A)^{r}$, where $r$ is the rank of $\mathtt{G}$.
This result simultaneously generalizes Th. 
\ref{teo serre quot} and Th. \ref{teo soule quot} to 
arbitrary Chevalley groups over rings of the form $\A$, 
and refines Th. \ref{teo cover witzel} by providing an 
explicit description of a fundamental domain rather than 
a mere covering. Some progress has also been made beyond the
Chevalley setting, for quasi-split non-split algebraic groups of low rank 
such as $\mathrm{SU}_3$ in \cite{ArenasBravoLoiselLucchini}.

The structure of quotients of Bruhat-Tits buildings, as described
in particular in Th.~\ref{teo serre quot} and Th.~\ref{teo soule quot}, has 
important 
consequences for the study of modular and automorphic forms of Drinfeld type.
Applying these theorems, Gekeler and collaborators developed 
the latter theory, especially in higher rank, in works such as 
\cite{gek1,gek5,gek2,gek6,Gekeler1,gekeler2}.

In the particular case when $\mathtt{G}=\mathrm{GL}_n$ is a
general linear group,
there exists an explicit $\{P\}$-arithmetic group $N$ whose 
associated quotient $N\backslash\mathcal{B}(\mathtt{G}_{K_P})$ 
should morally classify maximal $X$-orders in the matrix 
algebra $\mathbb{M}_n(K)$, in the sense that the quotient complex has
one vertex for each conjugacy class of such orders. 
This is the normalizer in $\mathrm{GL}_n$ of a suitable
$\A$-order. The problem, however, is that the preceding quotient
is not always a simplicial complex, so part of the problem is 
finding a category where it can be properly defined. In the past,
we have indeed introduced some modifications to the category of graph
in which we work through the inclusion of half edges. This gets cumbersome
in higher dimensions, where the equivalents of half edges quickly explode
in number. We solve this in \S \ref{SSsing}
by introducing a suitable new category.

Following the same philosophy as in Th. \ref{MThB} above, 
we describe the cusps of the quotient complex in terms
of some natural subspaces that accept a description in 
terms of quotient of
apartments by some natural permutation group actions. 
For $n=2$, these spaces
are either lines or rays. In higher dimensions,
these ``quotient apartments'' can be described
as the largest subcomplexes of the quotient that 
correspond to some explicit 
families of split maximal orders. Then the 
unbranched sectors in Th. 
\ref{MthE}, i.e., the cuspidal sectors, can be 
precisely described in terms of the corresponding split
vector bundles, extending the result of Th. \ref{MThB} to the 
higher rank context. 

We expect that the tools developed here will inspire future studies on bundles
over curves, when the former are endowed with quadratic, symplectic, or Hermitian forms,
or possibly also on sheaves of $\mathtt{G}$-bundles 
(see \cite{Heinloth,KuznetsovQuadBund}). 
The automorphism groups of such forms over $K$ are classical, 
quasi-split or split, algebraic groups $\mathtt{G}$. 
The corresponding S-complexes $\mathtt{G}(\A)\backslash 
\mathcal{B}(\mathtt{G})$ have been 
studied in the previously cited works. 
Since the normalizer of an order with an involution 
or a form need not preserve vertex types (and very seldom does), the singular-complex 
approach introduced here should extend naturally to this setting.
We expect an analog of Th.~\ref{mainth1} holds, with the symmetric
group $S_n$ replaced by the Weyl group $W(\mathtt{G})$ acting on split apartments, and
an analog of Th.~\ref{mainth3} relating the resulting quotients to moduli of vector
bundles with orthogonal, symplectic, or Hermitian structure.
 The extension of Ths.~\ref{mainth-1} and~\ref{mainth1} 
 will possibly be straightforward.
Moreover, we believe that an analog of Th.~\ref{mainth5} 
holds in that setting, if the integer $n-1$ is  replaced by 
the (split) rank of the group $\mathtt{G}$. We plan to pursue 
this idea, along with its consequences, in future works.

\section{Conventions on lattices and orders}\label{section conventions}

For any abelian group $C$, we consider the
diagonal $\Delta_C=\{(a,\dots,a)|a\in C\}
\subseteq C^n$, and we let $\nabla_C=C^n/\Delta_C$.
We also identify $C^n$ with $\mathbb{Z}^n\otimes_{\mathbb{Z}}C$,
and write expressions like $(x_1, \cdots, x_n)a$ instead of either
$(x_1, \cdots, x_n) \otimes a$ or $(x_1 a, \cdots, x_n a)$ when convenient.
This is often applied to the vector
$\vec{\delta}=(1,\dots,1)$, so that we write 
$\Delta_C=\vec{\delta}C$. 

Let $X$ be a smooth, projective, geometrically integral curve defined 
over a field $k$. Let $P$ be 
a closed point in $X$. Let $K =K(X)$ be the field of rational functions 
on $X$, let $K_P$ be the completion at $P$ of $K$.
Let $\mathcal{O}_P$ be the ring of integers of $K_P$, and let us fix 
a uniformizer $\pi_P\in \mathcal{O}_P$.
The residue field $k(P)=\oo_P/\pi_P\oo_P$ at $P$ 
is a finite extension of $k$.
Note that $U_P = X \smallsetminus\{P\}$ is an affine open subset  of $X$.
We denote by $\A = \oo_X(U_P)$ the ring of rational functions on $X$ that 
are regular on $U_P$.

By an $X$-lattice in a finite dimensional
$K$-vector space $V$, we mean a sheaf of 
$\mathcal{O}_X$-modules whose generic fiber is $V$.
The term $X$-bundle is also used in literature.
For any sheaf of groups $\Lambda$ on $X$ we 
denote by $\Lambda(U)$ its group of $U$-sections. 
In particular, this convention applies to $X$-
lattices and $X$-orders. Note that, for every 
affine open subset $U\subseteq X$,
the $\mathcal{O}_X(U)$-module $\Lambda(U)$ is an 
$\mathcal{O}_X(U)$-lattice. If we denote
an $X$-lattice by $\Lambda$, then we write $\Lambda_Q\subseteq V_Q$ for its completion
at $Q$, for any closed point $Q\in X$. This is, by definition, the completion
at $Q$ of $\Lambda(U)$ for any affine open set $U$ containing $Q$. 
Completions satisfy the following properties, which, together, are called
the local-global principle for lattices
\cite[\S 81:14]{omeara}:
\begin{enumerate}[label={(LG\arabic*{})}]
    \item\label{LGdet} An $X$-lattice is completely determined by the family 
    $\{\Lambda_Q\}_Q$ of such completions.
    \item\label{LGequals} For any pair of $X$-lattices, $\Lambda$ and $\Lambda'$, the families 
    $\{\Lambda_Q\}_Q$ and $\{\Lambda'_Q\}_Q$ are coherent in the sense that 
    $\Lambda_Q=\Lambda'_Q$ at all but a finite number of points $Q$.
    \item\label{LGvariant} There exists an $X$-lattice
    corresponding to every prescribed family that is 
    coherent to $\{\Lambda_Q\}_Q$ in this sense.
\end{enumerate}
The latter property allows us to define
new lattices by modifying a finite number of 
completions of an existing 
lattice. 

When $V$ is a $K$-algebra, a lattice 
$\mathfrak{H}$ is called an $X$-order if
$\mathfrak{H}(U)$ is a subring with $1$ 
for any open set $U$, or, equivalently,
if $\mathfrak{H}_Q$ is a ring with $1$ for any 
closed point $Q$. A central simple 
algebra, like a matrix algebra, always has
maximal $X$-orders. This is proven in \cite[Cor. 10.4]{Reiner}
for Dedekind domains, and can be easily extended to
our setting by using property \ref{LGvariant} above.
Since an order $\DD$  in an 
algebra $\mathfrak{A}$ is a lattice
satisfying both $1_{\mathfrak{A}}\in\DD(X)$ and 
$\DD\DD=\DD$, the analog for orders
of the local-global principle in
\ref{LGdet}, \ref{LGequals} and \ref{LGvariant}
also holds.

All the preceding conventions have analogs for
lattices or orders defined over any open set 
$U\subseteq X$. We often apply them to 
$\A$-lattices, where $\A=\mathcal{O}_X(U_P)$ as
before. We also have local analogs.
Furthermore, every maximal order in 
$\mathbb{M}_n(K_P)$ is the endomorphism ring 
$\mathrm{End}(L)$ of an $\mathcal{O}_P$-lattice
$L$ in $K_P^n$, and $\mathrm{End}(L)=
\mathrm{End}(L')$ if and only if $L'=IL$ 
where $I$ is a (local) principal ideal.
The global version of this property states that
every maximal order in $\mathbb{M}_n(K)$ is the 
sheaf of endomorphism $\mathfrak{End}(\Lambda)$
of an $X$-lattice in $K^n$. This order is denoted
$\DD_\Lambda$ in the sequel. Furthermore
$\DD_\Lambda=\DD_{\Lambda'}$ if and only if 
$\Lambda'=\LL\Lambda$
for some rank one $X$-lattice (or sheaf of fractional 
ideals) $\LL$. Note that the order $\DD_\Lambda$ is
defined on every open set 
$U\subseteq X$ by
$$\mathfrak{D}_\Lambda(U)=\left\{a\in 
\mathbb{M}_n(K) 
\Big|a\Lambda(U)\subseteq\Lambda(U)\right\}.$$
For any basis $\beta=\{\vec{v}_1, \ldots, \vec{v}_n\}$ of $K_P^n$, i.e., a
local basis, a $\beta$-split local lattice is a module of the form
\begin{equation}\label{eq lattice}
L=L_{\vec u,P,\beta}= \pi_P^{u_1}\mathcal{O}_P\vec{v}_1 +\cdots + 
\pi_P^{u_n}\mathcal{O}_P\vec{v}_n,
\textnormal{ where }\vec u=(u_1,\dots,u_n)\in\mathbb{Z}^n. 
\end{equation}
Replacing $L$ by another lattice in the same homothety class
has the effect of replacing $\vec u$ by $\vec u+m\vec \delta$
where $\vec \delta=(1,\dots,1)$.
Thus, we can always assume $u_1=0$
by this type of replacement.

A divisor of $X$ is a formal sum of the form $B=\sum_{P \in X} n_P P$, where 
$P$ is a closed point of $X$ and $n_P \in \mathbb{Z}$.
The valuation $\nu_P(B)$ at $P$ of $B$ is the coefficient $n_P$.
The set of divisors forms a group with the usual addition, which we denote by 
$\mathrm{Div}(X)$ in the sequel.
The divisor $B$ above is effective when 
every coefficient is non-negative. We write $B'\leq B$ when $B-B'$ is 
effective.
For any divisor $B$, the associated sheaf of fractional ideals (also called a 
line bundle) is defined, at any open set $U \subseteq X$, by the formula:
$$\LL^{B}(U)=\lbrace f \in K \big{\vert} (\mathrm{div}(f)+B) \vert_U 
\geq 0 \rbrace.$$
When $\beta=\{\vec{v}_1, \ldots, \vec{v}_n\}$ is a $K$-basis of $K^n$, 
called a global basis in the sequel, a $\beta$-split $X$-lattice is a lattice of the form 
$$\LL_{\BB,\beta}= \LL^{B_1}\vec{v}_1 +\cdots + \LL^{B_n}\vec{v}_n,$$
for some vector of divisors $\BB=(B_1,\dots,B_n)\in \text{Div}(X)^n$.
Note that the completion at $P$ of $\LL_{\BB,\beta}$ is
$L_{\vec b,P,\beta}$, where 
$\vec b=\big(\nu_P(B_1),\dots,\nu_P(B_n)\big)$.
A split $X$-lattice is a $\beta$-split $X$-lattice for some 
(often unspecified) basis $\beta$. 

We say that a maximal $X$-order is split whenever it is the
endomorphism ring of a split $X$-lattice.
We can also specify the
basis and call it a $\beta$-split maximal $X$-order.
To state this more precisely, we use the following convention:
For any vector $\vec u$ we write $u_\beta$ for the column matrix whose
entries are the coefficients of $\vec u$ in the basis $\beta$.
In particular, if $\epsilon$ is the canonical basis, then
the entries of $u_\epsilon$ are just the coordinates of $\vec u$.
Then, a $\beta$-split maximal $X$-order has the form $\DD_{\BB,\beta}= 
\mathtt{P}^{-1}\left(\LL^{B_i-B_j}\right)_{ij}\mathtt{P}$,
for the matrix $\mathtt{P} \in \mathrm{GL}_n(K)$ satisfying $\mathtt{P}u_{\epsilon}=u_\beta$ for every vector $\vec u$.

In the sequel, we write $[\BB]$ for the class of a divisor vector 
$\BB$ in the quotient group $\nabla_{\mathrm{Div}(X)}=
\mathrm{Div}(X)^n/\Delta_{\mathrm{Div}(X)}$.
Since $\DD_{\BB+\vec \delta B',\beta}=\DD_{\BB,\beta}$ for any divisor 
$B'\in\mathrm{Div}(X)$, we usually write $\DD_{[\BB],\beta}$ instead.
The notation $[[B_2,\dots,B_n]]=[(0,B_2,\dots,B_n)]$ is also used.
Note that $[(B_1,B_2,\dots,B_n)]=[[B_2-B_1,\dots,B_n-B_1]]$.

The group $\mathbb{K}=(K^*)^n$ acts on the group $\mathrm{Div}(X)^{n}$
of divisor vectors
via $\vec F*\BB=\BB+\mathop{\mathrm{div}}(\vec F)$, where 
$\mathop{\mathrm{div}}(\vec F)=
\big(\mathop{\mathrm{div}}(f_1),\dots,\mathop{\mathrm{div}}(f_n)\big)$,
whenever $\vec F=(f_1,\dots,f_n)$. On the other hand, 
$\mathbb{K}$ also acts
on the set of ordered bases via $\vec F(\vec{v}_1,\dots,\vec{v}_n)=
(f_1\vec{v}_1,\dots,f_n\vec{v}_n)$. With these definitions, it is 
straightforward that $\DD_{\vec F*\BB,\beta}=\DD_{\BB,\vec F\beta}$.
We conclude that the isomorphism class of $\DD_{\BB,\beta}$
depends only on the image of $\BB$ in the quotient group
$\mathop{\mathrm{Pic}}(X)^n\cong \mathrm{Div}(X)^{n}/\mathbb{K}$, or, considering the discussion in the 
preceding paragraph, even the quotient
$\nabla_{\mathrm{Pic}(X)}$.
For this reason, if $\langle\BB\rangle$ denotes the class of 
$\BB$ in $\nabla_{\mathrm{Pic}(X)}$,
then the order $\DD_{\BB,\beta}$ is often written as 
$\DD_{\langle\BB\rangle}$. Note, however, that 
$\DD_{\langle\BB\rangle}$ is only defined as an abstract
order, or equivalently, as an isomorphism class of orders
in $\mathbb{M}_n(K)$.

\section{Main results}\label{section main results}

Let $X$ be a smooth, projective, geometrically integral curve defined over a 
field $k$, and let $K=k(X)$ be its function field.
In this article, we study a quotient of the Bruhat-Tits building of
$\mathrm{SL}_n(K_P)$ that classifies maximal $X$-orders on a
matrix $K$-algebra $\mathbb{M}_n(K)$.
This quotient is a higher dimensional version of the C-graph 
described in \S \ref{section intro}.
Let $D$ be a maximal $\A$-order in $\mathbb{M}_n(K)$, where $
\A = \mathcal{O}_X(U_P)$. The Bruhat-Tits building 
$\mathcal{B}_P(D)$ is a simplicial complex whose vertices are 
the maximal $X$-orders in $\mathbb{M}_n(K)$ having
$D$ as its restriction to $U_P$.
The general linear group $\mathrm{GL}_n(K)$ acts on the set of
maximal $\A$-orders by conjugation, whence the normalizer 
$N = \mathrm{Norm}_{\mathrm{GL}_n(K)}(D)$ acts on the set of 
vertices of $\mathcal{B}_P(D)$. This action extends naturally to 
an action of $N$ on the building itself. 
This suggests that such action can be used
to study the group $N$ or some of its sub-groups of finite  
index. It helps, therefore, to be able to describe the quotient
of the building corresponding to this action.

There is
a technical problem to make sense of this statement,
as the quotient building is not well defined. We have overcome
this problem in the past, in the $\mathrm{SL}_2$ case, where 
buildings are graphs, by passing to the barycentric subdivision.
This can be achieved in the general case, in principle, but
the required definitions become too cumbersome to be practical.
Here we use an alternate approach that regards a building as
a topological space with an associated singular chain complex.
See \S \ref{SSsing} and \S 
\ref{section Quotient Buildings and conjugacy classes} 
for details. Singular simplices can have multiple edges
or faces, so quotients can be defined with no assumption on
the action, other than being simplicial. These quotients 
fall in a category whose objects are called singular
complexes.

We say that two $X$-orders $\DD$ and $\DD'$ 
belong to the same genus whenever they are locally conjugate at all places, 
or equivalently, when they are adelically conjugate
(see \S \ref{section conventions} and
\S\ref{section Quotient Buildings and conjugacy classes}
respectively). Furthermore, we say $\DD$ and 
$\DD'$ are in the same spinor genus if there exist a 
global matrix $g \in \mathrm{GL}_n(K)$ and an adelic matrix $a$ 
with trivial determinant such that $\DD' = g a 
\DD a^{-1} g^{-1}$. An equivalence class under this 
relation is called a spinor genus.
This notion refines the genus classification: while the usual 
genus describes a local conjugacy condition, spinor equivalence 
introduces a finer global distinction, encoded in the quotient
group $\mathrm{Pic}(X)/\mathrm{Pic}(X)^n$.
Indeed, over affine open sets, spinor genera and conjugacy classes
coincide. Now we define the C-complex as the singular complex
$\mathscr{C}_P(D) = N \backslash \mathscr{B}_P(D)$, where $\mathscr{B}_P(D)$ is the singular complex associated to 
$\mathcal{B}_P(D)$. This is the right quotient to study orders, 
since we have the following theorem:

\begin{theo}\label{mainth0}
The vertex set of the C-complex $\mathscr{C}_P(D)$ is in bijection with 
the set of conjugacy classes of maximal $X$-orders in $\mathbb{M}_n(K)$ whose 
restrictions to $U_P$ are conjugate of $D$. Moreover, if $k$ is finite,
these are the conjugacy classes in precisely $d$
spinor genera of maximal $X$-orders, where $d$ is the order
of the image of $P$ in $\mathrm{Pic}(X)/\mathrm{Pic}(X)^n$. 
\end{theo}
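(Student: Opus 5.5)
The first assertion reduces to an orbit count on the vertices of $\mathcal{B}_P(D)$, and the second to a spinor-class computation with the reduced norm.

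\emph{Vertices.} The vertex set of the singular complex $\mathscr{C}_P(D)=N\backslash\mathscr{B}_P(D)$ is, by construction, the set of $N$-orbits of vertices of $\mathcal{B}_P(D)$. These vertices are the maximal $X$-orders $\DD$ with $\DD|_{U_P}=D$. I will construct a map from these orbits to conjugacy classes of maximal $X$-orders whose restriction to $U_P$ is conjugate to $D$, sending $\DD$ to its class.

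\emph{Surjectivity.} Suppose $\DD'|_{U_P}=gDg^{-1}$ with $g\in\mathrm{GL}_n(K)$. Then $g^{-1}\DD' g$ is a vertex of $\mathcal{B}_P(D)$.

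\emph{Injectivity.} Suppose two vertices satisfy $\DD'=g\DD g^{-1}$. Restricting to $U_P$ gives $gDg^{-1}=D$, so $g\in N$.

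\emph{Spinor genera.} Now assume $k$ is finite. All maximal $X$-orders lie in a single genus, since they are locally conjugate everywhere; locally, every maximal order is $\mathrm{End}(L)$ and $\mathrm{GL}_n$ acts transitively on lattices. I will use the standard parametrization of spinor genera in this genus. Fix $\DD_0$. Every $\DD$ in the genus has the form $a\DD_0a^{-1}$ for an adele $a$. Sending $\DD$ to the class of $\det(a)$ in
\[
J_K\big/K^*\,\det(\text{adelic stabilizer of }\DD_0)
\]
gives a bijection from spinor genera to that quotient. The adelic stabilizer of $\DD_0$ is $\prod K_Q^*\mathrm{GL}_n(\oo_Q)$, at least after conjugating $\DD_0=\DD_{\Lambda}$ locally to a standard order. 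Its determinants are $\prod (K_Q^*)^n\oo_Q^*$. The quotient is therefore $J_K/K^*\prod_Q (K_Q^{*})^n\oo_Q^*\cong\mathrm{Pic}(X)/n\mathrm{Pic}(X)$, which the paper writes as $\mathrm{Pic}(X)/\mathrm{Pic}(X)^n$.

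\emph{Restriction to $U_P$ and the role of $P$.} The same computation over $U_P$, where spinor genera equal conjugacy classes, gives the quotient $\mathrm{Pic}(\A)/n\mathrm{Pic}(\A)$ with $\mathrm{Pic}(\A)=\mathrm{Pic}(X)/\langle[P]\rangle$. Restriction to $U_P$ corresponds to the natural projection
\[
\mathrm{Pic}(X)/n\mathrm{Pic}(X)\to \mathrm{Pic}(X)/\big(n\mathrm{Pic}(X)+\langle [P]\rangle\big).
\]
Hence the spinor genera of $X$-orders whose restriction is conjugate to $D$ form one fibre of this map. That fibre is a coset of the image of $\langle[P]\rangle$ in $\mathrm{Pic}(X)/\mathrm{Pic}(X)^n$, which has exactly $d$ elements.

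Two compatibility points need checking. First, the restriction map is really this projection: modifying $a$ at $P$ changes the class by an idelic element supported at $P$, i.e.\ by multiples of $[P]$. Second, conjugacy over $U_P$ is detected exactly by this spinor class; this is strong approximation for $\mathrm{SL}_n$ with respect to $\{P\}$, which I will cite as in \cite{ARENASCARMONA2014258}. Together these show that every order in the fibre restricts to a conjugate of $D$. Combined with the bijection above, the vertex set is the union of the conjugacy classes in these $d$ spinor genera.

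\emph{Main obstacle.} The hardest step is the careful identification of the spinor class group and of the restriction map with the projection. In particular, showing that every element of $\langle [P]\rangle$ is realized needs the adele $a$ to be modified only at $P$ by $\mathrm{diag}(\pi_P,1,\dots,1)$. The vertex bijection itself is formal.
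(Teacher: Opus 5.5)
Your proposal is correct and follows essentially the same route as the paper. The vertex bijection is the orbit correspondence of Lemma~\ref{ourmainl}, which you verify directly. The count of $d$ spinor genera comes from the same ingredients the paper uses: the identifications of spinor genera with $\mathrm{Pic}(X)/\mathrm{Pic}(X)^n$ and $\mathrm{Pic}(\A)/\mathrm{Pic}(\A)^n$ (Lemmas~\ref{lemma spinor genus} and~\ref{afinsgescl}, the latter via strong approximation), together with the observation that the kernel of the natural projection is the image of $\langle [P]_X\rangle$, of order $d$; you are more explicit than the paper about why restriction to $U_P$ induces that projection.
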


\begin{ex}
If $X=\mathbb{P}^1$, then the integer $d$ in Th. \ref{mainth0} is $\frac nc$,
where $c$ is the greatest common divisor 
between $n$ and the degree of $P$, since $\mathrm{Pic}(X) \cong\ZZ$.
\end{ex}

An $m$-dimensional complex, for an integer $m\geq1$, is called combinatorially 
finite if it is the union of a finite number of cuspidal sector chambers 
and an $r$-neighborhood, for $r\geq 0$, of a combinatorially 
finite $(m-1)$-dimensional complex. A $0$-dimensional
combinatorially finite complex is a finite set of vertices.
With this definition we can give the following structural result 
for the C-complex:

\begin{theo}\label{mainth-1}
For any maximal $\A$-order $D\subseteq\mathbb{M}_n(K)$, and for any 
place $P$, there exists a family of sector chambers 
$\mathbf{Q}_1, \cdots, \mathbf{Q}_s$ in $\mathscr{B}_P(D)$, called cuspidal 
sector chambers, such that:
\begin{itemize}
\item[(a)] the $N$-orbits of distinct sector chambers 
$\mathbf{Q}_i$ are disjoint, and each injects into 
the subjacent topological space of
the quotient $\mathscr{C}_P(D)$ via the canonical 
projection,
\item[(b)] the quotient $\mathscr{C}_P(D)$ is obtained by gluing the associated closed sector chambers $\mathscr{Q}_1, \cdots, \mathscr{Q}_s$ and
a connected singular complex $\mathscr{Y}$ along suitable subcomplexes, and
\item[(c)] the complex $\mathscr{Y}$ is an $r$-neighborhood of
an $(n-2)$-dimensional combinatorially finite complex.
\end{itemize}
In particular, $\mathscr{C}_P(D)$ is a combinatorially finite 
$(n-1)$-dimensional complex.
\end{theo}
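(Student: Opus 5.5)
The plan is to reduce Theorem \ref{mainth-1} to the known description of the S-type quotient $\mathrm{SL}_n(\A)\backslash\mathcal{B}$ from \cite[Th.~2.4]{BravoLoisel} and Theorem \ref{MthE}, and then pass to $N$. First, I would identify $\mathcal{B}_P(D)$ with $\mathcal{B}(\mathrm{SL}_{n,K_P})$, after conjugating $D$ to a convenient form $\mathrm{End}_\A(\Lambda(U_P))$. Then I would compare the groups. The group $\Gamma_0=\mathrm{SL}_n(\A)\cap N$ has finite index in $\mathrm{SL}_n(D)=D^*\cap\mathrm{SL}_n(K)$, which is an arithmetic group commensurable with $\mathrm{SL}_n(\A)$. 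Moreover, the image of $N$ in $\mathrm{PGL}_n(K)$ contains the image of $\mathrm{SL}_n(D)$ with index bounded by a finite group: it is controlled by $D^*/\A^*\mathrm{SL}_n(D)$ and by the $n$-torsion-type quotient of $\mathrm{Pic}(\A)$ coming from normalizing elements. The key point is that $N$ acts on $\mathcal{B}$ through a group $N'\supseteq \Gamma_0 K^*$ in which $\Gamma_0K^*/K^*$ has finite index. Since $D$ may not be free, I would use the version of \cite{BravoLoisel} for $\mathrm{SL}(D)$, or the $S$-arithmetic covering of Theorem \ref{teo cover witzel} in the finite-field case. That version gives cuspidal rational sector chambers $\mathbf{Q}'_1,\dots,\mathbf{Q}'_t$ for $\Gamma_0$ whose orbits are disjoint, plus a complement $\mathscr{Y}_0$ of the stated combinatorial-finiteness type.

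Second, I would push this down along the finite ramified covering $\Gamma_0\backslash\mathscr{B}_P(D)\to \mathscr{C}_P(D)$. The finite group $N/\Gamma_0K^*$ permutes the cuspidal chambers $\mathbf{Q}'_i$ up to equivalence. By definition, cuspidal sectors are flat: by Theorem \ref{MthE}, stars of simplices deep inside are covered by the orbit of their stars in the sector. I would argue that an element $g\in N$ mapping a deep subsector of $\mathbf{Q}'_i$ into the $\Gamma_0$-orbit of $\mathbf{Q}'_j$ must send a full subsector to a subsector. This follows because $g$ preserves the asymptotic building structure and the parabolic (flag) attached to the chamber at infinity. I would choose representatives $\mathbf{Q}_1,\dots,\mathbf{Q}_s$ of these $N$-classes. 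An element stabilizing a class acts on $\mathbf{Q}_i$ by a map preserving the sector's chamber at infinity; such a map acts on a deep subsector as a translation inside the apartment, and hence trivially up to $\Gamma_0$. After replacing each $\mathbf{Q}_i$ by a sufficiently deep subsector, this gives the injectivity in (a) and the disjointness of orbits. The discarded regions are absorbed into $\mathscr{Y}$.

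Third, for (b) and (c), I would set $\mathscr{Y}$ to be the image of $\mathscr{Y}_0$ together with the removed bands near the boundary of the sectors. Since taking the quotient by a finite group preserves $r$-neighborhoods up to increasing $r$, and preserves the inductive combinatorial finiteness (finitely many cuspidal sectors in each lower-dimensional stratum), $\mathscr{Y}$ is an $r$-neighborhood of an $(n-2)$-dimensional combinatorially finite complex. Connectedness of $\mathscr{Y}$ follows from the connectedness of $\mathscr{Y}_0$, or can be obtained by enlarging $r$. The gluing along subcomplexes is formal in the singular-complex category of \S\ref{SSsing}, since the quotient of a closed subcomplex is a subcomplex.

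The main obstacle I expect is the second step: controlling how the extra elements of $N$ outside $K^*\mathrm{SL}_n(D)$ act on cusps. These elements need not preserve vertex types, so they may map a sector chamber onto a sector whose type is permuted by an automorphism of the affine Dynkin diagram. I would first try to show, via the action on the spherical building at infinity, that such an element fixing a cusp class induces on a deep subsector a type-rotating translation composed with an element of $\Gamma_0$. Such a map has no fixed points and can be shown not to identify distinct points of a deep enough subsector.
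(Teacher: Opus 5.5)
Your route is the one the paper only mentions in the Remark after its proof: inherit the cuspidal decomposition from \cite{BravoLoisel} through commensurability. The paper instead argues directly inside globally split apartments. The genuine gap is in Step~2, exactly where (a) is decided.

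\textbf{The translation step fails.} You arrive at an element $h=\gamma g$ that acts on a deep subsector of $\mathbf{Q}'_i$ as a translation by some $t\in\mathbb{Z}^n/\mathbb{Z}\vec\delta$. You then conclude ``hence trivially up to $\Gamma_0$'', and your last paragraph justifies this by saying that such a (type-rotating) translation has no fixed points and does not identify distinct points of a deep enough subsector. That is false. If $t\neq0$, then $x$ and $x+t$ have the same image in $N\backslash\mathbf{B}_P(D)$. Every open sector, however deep, contains such pairs, since it contains balls of arbitrarily large radius. Absence of fixed points is irrelevant; what must be shown is $t=0$. Type-rotating translations do occur in $\mathrm{GL}_n(K_P)$, so this is not automatic.

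\textbf{Two ways to close it.} Within your framework, a finiteness argument works. Since $[N:\Gamma_0K^*]<\infty$ (for $k$ finite), $h^m\in\Gamma_0K^*$ for some $m\geq1$. On a deeper subsector, $h^m$ acts as $x\mapsto x+mt$ with image in $\mathbf{Q}'_i$. Injectivity of $\mathbf{Q}'_i$ into $\Gamma_0\backslash\mathbf{B}_P(D)$ then forces $mt=0$, hence $t=0$.

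The paper's mechanism is arithmetic instead:
\begin{itemize}
\item By Prop.~\ref{prop_canonical}, conjugate split orders have divisor vectors differing by a permutation, linear equivalence and a diagonal shift.
\item Cor.~\ref{cor N and H action are the same} and Prop.~\ref{prop H N action on simplices} turn this into the statement that every $N$-identification inside a GSA comes from the finite group $H\subseteq S_n$, acting linearly in degree coordinates.
\item Good sectors indexed by coset representatives of $H$ in $S_n$ then form a fundamental set.
\item Lemma~\ref{enoghcp} gives the flatness needed for (b), and Props.~\ref{smalllo} and \ref{FBA1} give (c).
\end{itemize}

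\textbf{The external input is thinner than you assume.} \cite{BravoLoisel} is stated for Chevalley groups $\mathtt{G}(\A)$. When $D$ is not conjugate to $\mathbb{M}_n(\A)$, the group $D^*\cap\mathrm{SL}_n(K)$ is not conjugate to $\mathrm{SL}_n(\A)$. So you must transfer flatness and injectivity of cusps from $\mathrm{SL}_n(\A)$ to commensurable groups, which is precisely the inheritance statement the Remark leaves unproved. Besides the translation issue, that transfer needs two further pieces:
\begin{itemize}
\item a continuation argument showing that an identification near one deep point of a cusp propagates to a single affine map on a whole subsector (your ``must send a full subsector to a subsector'' is only asserted);
\item a uniform treatment of identifications with nontrivial linear part, such as the rotation in Example~\ref{e7}, which only affect bounded regions but must be controlled.
\end{itemize}

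\textbf{Smaller points.}
\begin{itemize}
\item Commensurability and the finiteness of $N/\Gamma_0K^*$ require $k$ finite.
\item $\Gamma_0=\mathrm{SL}_n(\A)\cap N$ need not be normal in $N$. Use $D^*\cap\mathrm{SL}_n(K)$ if you want a genuine finite group acting on the intermediate quotient.
\item The shape of $\mathscr{Y}$ demanded in (c) is not part of \cite[Th.~2.4]{BravoLoisel} as recalled in the introduction. You would still need Th.~\ref{teo cover witzel} or an argument like Props.~\ref{smalllo} and \ref{FBA1}.
\end{itemize}
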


The preceding result refines, for $N$, the theorem of Bux, Köhl and
Witzel quoted in \S\ref{section intro}. Indeed, the latter provides a covering
by $N$-orbits, while the former describes the shape of the quotient
itself. See Th.~\ref{teo cover witzel} for details.
Next result makes the description of the cusps more 
precise in terms of suitable images of certain explicit
apartments $\Sigma_{\BB,P,\beta}$ of $\mathscr{B}_P(D)$ 
in the quotient $\mathscr{C}_P(D)$, called globally $\beta$-split 
apartments. These are subcomplexes whose vertex set corresponds 
to a set of maximal $X$-orders of the form $\DD_{\vec{B},\beta}$, which 
are split by a common global basis $\beta$. See \S \ref{section BTB} 
for details. If $\beta$ is not relevant we simply say a globally split
apartment, or a GSA.

\begin{theo}\label{mainth1}
There is a finite number of GSA's $\Sigma_{\BB,P,\beta}$ in 
$\mathscr{B}_P(D)$ whose images $\hat{\Sigma}_{\BB,P,\beta}$ in 
$\mathscr{C}_P(D)$ are isomorphic to quotients 
$W\backslash\Sigma_{\BB,P,\beta}$, where $W$ is a group of affine 
transformations on $\Sigma_{\BB,P,\beta}$, regarded as an affine 
space. Furthermore, $W$ is isomorphic to a subgroup of the symmetric 
group $S_n$ that depends on the apartment. 
Each set of the form $\hat{\Sigma}_{\BB,P,\beta}$ contains 
$n!/|W|$ cuspidal sectors, and, conversely, every 
cuspidal sector is contained in a globally split apartment.
The vertices lying on the sets 
$\hat{\Sigma}_{\BB,P,\beta}$ are precisely those corresponding to 
split maximal orders.
\end{theo}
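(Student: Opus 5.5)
We take $D$ to be split, say $D=\DD_{\BB,\beta}(U_P)$ for a global basis $\beta$, so that the vertices of a GSA $\Sigma_{\BB,P,\beta}$ are the orders $\DD_{\BB+\vec u P,\beta}$ with $\vec u\in\ZZ^n$ modulo $\vec\delta$. These are the endomorphism rings of the local lattices $L_{\vec b+\vec u,P,\beta}$, and the apartment is identified with $\nabla_{\R}\cong\R^{n-1}$.

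The first step is to compute the image of $\Sigma=\Sigma_{\BB,P,\beta}$ in $\mathscr{C}_P(D)$. Two vertices $\DD_{\BB+\vec uP,\beta}$ and $\DD_{\BB+\vec u'P,\beta}$ are identified exactly when some $g\in N$ conjugates one into the other. Since $\mathrm{GL}_n(K)$-conjugacy of split orders is controlled by the class $\langle\BB\rangle\in\nabla_{\mathrm{Pic}(X)}$ together with permutations of the coordinates, I expect the following characterisation. An identification occurs when there are a permutation $\sigma\in S_n$, an $\vec F\in\mathbb{K}$ and an integer vector $\vec m$ with
\[\sigma(\BB+\vec u P)=\BB+\vec u' P+\mathrm{div}(\vec F)+\vec\delta B' .\]
For the conjugating element to lie in $N$, its restriction to $U_P$ must normalize $D$. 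This forces $\mathrm{div}(\vec F)$ to be supported at $P$ modulo $\Delta$. Define
\[W=\bigl\{\sigma\in S_n:\ \sigma[\BB]-[\BB]\in \nabla_{\ZZ P}+\mathrm{div}(\mathbb{K})\bigr\},\]
so that each $\sigma\in W$ acts on $\Sigma$ by $\vec u\mapsto\sigma\vec u+\vec c_\sigma$, an affine map. This gives the affine action of $W\le S_n$ on $\Sigma$. The main obstacle is the converse: an arbitrary $g\in N$ mapping one vertex of $\Sigma$ to another need not stabilize $\Sigma$. To handle it, I would use the fact that any two apartments containing a common chamber, or a common sector germ, are related by an element of the stabilizer. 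I would then argue, through the uniqueness of Harder–Narasimhan type splittings, that $\DD_{\BB+\vec uP,\beta}$ determines the multiset of line-bundle classes $\langle B_i+u_iP\rangle$ up to a common twist. This forces the identification to come from $W$. Making this work at all vertices, including those that are not "deep" in a sector, is where the main effort lies. For deep vertices, Th.~\ref{MthE} already gives injectivity, and I would bootstrap to the rest by comparing stabilizers of simplices.

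Given this, $\hat\Sigma\cong W\backslash\Sigma$ as singular complexes, because the quotient of $\Sigma$ by a finite group of simplicial maps is exactly what the singular-complex formalism of \S\ref{SSsing} allows. Every Weyl chamber direction of $\Sigma$ contains a subsector that is cuspidal in the quotient, by Th.~\ref{MthE} applied to $N$ through the finite-index comparison with $\mathrm{SL}_n(\A)$. There are $n!$ such directions, and $W$ permutes them freely, since nontrivial permutations move open Weyl chambers. This gives $n!/|W|$ cuspidal sectors.

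For finiteness, I would note that the GSAs up to $N$ are indexed by classes of $[\BB]$ modulo $\nabla_{\ZZ P}$, $\mathrm{div}(\mathbb{K})$ and $S_n$, with $[\BB]$ constrained by the genus of $D$. This is a quotient of the finite group $\nabla_{\mathrm{Pic}(\A)}$, or, via Th.~\ref{mainth-1}, a set matched with the finitely many cusps $\mathbf Q_1,\dots,\mathbf Q_s$. For the converse statement, a cuspidal sector contains vertices $\DD_\Lambda$ in which $\Lambda$ has a very unstable filtration. Deep enough in the sector, the successive slopes differ by more than $2g-2$, so the Harder–Narasimhan filtration splits by vanishing of $\mathrm{Ext}^1$ (Riemann–Roch). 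Hence $\Lambda$ is split in a global basis $\beta$, and the whole sector lies in $\Sigma_{\BB,P,\beta}$. Finally, a vertex is a split order $\DD_{\BB',\beta'}$ if and only if, after conjugation by $N$, its restriction to $U_P$ becomes $D$. After changing $\beta'$ within the split class of $D$, this says the vertex lies on some $\Sigma_{\BB,P,\beta}$, which proves the last sentence of the theorem.
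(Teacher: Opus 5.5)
Your vertex-level reasoning is essentially sound if the tool is Atiyah's Krull--Schmidt theorem rather than Harder--Narasimhan, which cannot tell apart non-isomorphic line bundles of equal degree. It shows that two vertices of $\Sigma=\Sigma_{\BB,P,\beta}$ identified by $N$ satisfy $\BB+\vec u'P\sim\sigma(\BB+\vec uP)+B_0\vec\delta$ coordinatewise. Hence $\sigma\in W$, and a monomial matrix realizes the identification. This is Cor.~\ref{cor N and H action are the same}, which the paper derives from Prop.~\ref{prop_canonical}. Your $W$ coincides with the paper's $H$. Your aside that $\mathrm{div}(\vec F)$ must be supported at $P$ modulo $\Delta$ is false, though: in Example~\ref{e7}, $\mathrm{div}(f)=P-3Q$.

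The genuine gap is the step you flag yourself and leave open. The isomorphism $\hat\Sigma_{\BB,P,\beta}\cong W\backslash\Sigma$ requires the following. Whenever $g\in N$ maps a simplex of $\Sigma$ with vertices $v_1,\dots,v_m$ onto a simplex of $\Sigma$ with vertices $v'_i=g\cdot v_i$, a \emph{single} element of $W$ must map every $v_i$ to $v'_i$. The vertex statement only gives a separate element for each $i$, so without this, interior points of simplices could be identified in ways that do not come from $W$.

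The tools you suggest do not supply this single element. First, exchanging apartments that share a chamber is done by elements of $\mathrm{GL}_n(K_P)$, not of the discrete group $N$. Second, Th.~\ref{MthE} concerns $\mathtt{G}(\A)$ and says nothing about the extra elements of $N$, which need not preserve types. It also only sees vertices deep inside sectors, while the identifications to be controlled occur near walls and fixed points too, such as the $\clubsuit$ in Example~\ref{e7}.

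What is missing is a description of $\mathrm{Stab}_N(v)$. The paper argues as follows (Prop.~\ref{prop H N action on simplices}).
\begin{enumerate}
\item The vertex statement reduces the problem to $g$ fixing a common vertex $v$.
\item Prop.~\ref{prop_canonical} with $\BB'=\BB$ then lets one take $g$ in the parabolic subgroup attached to the Levi blocks of $\DD_v(X)$. So $\bar g$ preserves the residual block flag $F_1\subseteq\cdots\subseteq F_r$.
\item Hence $\dim(F_j\cap V_i)=\dim(F_j\cap V'_i)$ for the coordinate flags $V_\bullet,V'_\bullet$ of the two simplices.
\item A block-preserving permutation, which lies in $H$, therefore carries one simplex onto the other.
\end{enumerate}
Krull--Schmidt cannot replace this, since it says nothing about automorphisms. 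You would also still need continuity of the inverse map, which the paper gets from the invariant metric.

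The cusp part has a similar problem. Deducing cuspidality in $\hat\Sigma_{\BB,P,\beta}$ from Th.~\ref{MthE} through a finite-index comparison is unjustified. The group $D^*\cap\mathrm{SL}_n(K)=\mathrm{SL}(L)$ need not be of the form $\mathtt{G}(\A)$ for a Chevalley model $\mathtt{G}$. More importantly, the embedding and flatness properties are not known to pass to an overgroup such as $N$; the Remark after the proof of Th.~\ref{mainth-1} points out exactly this. The paper instead proves flatness directly for good sectors (Lemmas~\ref{enoghcpa} and~\ref{enoghcp}). It obtains the embedding from a fundamental set for $H$ built from coset representatives of $H$ in $S_n$. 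Once that is in place, your count $n!/|W|$ is correct, via the free action of the linear parts of $W$ on the $n!$ Weyl chamber directions.

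In your converse, you only assert that deep vertices of an arbitrary cuspidal sector have slope gaps exceeding $2g-2$, and that is precisely what needs proof. The $\mathrm{Ext}^1$-vanishing splitting that follows is routine. In the paper this half holds essentially by construction, since the cusps of Th.~\ref{mainth-1} are chosen inside GSAs.
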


Although every split maximal order corresponds to a vertex in a 
quotient of the form $\hat{\Sigma}_{\BB,P,\beta}$, also called a
quotient split apartment or QSA, not all of them belong to a cuspidal 
sector, since they need not have flat stars. See Fig. \ref{nbh1} to see
what a flat star looks like. 

Next result connects the C-complex that we have described thus far with the S-complex 
that is most frequently treated in literature. See \S \ref{section intro}.
The latter is defined in this work
as the quotient $\mathscr{S}_P(D) =
\Gamma \backslash \mathscr{B}_P(D)$, where $\Gamma = K^* D^*$.
Note that, when $D=\mathbb{M}_n(\A)$, then 
$\Gamma=K^{*}\mathrm{GL}_n(\A)$.
Indeed, the S-complex $\mathscr{S}_P(D)$ has a 
natural moduli interpretation: its vertices are in bijection with 
the isomorphism classes of rank $n$ vector bundles 
on $X$ whose restriction to $U_P$ is isomorphic to $D = 
\DD(U_P)$. Although this might not be explicitly stated in 
this degree of generality in the literature, it follows 
from the same techniques as in \cite{SerreTrees, Stuhler} in a 
straightforward manner. Since every maximal $X$-order in 
$\mathbb{M}_n(K)$ is the endomorphism sheaf 
$\DD_\Lambda = \mathfrak{End}_{\mathcal{O}_X}(\Lambda)$ of a unique
homothety class (up to multiplication by an invertible sheaf) of
$X$-lattices $\Lambda$, as in \S\ref{section conventions}, 
the vertex set of $\mathscr{S}_P(\DD)$ also 
parametrizes $\Gamma$-orbits of maximal $X$-orders. 
Th.~\ref{mainth0} states that the C-complex 
$\mathscr{C}_P(\DD)$ provides a weaker classification.
Both complexes are related through a ramified cover,
which is a surjective and locally surjective map in the category
of singular complexes, see Def. \ref{Dram} for details. If $\phi:\mathscr{S}\twoheadrightarrow\mathscr{C}$ is a ramified cover,
a point $x$ in the realization $\mathbf{C}$ of $\mathscr{C}$ 
is unramified if $\phi$ is locally injective
at every pre-image of $x$. 

\begin{theo}\label{mainth3}
There exists an explicit ramified cover
$\phi:\mathscr{S}_P(D)\twoheadrightarrow\mathscr{C}_P(D)$,
for which every point in a cuspidal sector is unramified.
Furthermore, if $k$ is finite, the number $r$ of pre-images in $\mathscr{S}_P(D)$ of any 
vertex $v$ in $\mathscr{C}_P(D)$ is a divisor of $\frac1d|\mathrm{Pic}(X)/\mathrm{Pic}(X)^n|$,
with equality whenever the group $\mathrm{Pic}^{0}(X)$, which can be interpreted
as the group of $k$-points of the Jacobian variety, is trivial.
\end{theo}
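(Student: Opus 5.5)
The plan is to obtain $\phi$ as the map induced on quotients by the inclusion $\Gamma=K^*D^*\subseteq N=\mathrm{Norm}_{\mathrm{GL}_n(K)}(D)$. Since $\Gamma$ is normal in $N$, the quotient $\Gamma\backslash N$ acts on $\mathscr{S}_P(D)$ by automorphisms of singular complexes. The canonical projection $\mathscr{S}_P(D)=\Gamma\backslash\mathscr{B}_P(D)\to N\backslash\mathscr{B}_P(D)=\mathscr{C}_P(D)$ is then the quotient by this action. Surjectivity and local surjectivity (in the sense of Def.~\ref{Dram}) follow formally, because both projections from $\mathscr{B}_P(D)$ are locally surjective and $\phi$ factors the second through the first.

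Unramifiedness on cusps comes from Th.~\ref{mainth-1}(a). Let $x$ lie in a cuspidal sector $\mathbf{Q}_i$, and let $\tilde x\in\mathscr{S}_P(D)$ be a preimage, lifted to a point $y\in\mathscr{B}_P(D)$. The cuspidal sectors are those whose stars are flat, as in Th.~\ref{MthE}. So the star of $y$ is covered by the $N$-orbit of its star in the sector, and the stabilizer $N_y$ acts on that star. By injectivity of the sector into the quotient, any $g\in N$ identifying two points of the star of $y$ in the sector must fix them. It follows that the images of a neighbourhood of $y$ in $\mathscr{S}$ and in $\mathscr{C}$ coincide, and $\phi$ is locally injective at $\tilde x$. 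This uses the same property for $\Gamma$ (the $\Gamma$-version of Th.~\ref{MthE}); on cusps, $\Gamma_y$ and $N_y$ have the same image in the automorphisms of the star. I expect this to need a short check with the explicit cuspidal stabilizers, which are unipotent-by-torus and lie in $\Gamma$ up to scalars.

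For the counting with $k$ finite, the fibre over a vertex $v=[\DD_\Lambda]$ is the orbit of $\Gamma\backslash N$ on the $\Gamma$-classes of lattices mapping to $v$. This orbit is in bijection with $(\Gamma\backslash N)/\mathrm{Stab}$. The key computation is an identification of $N/\Gamma$. An element $g\in N$ sends $D=\mathfrak{End}(\Lambda)(U_P)$ to itself, so $g\Lambda(U_P)=I\Lambda(U_P)$ for a fractional $\A$-ideal $I$ with $I^n\sim\det$, that is, $[I]\in\mathrm{Pic}(\A)[n]$ up to the appropriate twist. I would prove that $g\mapsto [I]$ induces an injection of $N/\Gamma$ into a group $G_0$ of order $\frac1d|\mathrm{Pic}(X)/\mathrm{Pic}(X)^n|$, with image all of $G_0$. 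The main step is to identify $G_0$ with the kernel of $\mathrm{Pic}(X)/\mathrm{Pic}(X)^n\to$ (spinor genera), using the spinor-genus count of Th.~\ref{mainth0}, where the $d$ spinor genera correspond to $\langle[P]\rangle$. The fibre size $r$ is then the index of a stabilizer subgroup, so it divides $|G_0|$.

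For equality when $\mathrm{Pic}^0(X)=1$, I would show that the stabilizer is trivial. In that case $\mathrm{Pic}(X)\cong\ZZ$ via degree. Twisting $\Lambda$ by an ideal $I$ whose class is nontrivial in $G_0$ changes $\deg\Lambda$ modulo $n\deg P$ by a nonzero amount. Hence the twisted lattice is not isomorphic to $\Lambda$ restricted appropriately, so distinct cosets give distinct $\Gamma$-classes. The main obstacle is the precise identification $|N/\Gamma|=\frac1d|\mathrm{Pic}(X)/\mathrm{Pic}(X)^n|$. This requires a careful comparison of $\mathrm{Pic}(\A)=\mathrm{Pic}(X)/\langle[P]\rangle$ modulo $n$-th powers with $\mathrm{Pic}(X)/\mathrm{Pic}(X)^n$, which I would first attempt via the exact sequence $\langle [P]\rangle\to\mathrm{Pic}(X)/\mathrm{Pic}(X)^n\to \mathrm{Pic}(\A)/\mathrm{Pic}(\A)^n\to 0$.
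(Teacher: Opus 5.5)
Your outline has the same architecture as the paper's proof. The map $\phi$ is induced by $\Gamma\trianglelefteq N$, the fibre over a vertex is an orbit of the abelian group $N/\Gamma$ (so $r$ divides $|N/\Gamma|$), and equality comes from a degree argument. Your degree argument is a valid rephrasing of the paper's determinant argument. Take $L=\Lambda(U_P)$. The degree modulo $n\deg P$ of a lattice $\Lambda'$ normalized by $\Lambda'(U_P)=L$ is an invariant of vertices of $\mathscr{S}_P(D)$. An element $g\in N$ with $gL=IL$ shifts it by $-n\deg B_I$, where $\LL^{B_I}(U_P)=I$. The relevant twist is the renormalized lattice $\LL^{-B_I}g\Lambda'$, since $g\Lambda'\cong\Lambda'$ as bundles. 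However, two steps do not go through as written.

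\textbf{Unramifiedness.} Flatness with respect to $N$, together with injectivity of the sector, does not give local injectivity of $\phi$. Two points near $y$ that are $N$-equivalent but not $\Gamma$-equivalent give distinct points near $\tilde x$ with the same image. Everything therefore rests on the $\Gamma$-version of flatness, which you leave as an expected check. It cannot simply be imported from Th.~\ref{MthE}, which concerns $\mathtt{G}(\A)$, not $\Gamma=K^*D^*$ for an arbitrary maximal $\A$-order $D$. In the paper this step is Lemma~\ref{enoghcp}. At a vertex of a good sector, the image of $\DD(X)^*$, a subgroup of $D^*\subseteq\Gamma$, already acts on the star with exactly one orbit per simplex of the apartment. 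Prop.~\ref{prop H N action on simplices} says no two apartment simplices through such a vertex are identified by $N$. These two facts together give Prop.~\ref{HNap35}. You need this injectivity on the whole apartment star, not just on the sector, because the star of a point on the boundary of the cusp leaves the sector.

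\textbf{The count.} The map $g\mapsto[I]$ identifies $N/\Gamma$ with the $n$-torsion subgroup $\mathrm{Pic}(\A)[n]$ (Lemma~\ref{lemma N/G}). It does not identify it with the kernel of $\mathrm{Pic}(X)/\mathrm{Pic}(X)^n\to\mathrm{Pic}(\A)/\mathrm{Pic}(\A)^n$. That kernel is the image of $\langle[P]_X\rangle$ and has order $d$, not $\frac1d|\mathrm{Pic}(X)/\mathrm{Pic}(X)^n|$; it indexes the $d$ spinor genera of Th.~\ref{mainth0}. The missing link is that $\mathrm{Pic}(\A)$ is finite when $k$ is. Hence $|\mathrm{Pic}(\A)[n]|=|\mathrm{Pic}(\A)/\mathrm{Pic}(\A)^n|$, since these are the kernel and cokernel of multiplication by $n$. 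Your exact sequence then gives $|\mathrm{Pic}(\A)/\mathrm{Pic}(\A)^n|=\frac1d|\mathrm{Pic}(X)/\mathrm{Pic}(X)^n|$, as in the paper.
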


\begin{ex}
    If $X=\mathbb{P}^1$, then $l:=|\mathrm{Pic}(X)/\mathrm{Pic}(X)^n|/d=n/d$ is
    the greatest common divisor between $n$ and the degree of $P$. In 
    particular, if the latter are coprime, then $l=1$, and therefore
    there is a bijection between the vertex set of either complex.
    Actually, in this case $N/\Gamma$ is trivial since it is isomorphic
    to the annihilator of $n$ in the abelian group $\mathrm{Pic}(\A)$, whence
    $\mathscr{C}_P(D)$ and $\mathscr{S}_P(D)$ are isomorphic.
\end{ex}

The C-complex $\mathscr{C}_P(D)$ depends only on the conjugacy 
class of the
$\A$-order $D$. Let $\Xi$ denote the orbit set $\mathrm{GL}_n(K)\backslash\Theta_{\A}$,
where $\Theta_{\A}$ is the set of maximal $\A$-orders.
This allows us to define the 
full C-complex $\mathscr{C}_P=\coprod_{[D]\in \Xi} \mathscr{C}_P(D)$,
whose vertices correspond to all conjugacy classes of maximal $X$-orders. 
In this context we have next result:

\begin{theo}\label{mainth5}
Assume that $k$ is finite. Then the total number of cuspidal sector
chambers in the full C-complex is exactly  $\vert \mathrm{Pic}(\A)
\vert^{n-1}$.
The full C-complex has 
$|\mathrm{Pic}(R)/\mathrm{Pic}(R)^n|$ connected components.
Moreover, every connected component contains
the same number of cuspidal sector chambers, namely 
$\frac{|\mathrm{Pic}(\A)|^{n-1}}{|\mathrm{Pic}(R)/\mathrm{Pic}(R)^n|}$.
\end{theo}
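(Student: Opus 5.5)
The plan is to count cusps through the S-complexes and then push the count down along the covers of Theorem \ref{mainth3}. The starting point is the known structure of the S-complex, via the result of Bravo–Loisel quoted after Theorem \ref{MthE}. For $D=\mathbb{M}_n(\A)$, the quotient $\mathrm{SL}_n(\A)\backslash\mathcal{B}$ has cuspidal rational sector chambers indexed by $\mathrm{Pic}(\A)^{n-1}$, the rank of $\mathrm{SL}_n$ being $n-1$.

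First I would reinterpret these cusps in terms of split bundles, using Theorem \ref{mainth1}. Every cuspidal sector lies in a quotient split apartment. Far enough inside a sector chamber, a vertex corresponds to a split lattice $\LL_{\BB,\beta}$ with strictly increasing $P$-valuations. The cusp is then determined by the classes in $\mathrm{Pic}(\A)$ of the restrictions $\LL^{B_i}|_{U_P}$, ordered by the chamber and taken modulo simultaneous twisting. This gives a bijection between the cusps of the full S-complex $\coprod_{[D]}\mathscr{S}_P(D)$ and $\nabla_{\mathrm{Pic}(\A)}=\mathrm{Pic}(\A)^n/\Delta$. That set has exactly $|\mathrm{Pic}(\A)|^{n-1}$ elements.

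Second, I would pass to the C-complex. The group $N/\Gamma$ acts on each $\mathscr{S}_P(D)$, and by Theorem \ref{mainth3} the map $\phi$ is unramified on cuspidal sectors. Since cusps are far from the ramification locus, I expect each cusp of $\mathscr{C}_P$ to have a single preimage cusp. The key point is that $N$ cannot identify two distinct cuspidal sectors of $\mathscr{S}_P(D)$. An element of $N$ mapping one deep sector into another would conjugate the split orders $\DD_{\BB,\beta}$ into $\DD_{\BB',\beta'}$ with both vectors strictly ordered. By the uniqueness of the Harder–Narasimhan type filtration for such very unbalanced split bundles, the underlying lattices then differ by a line bundle twist. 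So the two sectors already agree in the full S-complex, up to the homothety already absorbed in $\nabla$. Granting this, the total number of cuspidal sector chambers in the full C-complex is $|\mathrm{Pic}(\A)|^{n-1}$.

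Third, I would count connected components. Each $\mathscr{C}_P(D)$ is connected, by part (b) of Theorem \ref{mainth-1}: the cusps are glued to the connected $\mathscr{Y}$. So the number of components is $|\Xi|$, the number of conjugacy classes of maximal $\A$-orders. Such orders are $\mathrm{End}(\Lambda)$ for $\A$-lattices $\Lambda\cong\A^{n-1}\oplus I$, classified by the Steinitz class $[I]\in\mathrm{Pic}(\A)$. Twisting $\Lambda$ by a line bundle $J$ multiplies the Steinitz class by $J^n$, giving $|\Xi|=|\mathrm{Pic}(\A)/\mathrm{Pic}(\A)^n|$.

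Finally, for equidistribution, I would note that the Steinitz class of the restriction to $U_P$ of $\LL_{\BB,\beta}$ is $\sum_i[B_i|_{U_P}]$. The cusps in component $[D]$ are therefore the fibre over $[D]$ of the surjective homomorphism $\nabla_{\mathrm{Pic}(\A)}\to\mathrm{Pic}(\A)/\mathrm{Pic}(\A)^n$ induced by summing coordinates. All fibres of this homomorphism have the same size, namely $|\mathrm{Pic}(\A)|^{n-1}/|\mathrm{Pic}(\A)/\mathrm{Pic}(\A)^n|$. The main obstacle I expect is the second step: showing rigorously that $N$ does not merge distinct S-cusps, that is, the injectivity of cusps under $\phi$ beyond local unramifiedness.
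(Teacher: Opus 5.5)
Your final numbers are correct, but only because two false claims in your first two steps cancel each other.

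\textbf{First false claim.} The cusps of the full S-complex are not in bijection with $\nabla_{\mathrm{Pic}(\A)}$. Write $D=\mathrm{End}(L)$ with $L$ of Steinitz class $c$. A vertex of $\mathscr{S}_P(D)$ is an isomorphism class of bundles $\mathcal{E}$ with $\mathcal{E}|_{U_P}\cong L$, up to twisting by $\LL^{mP}$ only. Twists by line bundles that are nontrivial on $U_P$ are not absorbed by $\Gamma=K^*D^*$. So the cusps of $\mathscr{S}_P(D)$ are the ordered tuples $(I_1,\dots,I_n)$ of classes in $\mathrm{Pic}(\A)$ with $I_1\cdots I_n=c$ exactly, not taken modulo simultaneous twisting. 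There are $|\mathrm{Pic}(\A)|^{n-1}$ of them in \emph{each} component; the Bravo--Loisel count you quote is the case $c=1$. Hence the full S-complex has $|\Xi|\cdot|\mathrm{Pic}(\A)|^{n-1}$ cusps, and your own starting point already rules out the claimed bijection when $|\Xi|>1$.

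\textbf{Second false claim.} The ``key point'' of your second step is false, not merely hard. By Lemma~\ref{lemma N/G}, $N/\Gamma$ is the $n$-torsion subgroup $\mathrm{Pic}(\A)[n]$. It acts on these cusps by simultaneous twists $(I_j)\mapsto(JI_j)$, which preserve the ordering of the sector and act freely. Your Harder--Narasimhan argument correctly gives $\Lambda'\cong\LL^{B'}\otimes\Lambda$. But when $B'|_{U_P}$ is a nontrivial $n$-torsion class, the two sectors are distinct in $\mathscr{S}_P(D)$. This is exactly the identification made by $N/\Gamma$, and only on the C-side does the quotient by the diagonal absorb it. Unramifiedness in Th.~\ref{mainth3} is just local injectivity. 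In fact a deep cusp vertex has exactly $|\mathrm{Pic}(\A)/\mathrm{Pic}(\A)^n|$ preimages, because the stabilizer $S$ in the proof of Th.~\ref{mainth3} is trivial there, by your own HN argument. For instance, with $n=2$, $X=\mathbb{P}^1$ and $\deg P=2$, the two cusps $(1,1)$ and $(a,a)$ of $\mathscr{S}_P(\mathbb{M}_2(\A))$ are swapped by $N$, leaving the single half-line of Th.~\ref{MThB}.

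\textbf{How to repair it.} You have two options.
\begin{itemize}
\item Keep your route but fix the bookkeeping. There are $|\Xi|\cdot|\mathrm{Pic}(\A)|^{n-1}$ S-cusps, falling into free $N/\Gamma$-orbits of size $|\mathrm{Pic}(\A)[n]|=|\mathrm{Pic}(\A)/\mathrm{Pic}(\A)^n|=|\Xi|$, which gives $|\mathrm{Pic}(\A)|^{n-1}$ C-cusps.
\item Work directly on the C-side, as the paper does. By Th.~\ref{mainth1}, each QSA $\hat{\Sigma}_{\BB,P,\beta}$ carries $n!/|H|$ cusps, where $H$ is the stabilizer of $\BBB$. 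By Prop.~\ref{prop_canonical}, QSAs correspond to $S_n$-orbits in $\nabla_{\mathrm{Pic}(\A)}$. So the total is $|\nabla_{\mathrm{Pic}(\A)}|=|\mathrm{Pic}(\A)|^{n-1}$.
\end{itemize}
Once it is the cusps of the C-complex that are identified with $\nabla_{\mathrm{Pic}(\A)}$, your last two steps go through. The Steinitz-class count of $\Xi$ is an equivalent substitute for the paper's spinor-genus lemmas, and your fibre argument for equidistribution is the paper's. Connectedness of each $\mathscr{C}_P(D)$ needs no appeal to Th.~\ref{mainth-1}: it is a continuous image of the connected building.
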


Finally, in \S \ref{section examples} we compute several images of GSA's in
C-complexes and describe all cusps for the C-complex of $\mathbb{P}^1_k$ 
at a place $P$ of degree $2$ or $3$.
The case $\deg(P)=1$ is a direct 
consequence of Th.~\ref{teo soule quot} since $\mathrm{Pic}(R)$ is trivial.
These results could have interesting consequences on presentations of 
certain higher-rank Hecke congruence subgroups, as 
it is shown in \cite{BravoHecke} for $n=2$.

Applying the theory of small categories without loops
(cf.~\cite[Ch. III.$\mathcal{C}$]{Bri}) to the quotient 
complexes described in Ths.~\ref{mainth-1}, \ref{mainth1}, 
\ref{mainth3} and \ref{mainth5}, we hope to obtain 
in subsequent works
decompositions of arithmetic groups such as $N$ and $\Gamma$ 
as fundamental groups of a complex of groups built from the 
cuspidal sector chambers and the connected complex $\mathscr{Y}$,
generalizing the amalgamated product decompositions of Serre and Soul\'e recalled in \S \ref{section intro}.
Some results in this direction can be found in \cite[\S~11]{BravoLoisel}.
These decompositions would have natural consequences on the (co)homology of $N$ and $\Gamma$,
in the spirit of the works of Soul\'e~\cite[\S 2, Th. 3]{So}, Harder~\cite{H2},
Serre~\cite[Ch. II, \S 2]{SerreTrees}, Margaux~\cite{Margaux}, as well as 
Knudson and Wendt~\cite{Knudson1,Knudson2,KnudsonBook,Wendt}.

\section{The local building and its global 
interpretation}\label{section BTB}
The purpose of this section is to give a precise
definition of the Bruhat-Tits building $\mathcal{B}_P(D)$ 
introduced in \S \ref{section main results}.
In the next section we turn this complex into an object 
$\mathscr{B}_P(D)$ in a category where quotients can 
always be defined. For our purposes, we define the 
local Bruhat-Tits building $\mathcal{B}_P$ as follows:
\begin{enumerate}[label=($\star$\bf{\arabic*})]
\item\label{st1} The vertices of $\mathcal{B}_P$ are the
local maximal $\mathcal{O}_P$-orders. 
\item\label{st2} A set $\{\DD_1,\ldots,\DD_{m+1}\}$
of $m+1$ orders defines an $m$-simplex (or an $m$-dimensional simplex) 
if and only if we can write $\DD_i=\DD_{L_i}$ for suitable $\mathcal{O}_P$-lattices
$L_1,\dots, L_{m+1}$ satisfying the following chain of relations:
$$\pi_PL_{m+1}\subsetneq L_1\subsetneq \cdots \subsetneq L_m
\subsetneq L_{m+1}.$$
\end{enumerate} 
Note that $L_{m+1}/\pi_PL_{m+1}\cong k(P)^n$ in \ref{st2}, so that the
preceding chain can be interpreted as a flag of subspaces of
this quotient space.
Moreover, recalling that any local maximal order comes from a local lattice of full 
rank, we can see that $\mathcal{B}_P$ is exactly the Bruhat-Tits building of $\mathrm{SL}_n(K_P)$ as defined in \cite[\S 6.9]{AbramenkoBrown}.
A simplex of $\mathcal{B}_P$ of maximal dimension is called a
chamber. A $1$-simplex is called an edge. In the notations of \S \ref{section conventions}, an apartment 
$\Sigma=\Sigma_{P,\beta}\subseteq\mathcal{B}_P$ is a subcomplex whose vertices are the maximal
orders corresponding to lattices $L$ of the form 
$L=L_{\vec u,P,\beta}$, as in Eq. \eqref{eq lattice},
for some  $\vec u=(u_1,\dots,u_n)\in \ZZ^{n}$ and
for some fixed basis $\beta$ of $K_P^n$. 
The apartment 
$\Sigma_{P,\epsilon}\subseteq\mathcal{B}_P$, defined by the canonical basis $\epsilon$, is called the standard apartment.

\begin{ex}
If $n=3$ the local maximal orders $\DD_0$, $\DD_1$ and $\DD_2$
corresponding to the lattices $L_0= \oo_P^3$, 
$L_1=\oo_P\oplus\oo_P\oplus \pi_P\oo_P$ and 
$L_2=\oo_P\oplus \pi_P\oo_P \oplus \pi_P\oo_P$ are vertices 
of a 2-simplex (chamber) inside the standard apartment.
More explicitly, they are $\DD_0= \mathbb{M}_3(\oo_P)$, 
$$\DD_1 = \begin{pmatrix}
    \oo_P & \oo_P&\pi_P^{-1}\oo_P \\
    \oo_P&\oo_P &\pi_P^{-1}\oo_P\\ 
    \pi_P\oo_P& \pi_P\oo_P& \oo_P
\end{pmatrix} \text{ and } \quad  \DD_2 = \begin{pmatrix}
    \oo_P & \pi_P^{-1}\oo_P&\pi_P^{-1}\oo_P \\
    \pi_P\oo_P&\oo_P &\oo_P\\ 
    \pi_P\oo_P& \oo_P& \oo_P
\end{pmatrix}.$$
These orders correspond to all vertices of one chamber.
\end{ex}

It is a consequence of \ref{LGvariant} that we can identify an $X$-order
$\DD$ in $\mathbb{M}_n(K)$ with the pair $\big(\DD_P,\DD(U_P)\big)$, whose first coordinate is a
local order, and whose second coordinate is an $\A$-order.
By fixing an $\A$-order $D$, we obtain a natural correspondence 
between the vertices in $\mathcal{B}_P$ and the $X$-orders 
$\DD$ satisfying $\DD(U_P)=D$. Two orders
with the same restriction to $U_P$ are called $P$-variants.
By replacing vertices via this correspondence,
we define a new building whose vertices are all $P$-variants of any
of them. It is
denoted $\mathcal{B}_P(D)$, where
$D=\DD(U_P)$, or $\mathcal{B}_P(L)$,
if $L=\LL(U_P)$ denotes one lattice in the associated
homothety class.

\begin{ex}
For instance, if $\DD=\DD_{\BB,\epsilon}$, where $\epsilon$ is the canonical
basis of $K^n$, then $D=\DD(U_P)=
\left(\LL^{B_i-B_j}(U_P)\right)_{ij}$. This is the case for the order 
$\DD_{\vec 0,\epsilon}=\mathbb{M}_n(\oo_X)$.
\end{ex}

\begin{Def}\label{d42}
For a global basis
$\beta$ of $K^n$, a globally $\beta$-split apartment
$\Sigma=\Sigma_{\BB,P,\beta}\subseteq\mathcal{B}_P(D)$ is the
largest subcomplex whose 
vertices are the maximal orders $\DD_{\BB+\vec uP,\beta}$ corresponding to lattices 
$\LL$ of the form $\LL=
\LL_{\BB+\vec uP,\beta}$, for some  
$\vec u=(u_1,\dots,u_n)\in \ZZ^{n}$.  
\end{Def}

\begin{ex}
The natural correspondence 
between $\mathcal{B}_P$ and $\mathcal{B}_P(D)$
takes the standard apartment $\Sigma_{P,\epsilon}$ to the globally split
apartment $\Sigma_{\BB,P,\epsilon}$. 
\end{ex}

For a given choice of the lattice $L=\LL(U_P)$ and the basis
$\beta$, globally $\beta$-split apartments exist if and only if
the basis $\beta$ splits $L$, i.e., it splits its completion $L_Q$
at every closed place $Q\in U_P$. However, as $\mathcal{O}_X(U_P)$
is a Dedekind domain, for any choice of $L$ (or its associated 
order), there exists some global basis splitting it, and therefore
the building always has globally split apartments. 

\begin{lem}
The apartment
$\Sigma_{\BB,P,\beta}$ depends only on the class of $\BB$ in 
$\nabla_{\mathrm{Div}(\A)}$, 
where $\A$ is the coordinate ring of the affine 
curve $X\smallsetminus\{P\}$, $\mathrm{Div}(\A)$ is the 
ideal group of the Dedekind domain $\A$,
and the notation $\nabla_C$ is explained in \S\ref{section conventions}.
\end{lem}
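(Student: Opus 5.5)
The strategy is to compare vertex sets, since by Definition \ref{d42} the apartment $\Sigma_{\BB,P,\beta}$ is the largest subcomplex of $\mathcal{B}_P(D)$ with a prescribed vertex set. Two such apartments with the same vertex set therefore coincide. Recall that $\mathrm{Div}(\A)$ is the group of divisors supported on $U_P$, namely $\mathrm{Div}(X)/\ZZ P$, and that a divisor vector $\BB$ determines a class in $\nabla_{\mathrm{Div}(\A)}=\mathrm{Div}(\A)^n/\Delta_{\mathrm{Div}(\A)}$. Two vectors $\BB,\BB'$ have the same class exactly when $\BB'=\BB+\vec uP+\vec\delta B'$ for some $\vec u\in\ZZ^n$ and some $B'\in\mathrm{Div}(X)$. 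So the whole proof reduces to checking the two elementary moves $\BB\mapsto\BB+\vec u_0P$ and $\BB\mapsto\BB+\vec\delta B'$ separately.

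\emph{First move.} Let $\BB'=\BB+\vec u_0P$. The vertices of $\Sigma_{\BB',P,\beta}$ are the orders $\DD_{\BB+(\vec u_0+\vec u)P,\beta}$ with $\vec u\in\ZZ^n$. As $\vec u$ runs over $\ZZ^n$, so does $\vec u_0+\vec u$, so this is exactly the vertex set of $\Sigma_{\BB,P,\beta}$. We also check that both apartments live in the same building. The restrictions to $U_P$ agree, since $\LL^{B+mP}(U_P)=\LL^{B}(U_P)$ for every divisor $B$ and integer $m$. Hence $\LL_{\BB',\beta}(U_P)=\LL_{\BB,\beta}(U_P)$, and the ambient $\mathcal{B}_P(D)$ is the same.

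\emph{Second move.} Let $\BB'=\BB+\vec\delta B'$. Section \ref{section conventions} gives $\DD_{\BB+\vec\delta B'+\vec uP,\beta}=\DD_{\BB+\vec uP,\beta}$, because the corresponding lattices differ by tensoring with the invertible sheaf $\LL^{B'}$. So the vertex sets again coincide termwise. Note that $B'$ need not be supported away from $P$; the case $B'\in\ZZ P$ is already covered by the first move, and the general case is handled directly here.

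Combining the two moves shows that the vertex set, and therefore the subcomplex $\Sigma_{\BB,P,\beta}$, depends only on the class of $\BB$ in $\nabla_{\mathrm{Div}(\A)}$. The only step needing any care is this last one: the reduction from equal vertex sets to equal subcomplexes, which relies on the ``largest subcomplex'' clause of Definition \ref{d42}, together with the check that the ambient building $\mathcal{B}_P(D)$ itself is unchanged. Both are immediate once stated, so I expect no genuine obstacle.
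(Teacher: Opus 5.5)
Your proposal is correct and follows essentially the same argument as the paper: write $\BB'=\BB+\vec uP+\vec\delta B_0$, note that the $\vec\delta B_0$ summand leaves each order $\DD_{\BB_1,\beta}$ unchanged, and note that the $\vec uP$ summand just reindexes the set of lattices in Definition~\ref{d42}. Your extra check that the ambient building $\mathcal{B}_P(D)$ is unchanged is a harmless addition that the paper leaves implicit.
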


\begin{proof}
    We need to prove that $\Sigma_{\BB,P,\beta}=\Sigma_{\BB',P,\beta}$
    whenever $\BB'=\BB+\vec uP+\vec \delta B_0$, for a vector
    $\vec u\in\ZZ^n$ and a divisor $B_0$. 
    The summand $\vec \delta B_0$
    has no effect on the apartment, since  $\DD_{\BB_1,\beta}=
    \DD_{\BB_1+\vec \delta B_0,\beta}$, for any divisor vector $\BB_1$.
    The summand $\vec uP$
    has no effect since it preserves the set of lattices considered
    in Def. \ref{d42}.  
\end{proof}

\begin{ex}\label{ex GB}
If $X=\mathbb{P}^1$ then, by Birkhoff-Grothendieck Theorem 
\cite[Th. 2.1]{GA57}, any maximal $X$-order is $\beta$-split for 
some basis $\beta$ of $K^n$, so every vertex in $\mathcal{B}_P(D)$ 
belongs to a split apartment in this case.
\end{ex}

\begin{Remark}
    For a fixed basis $\beta$, any $\beta$-split apartment has a unique 
    maximal order $\DD_{\BB,\beta}$ with $\BB$ supported away from $P$. 
    This vertex is not special, as it might seem, since 
    $\DD_{\vec F*\BB,\beta}=\DD_{\BB,\vec F\beta}$, so its position can be 
    shifted arbitrarily by modifying the basis through this action.
\end{Remark}

\begin{Not}
For a vector $\vec u=(u_1,\dots,u_n)\in\ZZ^n$, we write 
$[[u_2-u_1,\dots,u_n-u_1]]$ for its class in $\nabla_{\ZZ}$.
This allows us to write $[[u_2-u_1,\dots,u_n-u_1]]P$ for a suitable element
in $\nabla_{\mathrm{Div}(X)}$. For the labeling of pictures in subsequent examples, we often use $[[a_2,\dots,a_n]]$ to indicate the vertex
corresponding to the order $\DD_{\BB+(0,a_2,\dots,a_n)P,\beta}$, where
the representative $\BB$ is chosen to be supported away from $P$.
However, we should keep in mind the preceding remark.  
\end{Not}

\begin{ex}
    If $n=3$, then the vertex  
    $\DD=\DD_{[[0,0]], \epsilon}$ has six neighboring vertices contained in 
    the standard apartment $\Sigma_{\vec0,P,\epsilon}$. Explicitly, 
    they are of the form $\DD_{[[m_1,m_2]]P, \epsilon}$ with 
    $[[m_1,m_2]]\in \{[[\pm 1,0]], [[0,\pm 1]], \pm[[1,1]]\}$ 
    as Fig. \ref{nbh1} shows. 
\begin{figure}[h]
\begin{center}
\begin{tikzpicture}[scale=1]
\draw (1,0)--(2,0); \filldraw (1,0) circle(1.5pt); \filldraw (2,0) circle(1.5pt);
\draw (1,0) -- ++(60:2);\draw (2,0) -- ++(120:2);
\draw (1,0) -- ++(120:1);\draw (2,0) -- ++(60:1);
\filldraw (1.5,0.877) circle(1.5pt);
\draw (0.5,0.877) -- ++(0:2);
\filldraw (0.5,0.877) circle(1.5pt);\filldraw (2.5,0.877) circle(1.5pt);
\draw (0.5,0.877) -- ++(60:1);\draw (2.5,0.877) -- ++(120:1);
\draw (1,1.74)-- ++(0:1); \filldraw (1,1.74) circle(1.5pt); 
\filldraw (2,1.74) circle(1.5pt);
\node [above] at (1.9,0.5) {${}_{{}_{[[0,0]]}}$};
\node [above] at (2.9,0.5) {${}_{{}_{[[1,0]]}}$};
\node [above] at (-0.1,0.5) {${}_{{}_{[[-1,0]]}}$};
\node [above] at (1,-0.5) {${}_{{}_{[[-1,-1]]}}$};
\node [above] at (2,-0.5) {${}_{{}_{[[0,-1]]}}$};
\node [above] at (1,1.8) {${}_{{}_{[[0,1]]}}$};
\node [above] at (2,1.8) {${}_{{}_{[[1,1]]}}$};
\end{tikzpicture}
\end{center}
\caption{The neighbors of the order $\DD_{0,\epsilon}$
lying on the standard apartment.
Note that the label $[[a,b]]$ corresponds to the order 
$\DD_{[[a,b]]P,\epsilon}$.}\label{nbh1}
\end{figure}
\end{ex}

\section{The category of singular complexes}\label{SSsing}

We can associate to every simplicial complex $\mathcal{B}$, a
topological space $\mathbf{B}$ called its realization (also 
called the polyhedron of $\mathcal{B}$ in current literature) by gluing 
together geometrical simplices. The standard geometrical $n$-simplex 
$\mathbf{T}_n\subseteq\mathbb{R}^{n+1}$ is the convex hull of the basis 
vectors $\vec e_1,\dots,\vec e_{n+1}$. One takes a copy of $\mathbf{T}_n$
for any $n$-dimensional simplex of $\mathbf{B}$, for every integer $n\geq0$.
For any subset $\{i_1,\dots,i_{m+1}\}$ of $\{1,\dots,n+1\}$, with
$i_1<\cdots<i_{m+1}$, there is a linear map sending $\vec e_j\in\mathbb{R}^{m+1}$
to $\vec e_{i_j}\in\mathbb{R}^{n+1}$ called a face map. 
Its image is called a face, and a proper
face when $m<n$. The complement of the union of the proper faces is
called the interior of the simplex and denoted $\mathbf{T}_n^o$.
The face maps are used to glue the geometrical simplex corresponding 
to each simplex of $\mathcal{B}$ with
the geometrical simplices corresponding to its faces. The quotient space
obtained from these identifications is the realization $\mathbf{B}$.
When a group $G$ acts on a complex $\mathcal{B}$, 
the quotient topological space 
$G\backslash\mathbf{B}$ is defined even when the quotient complex is not. 
Quotient complexes can be defined for arbitrary simplicial actions by
passing to the second barycentric subdivision of the complex. However,
for our purposes, it is sufficient to work with the topological space,
together with some combinatorial information. The tools to do this are 
introduced in this section.

Recall that a singular $n$-dimensional simplex in a topological 
space $X$ is a continuous function $\sigma$ from a standard simplex 
$\mathbf{T_n}$ to $X$, as in \cite[\S 2.1]{Hatcher},
while a face is a composition $\sigma\circ\phi$ where 
$\phi$ is any of the face maps
$\phi:\mathbf{T}_k\rightarrow\mathbf{T}_n$, with $k<n$,
as defined above. Permutations of a singular
simplex are defined analogously. The interior of
a singular simplex $\sigma:\mathbf{T}_n\rightarrow\mathbf{X}$
is just the set $\sigma(\mathbf{T}_n^o)$. The star of $\sigma$
is the union of the interiors of all simplices having $\sigma$ as 
a face. A singular complex is a pair
$\mathscr{X}=(\mathcal{X},\mathbf{X})$, where $\mathbf{X}$ is
a topological space, while $\mathcal{X}$ is a set of 
singular simplices in $\mathbf{X}$ satisfying the following 
hypotheses:
\begin{enumerate}
    \item Any face of a simplex in $\mathcal{X}$ is also
    a simplex in $\mathcal{X}$.
    \item Any permutation of a simplex in $\mathcal{X}$ is 
    also a simplex in $\mathcal{X}$.
    \item Any point in $\mathbf{X}$ is in the interior of a unique 
    simplex in $\mathcal{X}$, up to permutation.
    \item The star of any simplex is open in $\mathbf{X}$.
\end{enumerate}
Note that we do not assume that singular simplices are 
injective. This gives us freedom to consider simplices
with repeated faces, and also ``folded simplices'', i.e.,
the quotient of a simplex by the natural extension of an
action by permutations on its vertices.  See Fig.
\ref{fig1new} for a few examples of this phenomenon.
\begin{figure}[h]
\centering
% Define a picture box size (width, height) in terms of unit length
\setlength{\unitlength}{.6cm}
\begin{picture}(5,3.5)
  \put(1.85,2.65){\makebox(0,0){${}^\spadesuit$}}
  \put(0,0){\makebox(0,0){$\bullet$}}
  \put(3.66,0){\makebox(0,0){$\bullet$}}
    \thicklines
  \put(0,0){\line(1,0){3.66}}\put(0,0){\line(2,3){1.803}}
  \put(3.66,0){\line(-2,3){1.803}}
\end{picture}
\setlength{\unitlength}{.4cm}
\begin{picture}(6,4)
\put(2,4.3){\line(-2,-3){1.9}}
\put(2,4.3){\line(2,-3){1.9}}
\put(2,4.15){\makebox(0,0){${}^\spadesuit$}}
\put(2,0){\makebox(0,0){$\bullet$}}
  \qbezier(0,1)(0,2)(2,2) \qbezier(4,1)(4,2)(2,2)
  \thicklines
\qbezier(0,1)(0,0)(2,0)\qbezier(4,1)(4,0)(2,0)
\qbezier(1.47,1.97)(2,2.05)(2.53,1.97)
  \qbezier(2,4.3)(1,1.5)(2,0)
  \qbezier(2,4.3)(3,1.5)(2,0)
\end{picture}
\qquad\qquad
\setlength{\unitlength}{.6cm}
\begin{picture}(5,3.5)
  \put(1.8,2.75){\makebox(0,0){$\bullet$}}
  \put(1.6,2.4){\makebox(0,0){$*$}}
  \put(0,0){\makebox(0,0){$\bullet$}}
  \put(0.4,0){\makebox(0,0){$*$}}
  \put(3.66,0){\makebox(0,0){$\bullet$}}
  \put(3.4,0.35){\makebox(0,0){$*$}}
  \put(1.85,1){\makebox(0,0){${}^\clubsuit$}}
    \thicklines
  \put(0,0){\line(1,0){3.66}}\put(0,0){\line(2,3){1.803}}
  \put(3.66,0){\line(-2,3){1.803}}
\end{picture}
\setlength{\unitlength}{.4cm}
\begin{picture}(6,4)
\put(2,2.8){\line(-3,-2){1.7}}
\put(2,2.8){\line(3,-2){1.7}}
\put(2,2.75){\makebox(0,0){${}^\clubsuit$}}
\put(2,0){\makebox(0,0){$\bullet$}}
\put(2.6,0.03){\makebox(0,0){$*$}}
  \qbezier(0,1)(0,2)(2,2) \qbezier(4,1)(4,2)(2,2)
  \thicklines
\qbezier(0,1)(0,0)(2,0)\qbezier(4,1)(4,0)(2,0)
\end{picture}
\caption{Two examples of folded simplices that can appear
in a complex of $\mathfrak{sing}$. The first one is obtained 
by identifying two vertices. The second is the quotient by a 
120-degrees rotation.}\label{fig1new}
\end{figure}
The topological space $\mathbf{X}$ is called the subjacent topological
space, or the realization, of the complex. A continuous map 
$\tau:\mathbf{X}\rightarrow\mathbf{X}'$ between 
two such realizations, is called simplicial if, for every 
singular simplex $\sigma$ in $\mathcal{X}$, the composition
$\tau\circ\sigma$ is a singular simplex in the corresponding
set $\mathcal{X}'$ for $\mathscr{X}'=(\mathcal{X}',\mathbf{X}')$. 
Note that the image of a 
singular simplex $\sigma$, under this rule, has the same dimension as 
$\sigma$, so the third condition in the definition of a singular complex
prevents us from having simplicial maps collapsing simplices. This is important for the 
definition of ramified covers. There is a category
$\mathfrak{sing}$ whose objects are singular complexes, and 
its morphisms are simplicial maps. 
For any set $\mathcal{Y}\subseteq
\mathcal{X}$ satisfying conditions $1$ and $2$ in the preceding definition,
there is an object $\mathscr{Y}$ in $\mathfrak{sing}$ whose realization is a subspace
$\mathbf{Y}$ of $\mathbf{X}$. This is called a subcomplex. In particular, the
image of a subcomplex under a simplicial map is also a subcomplex.
The advantage of working in 
this category is that any group action by simplicial maps on
a singular complex defines a quotient in the same category.
On the other hand, every simplicial complex defines a
singular complex $\mathscr{X}$ whose associated topological
space $\mathbf{X}$ is just the realization of the complex,
as defined at the beginning of this section.

Assume $\mathbf{B}$ is the realization of a simplicial complex
$\mathcal{B}$. For any simplex $\mathbf{T}$ of $\mathcal{B}$ there is
a canonical embedding $\phi_\mathbf{T}:\mathbf{T}_n\rightarrow\mathbf{B}$
that arises from the definition of $\mathbf{B}$ as a quotient space
of a disjoint union of simplices. Then a singular simplex 
$\sigma:\mathbf{T}_n\rightarrow\mathbf{B}$ is called combinatorial 
if it has the form $\phi_{\mathbf{T}}\circ\lambda$ for some simplex 
$\mathbf{T}$ of $\mathcal{B}$, where 
$\lambda:\mathbf{T}_n\rightarrow\mathbf{T}_n$ is induced by
a permutation of the vertices. The singular complex induced
by $\mathcal{B}$ is $\mathscr{B}=
(\widehat{\mathcal{B}},\mathbf{B})$, where
$\widehat{\mathcal{B}}$ is the collection of combinatorial
singular simplices for $\mathcal{B}$.
We specialize this to the case where $\mathcal{B}
=\mathcal{B}_P$ is the local Bruhat-Tits building, as defined 
in the previous section.

\begin{prop}
If a group $G$ acts by simplicial maps on a singular complex
$\mathscr{X}=(\mathcal{X},\mathbf{X})$, then the pair
$G\backslash\mathscr{X}=(G\backslash\mathcal{X},G\backslash
\mathbf{X})$, where $G\backslash\mathcal{X}$ is the set of all
compositions $\rho\circ\sigma$, where $\sigma$ is a simplex in
$\mathcal{X}$ and $\rho:\mathbf{X}\rightarrow 
G\backslash\mathbf{X}$ is the canonical projection, is a 
singular complex. 
\end{prop}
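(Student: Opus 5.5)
We verify the four axioms of a singular complex for $G\backslash\mathscr{X}=(G\backslash\mathcal{X},G\backslash\mathbf{X})$, where $\rho:\mathbf{X}\to G\backslash\mathbf{X}$ is the projection with the quotient topology. Since $\rho$ is continuous, every $\rho\circ\sigma$ is a singular simplex of the right dimension. The underlying fact used throughout is that each $g\in G$ acts by a simplicial homeomorphism $g:\mathbf{X}\to\mathbf{X}$. Hence $g\circ\sigma\in\mathcal{X}$ whenever $\sigma\in\mathcal{X}$, so $G$ permutes $\mathcal{X}$, and $g$ carries interiors of simplices to interiors and stars to stars.

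Axioms 1 and 2 are formal. For a face map $\phi$ and a permutation $\lambda$ we have $(\rho\circ\sigma)\circ\phi=\rho\circ(\sigma\circ\phi)$ and $(\rho\circ\sigma)\circ\lambda=\rho\circ(\sigma\circ\lambda)$. Here $\sigma\circ\phi$ and $\sigma\circ\lambda$ lie in $\mathcal{X}$ by axioms 1 and 2 for $\mathscr{X}$.

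For axiom 3, take $y=\rho(x)\in G\backslash\mathbf{X}$. By axiom 3 for $\mathscr{X}$, the point $x$ lies in the interior of some $\sigma$, so $y$ lies in the interior of $\rho\circ\sigma$; this gives existence. For uniqueness, suppose $y$ lies in the interiors of $\rho\circ\sigma$ and $\rho\circ\sigma'$. Then $\rho(\sigma(t))=\rho(\sigma'(t'))$ for interior points $t,t'$, so $\sigma'(t')=g\sigma(t)$ for some $g\in G$. Now $g\circ\sigma\in\mathcal{X}$ and $\sigma'$ both contain $\sigma'(t')$ in their interiors, so by uniqueness in $\mathscr{X}$ we get $\sigma'=g\circ\sigma\circ\lambda$ for a permutation $\lambda$. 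Therefore $\rho\circ\sigma'=\rho\circ\sigma\circ\lambda$. A subtle point is that the ambiguity on the quotient side is exactly permutations, possibly nontrivial ones fixing the image; this produces the folded simplices, and no further identification is needed.

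Axiom 4 is where the quotient topology enters, and it is the main obstacle. We must show that the star $\mathrm{St}(\rho\circ\sigma)$ is open, which amounts to showing that $\rho^{-1}\big(\mathrm{St}(\rho\circ\sigma)\big)$ is open in $\mathbf{X}$. We claim that
$$\rho^{-1}\big(\mathrm{St}(\rho\circ\sigma)\big)=\bigcup_{g\in G}\mathrm{St}(g\circ\sigma).$$
This union is open, since each $\mathrm{St}(g\circ\sigma)=g\,\mathrm{St}(\sigma)$ is open by axiom 4 and the homeomorphism property.

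For the inclusion $\supseteq$, let $x$ be interior to some $\tau$ having $g\circ\sigma$ as a face. Then $\rho\circ\tau$ has $\rho\circ\sigma$ as a face and $\rho(x)$ is interior to it.

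For $\subseteq$, let $\rho(x)$ be interior to some $\rho\circ\tau$ having $\rho\circ\sigma$ as a face, i.e.\ $\rho\circ\tau\circ\phi=\rho\circ\sigma$. Write $x=h\tau(t)$ with $t$ interior, using the argument from axiom 3. The hard step is lifting the face relation: from $\rho\circ(\tau\circ\phi)=\rho\circ\sigma$ we want $\tau\circ\phi=g\circ\sigma\circ\lambda$ for some $g$ and permutation $\lambda$. To do this, pick an interior point $s$ of the face. Then $\tau\phi(s)=g\sigma(s')$ for some $g$, and axiom 3 in $\mathscr{X}$ gives the equality of simplices up to a permutation $\lambda$. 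Absorbing $\lambda$ into $\phi$ by changing face map and permutation, we conclude that $\tau$, and hence $h\circ\tau$, has a $G$-translate of $\sigma$ as a face. This places $x$ in the union.
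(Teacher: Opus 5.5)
Your proof is correct and follows the paper's route. Axioms 1 and 2 are formal. Axiom 3 comes from lifting to $\mathbf{X}$, translating by some $g\in G$, and using uniqueness in $\mathscr{X}$. Axiom 4 holds because $\rho^{-1}$ of a star in the quotient equals $\bigcup_{g\in G}\mathrm{St}(g\circ\sigma)$, a union of open stars in $\mathscr{X}$. The paper only asserts this last identity; you prove it by lifting the face relation $\rho\circ\tau\circ\phi=\rho\circ\sigma$ to $\tau\circ\phi=g\circ\sigma\circ\lambda$ via axiom 3 at an interior point.
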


\begin{proof}
    The first two conditions in the definition of a singular 
    complex are immediate. Let $x$ be a point in $G\backslash\mathbf{X}$, and let
    $\bar{x}\in\mathbf{X}$ be any lifting. Then $\bar{x}$ is 
    in some image $\sigma(\mathbf{T}_n^o)$, whence $x\in
    \rho\circ\sigma(\mathbf{T}_n^o)$. On the other hand, if
    $x$ belongs to a second image of the form 
    $\rho\circ\sigma'(\mathbf{T}_m^o)$, then some lifting
    $\hat{x}$ of $x$ belongs to $\sigma'(\mathbf{T}_m^o)$.
    However, $\rho(\hat{x})=x=\rho(\bar{x})$ implies that
    $\hat{x}$ and $\bar{x}$ are in the same $G$-orbit, so
    $g\cdot\bar{x}=\hat{x}$ with $g\in G$, 
    whence $g\cdot\sigma$ must be a 
    permutation of $\sigma'$ by uniqueness, and therefore
    $\rho\circ\sigma$ must be a permutation of
    $\rho\circ\sigma'$. Condition 3 follows. The last condition 
    follows since the pre-image of a star in $G\backslash\mathscr{X}$
    is a union of stars in $\mathscr{X}$, and therefore it is open.
\end{proof}
\begin{Def}
The singular complex $G\backslash\mathscr{X}$ is referred to as the quotient singular complex in all that follows.
\end{Def}

\begin{ex}
    When a group $G$ acts on a simplicial complex 
    $\mathcal{B}$ in a way that the quotient complex 
    is well defined in the category of simplicial complexes, 
    then the quotient of the associated singular complex
    can be identified with the singular complex associated to the 
    quotient complex $G \backslash \mathcal{B}$. This
    is the case, for instance, if $\mathcal{B}$ is the Bruhat-Tits 
    building of $\mathrm{SL}_n(K_P)$ and $G$ is a subgroup of this special
    linear group, as it preserves vertex type, see 
    \cite[\S 6.9.2-6.9.3]{AbramenkoBrown}. However, the
    normalizer of an $\A$-order, the group $G$ that we use most 
    often here, fails to have this property in many cases, as we
    can see in the examples in \S\ref{section examples}.
\end{ex}

\begin{Def}\label{Dram}
 A simplicial map $\psi:\mathscr{S}\twoheadrightarrow\mathscr{C}$ is 
locally surjective at a simplex $\sigma$ of $\mathscr{C}$ if the star 
of any pre-image of $\sigma$ maps surjectively onto the star of $\sigma$.
A ramified cover $\psi:\mathscr{S}\twoheadrightarrow\mathscr{C}$ is a
simplicial map that is both surjective and locally surjective at every 
simplex. A point $x$ in the subjacent topological space $S$ of 
$\mathscr{S}$ is unramified if $\psi$ is injective in some neighborhood
of $x$. Note that this neighborhood need not be a star. 
\end{Def}

\begin{ex}
The simplicial map sending the real line $\mathscr{R}=
(\mathcal{R},\mathbb{R})$, with a vertex at every integer, onto the
unit circle $\mathscr{U}=(\mathcal{U},\mathbb{U})$ with a unique 
vertex at $1$, is indeed unramified at every point. See Fig. \ref{f3tg}.
\begin{figure}%[htbp]
  \centering
  \begin{tikzpicture}[
    scale=0.9,
    vertex/.style={circle, draw, fill=black, inner sep=0pt, 
    minimum size=5pt},
    edge/.style={thick},
    mapping/.style={-Stealth, line width=1.5pt, 
    shorten >= 10pt, shorten <= 10pt}
  ]
    % --- LEFT GRAPH: Infinite Path on Integers ---
    \begin{scope}[xshift=0cm]
      % Draw edges connecting integer vertices
      \draw[edge] (-3.5, 0) -- (3.5, 0);
      % Ellipses for infinity
      \node at (-4.2, 0) {$\dots$};
      \node at (4.2, 0) {$\dots$};
      % Vertices and labels at integer positions
      \foreach \x in {-3, -2, -1, 0, 1, 2, 3} {
        \node[vertex] (v\x) at (\x, 0) {};
        \node[below=4pt] at (\x, 0) {\small $v_{\x}$};}
    \end{scope}
    \node[below=4pt] at (0,1) {$\mathscr{R}$};
    \node[below=4pt] at (6,.5) {$\rightarrow$};
    % --- RIGHT GRAPH: Single Vertex with Loop ---
    \begin{scope}[xshift=8.5cm]
      % Vertex
      \node[vertex] (v) at (0, 0) {};
      % Self-loop on top
      \draw[edge] (v) to[out=45, in=135, looseness=12] (v);
      % Label for vertex
      \node[below=4pt] at (0, 0) {\small $v_0$};
      \node[below=4pt] at (.5,1) {$\mathscr{U}$};
    \end{scope}
  \end{tikzpicture}
  \caption{An example of a ramified cover that is 
  unramified at every point.}\label{f3tg}
\end{figure}
\end{ex}

\section{Quotient Buildings and conjugacy classes}\label{section Quotient Buildings and conjugacy classes}

In this section, we study the connection between vertices of a suitably
defined complex and the set of conjugacy classes of maximal $X$-orders. 
We need a couple of lemmas:

\begin{lem}\cite[Lemma 4.1]{ArenasAguiloSaavedra}\label{mainlem}
Let $G$ be a group.
Let $X$ and $Y$ be $G$-sets with $G$ acting transitively on $X$. 
Let $Z\subseteq X\times Y$
be a $G$-invariant subset. For every element $x\in X$, let $G_x=\mathrm{Stab}_G(x)$
and let $Y_x=\{y\in Y|(x,y)\in Z\}$. Then, there is a natural bijective correspondence $\phi$
between the orbit sets $G_x\backslash Y_x$ and $G\backslash Z$ sending the class 
of $y\in Y_x$ to the class of $(x,y)\in Z$.\qed
\end{lem}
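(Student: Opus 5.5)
The map $\phi$ is defined by $\phi(G_x y)=G(x,y)$. The plan is to check, in order, that $\phi$ is well defined, surjective and injective. Each step uses only the $G$-invariance of $Z$ and the transitivity of $G$ on $X$.

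\emph{Well defined.} Take $y\in Y_x$, so that $(x,y)\in Z$. For $h\in G_x$ we have $h\cdot(x,y)=(hx,hy)=(x,hy)$, which lies in $Z$ because $Z$ is $G$-invariant. Hence $hy\in Y_x$, so $G_x$ acts on $Y_x$ and the orbit set $G_x\backslash Y_x$ makes sense. The same computation shows that $(x,hy)$ and $(x,y)$ lie in the same $G$-orbit. Therefore the class of $(x,y)$ in $G\backslash Z$ depends only on the $G_x$-orbit of $y$.

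\emph{Surjective.} Let $(x',y')\in Z$. By transitivity there is $g\in G$ with $gx'=x$. Then $g\cdot(x',y')=(x,gy')$ belongs to $Z$, so $gy'\in Y_x$. Thus the class of $(x',y')$ is $\phi(G_x\,gy')$.

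\emph{Injective.} Suppose $(x,y)$ and $(x,y')$ lie in the same $G$-orbit, say $g(x,y)=(x,y')$. Comparing first coordinates gives $gx=x$, so $g\in G_x$. Comparing second coordinates gives $gy=y'$, so $y$ and $y'$ lie in the same $G_x$-orbit.

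None of these steps is a real obstacle. The only point needing care is the well-definedness step, and specifically the observation that $G_x$ actually preserves $Y_x$. Without that, the orbit set $G_x\backslash Y_x$ would not be defined.
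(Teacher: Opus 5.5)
Your proof is correct and complete. The paper gives no argument of its own; it quotes the lemma from an earlier reference. Your direct verification is the standard proof: well-definedness, including the check that $G_x$ preserves $Y_x$; surjectivity via transitivity and $G$-invariance of $Z$; and injectivity by comparing first coordinates to force $g\in G_x$.
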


In this work, we need to apply the preceding lemma in the case 
where $Y$ is the set of maximal $\mathcal{O}_P$-orders, $X$ is
a conjugacy-class of global maximal $\A$-orders, and $Z$ is the
cartesian product $X\times Y$,
which can be identified with the set of maximal $X$-orders whose
restrictions to $U=U_P$ fall in a suitable conjugacy class. In this case
$G_x$ is the normalizer of a maximal $\A$-order. In this
context, the instance of Lemma \ref{mainlem} that is relevant to us is 
the following:

\begin{lem}\label{ourmainl}
    The set of conjugacy classes of maximal $X$-orders whose restriction
    to $U_P=\mathrm{Spec}(\A)$ fall in a given conjugacy class $[D]$ are in
    correspondence with the orbits of local orders under the normalizer $N$
    of any representative $D$ of the class $[D]$. \qed
\end{lem}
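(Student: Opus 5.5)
We apply Lemma \ref{mainlem} with $G=\mathrm{GL}_n(K)$. Take $X$ to be the conjugacy class $[D]$ of maximal $\A$-orders, on which $G$ acts transitively by conjugation. Take $Y$ to be the set of maximal $\mathcal{O}_P$-orders in $\mathbb{M}_n(K_P)$, with $G$ acting by conjugation through the embedding $\mathrm{GL}_n(K)\subseteq\mathrm{GL}_n(K_P)$. Finally take $Z=X\times Y$, which is trivially $G$-invariant. The lemma then gives a bijection between $G_D\backslash Y_D=N\backslash Y$ and $G\backslash Z$, where $G_D=\mathrm{Stab}_G(D)=N$ is the normalizer of $D$ and $Y_D=Y$ because $Z$ is the full product.

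It remains to identify $G\backslash Z$ with the set of conjugacy classes of maximal $X$-orders whose restriction to $U_P$ lies in $[D]$. By the local-global principle \ref{LGdet}--\ref{LGvariant} for orders, as discussed in \S\ref{section BTB}, an $X$-order $\DD$ in $\mathbb{M}_n(K)$ is determined by the pair $\big(\DD(U_P),\DD_P\big)$. Conversely, any pair consisting of an $\A$-order and a local order at $P$ glues to an $X$-order. Indeed, the completions of $\DD(U_P)$ at the points $Q\neq P$, together with the prescribed $\DD_P$, form a family coherent with that of any $X$-lattice extending $\DD(U_P)$, since they differ from it only at $P$.

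Maximality is local. An $X$-order is maximal if and only if its restriction to the affine sets $U_P$ and a neighborhood of $P$ is maximal, which means $\DD(U_P)$ is a maximal $\A$-order and $\DD_P$ is a maximal local order. So the map $\DD\mapsto(\DD(U_P),\DD_P)$ is a bijection from maximal $X$-orders with $\DD(U_P)\in[D]$ onto $Z$.

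Finally, this bijection is $G$-equivariant. For $g\in\mathrm{GL}_n(K)$ we have $(g\DD g^{-1})(U_P)=g\DD(U_P)g^{-1}$ and $(g\DD g^{-1})_P=g\DD_Pg^{-1}$, because restriction and completion commute with conjugation by a global matrix. Hence $G$-orbits on $Z$ correspond to conjugacy classes of such $X$-orders. Composing with the bijection from Lemma \ref{mainlem} gives the statement. The correspondence sends the $N$-orbit of a local order $\DD_P$ to the class of the $P$-variant of $D$ with completion $\DD_P$.

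The only point needing care is the gluing step, namely that the glued object is again an order, and maximal. For this we invoke the order version of \ref{LGvariant} stated in \S\ref{section conventions}, together with the fact that maximality can be checked on the two pieces $U_P$ and $\mathrm{Spec}\,\mathcal{O}_P$.
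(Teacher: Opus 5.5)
Your proof is correct and follows the paper's route. The paper obtains this lemma as the instance of Lemma~\ref{mainlem} with $G=\mathrm{GL}_n(K)$, $X=[D]$, $Y$ the set of maximal $\mathcal{O}_P$-orders and $Z=X\times Y$ identified via the local-global principle with the maximal $X$-orders restricting into $[D]$, so that $G_D=N$ and $Y_D=Y$. You make explicit the identification of $Z$ (gluing, locality of maximality, equivariance) that the paper leaves implicit; for the gluing, coherence can be checked against $\mathbb{M}_n(\mathcal{O}_X)$, so no prior extension of $\mathfrak{D}(U_P)$ needs to be assumed.
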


Recall that the general linear group $\mathrm{GL}_n(K_P)$
acts by conjugation on the set of maximal $\mathcal{O}_P$-orders 
in the local algebra $\mathbb{M}_n(K_P)$. This can be
interpreted naturally as an action on the vertices of the local 
Bruhat-Tits building $\mathscr{B}_P$. As the action on orders 
is by conjugation, the action on vertices respects simplices
by definition \ref{st2}. It follows that the latter action 
extends to the whole building $\mathcal{B}_P$, and therefore
to the singular complex $\mathscr{B}_P=
(\hat{\mathcal{B}}_P, \mathbf{B}_P)$, as defined in the
preceding section. By applying Lemma \ref{ourmainl},
we can identify $N$-orbits of vertices with conjugacy classes of orders
whose restriction to $U$ fall in the class $[D]$. This is equivalent
to replacing $\mathscr{B}_P$ with the building $\mathscr{B}_P(D)$ whose
vertices correspond to global orders.

\begin{Not}
    The $N$-action on the topological realization 
    $\mathbf{B}_P(D)$ associated to $\mathcal{B}_P(D)$
    is denoted $x\mapsto g*x$, for $g\in N$. Then
    expressions like ``The $*$-stabilizers of a 
    subspace'' must be interpreted in terms of this action. 
    Note that this action is simplicial.
\end{Not}

\begin{Def}
    When $\mathcal{B}=\mathcal{B}_P(D)$ is the 
    Bruhat-Tits building
    defined from an $\A$-order $D$, we denote by 
    $\mathscr{B}_P(D)=
    \big(\hat{\mathcal{B}}_P(D),\mathbf{B}_P(D)\big)$ 
    the corresponding singular complex. Then we denote by 
    $\mathscr{C}_P(D)=
    N\backslash\mathscr{B}_P(D)$ the quotient by the 
    normalizer $N$ of $D$ in $\mathrm{GL}_n(K)$. This is 
    called the C-complex of $D$. This is a singular complex 
    that, in general, fails to be induced by a simplicial
    complex in the sense described in the preceding section,
    as it is apparent in the examples in 
    \cite[Ex. 5.8 \& 
    Ex. 8.1-8.3]{ARENASCARMONA2014258}. 
    In later computations
    we use also the S-complex, which is defined 
    as $\mathscr{S}_P(D)=
    \Gamma\backslash\mathscr{B}_P(D)$, for the 
    subgroup $\Gamma=K^*D^*$. Note that $\Gamma$ preserves 
    the type of vertices, whence the S-complex is indeed
    induced by a simplicial complex. 
\end{Def}

We define the ad\`ele ring $\mathbb{A}$ of $X$
as the subring of $\prod_{P \in |X|} K_P$ 
consisting of the tuples $a=(a_{P})_{P}$ 
where $a_P$ lies in the ring $\mathcal{O}_{P}$ 
for all but finitely many closed
points $P$. This ring is endowed with the 
only topology for which the product ring
$\mathcal{O}_\mathbb{A}=\prod_{P \in |X|} 
\mathcal{O}_{P}$ is an open subring.
We also define the id\`ele group $\mathbb{I}$ 
as the group of invertible ad\`eles 
$\mathbb{A}^{*}$. 
Given a finite dimensional vector space $W$ over 
$K$, we define its adelization $W_{\mathbb{A}}$ 
as $W_{\mathbb{A}}=W \otimes_{K} \mathbb{A}$.
In particular, this definition applies to 
$W=\text{End}_K(V)$.
We also define the adelization of a lattice 
$\Lambda$ by $\Lambda_{\mathbb{A}} = 
\prod_{P \in |X|} \Lambda_P$,
which is an open and compact 
subgroup of $V_{\mathbb{A}}$.
Given an arbitrary $X$-lattice $\Lambda$ and an 
adelic matrix  
$a\in \mathrm{End}_{\mathbb{A}}(V_\mathbb{A})= 
\big(\text{End}_K(V)\big)_{\mathbb{A}},$
we define the adelic image $L=a\Lambda$ of 
$\Lambda$ as the unique $X$-lattice satisfying 
$L_{\mathbb{A}}=a\Lambda_{\mathbb{A}}$.
This is possible because of Property
\ref{LGvariant} of the local-global principle 
for lattices. Since $X$-lattices are locally
free, the group $\mathrm{Aut}_{\mathbb{A}}(V_\mathbb{A})$ 
acts  transitively on the set of $X$-lattices in $V$.
In particular, we have next result:

\begin{lem}\label{lemma adeconj}
Let $\DD$ and $\DD'$ be two 
maximal $X$-orders in $\mathbb{M}_n(K)$. Then, 
there exists $a \in \mathrm{GL}_n(\mathbb{A})$ 
such that $\DD'=a \DD a^{-1}$. \qed
\end{lem}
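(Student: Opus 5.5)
The plan is to reduce the statement to the transitivity of $\mathrm{GL}_n(\mathbb{A})$ on $X$-lattices in $K^n$, which the paper records just before the lemma, and then pass from lattices to their endomorphism sheaves.

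\textbf{Step 1: write both orders as endomorphism sheaves.} By the conventions of \S\ref{section conventions}, every maximal $X$-order in $\mathbb{M}_n(K)$ has the form $\DD_\Lambda=\mathfrak{End}(\Lambda)$ for some $X$-lattice $\Lambda\subseteq K^n$. So I write $\DD=\DD_\Lambda$ and $\DD'=\DD_{\Lambda'}$.

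\textbf{Step 2: choose the adelic matrix.} Since $X$-lattices are locally free, each completion $\Lambda_Q$ and $\Lambda'_Q$ is a free $\oo_Q$-module of rank $n$. Hence there are $a_Q\in\mathrm{GL}_n(K_Q)$ with $a_Q\Lambda_Q=\Lambda'_Q$. By \ref{LGequals}, $\Lambda_Q=\Lambda'_Q$ for all but finitely many $Q$, and at those points I take $a_Q=1$. With this choice, $a=(a_Q)_Q$ is a genuine element of $\mathrm{GL}_n(\mathbb{A})$, and its components lie in $\mathrm{GL}_n(\oo_Q)$ almost everywhere. By \ref{LGdet}, the adelic image satisfies $a\Lambda=\Lambda'$.

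\textbf{Step 3: pass from lattices to orders.} I check locally that
\[
\big(\DD_{a\Lambda}\big)_Q=\mathrm{End}(a_Q\Lambda_Q)=a_Q\,\mathrm{End}(\Lambda_Q)\,a_Q^{-1}
\]
for every closed point $Q$. The family $\{a_Q(\DD_\Lambda)_Q a_Q^{-1}\}_Q$ therefore defines, through the local-global principle for orders, an order that the paper denotes $a\DD a^{-1}$. By \ref{LGdet} this order coincides with $\DD_{\Lambda'}=\DD'$.

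The only point needing care is Step 2: the local matrices must be chosen trivial almost everywhere so that $a$ really is an idèle matrix. This is exactly what \ref{LGequals} provides, so I expect no serious obstacle.
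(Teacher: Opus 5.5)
Your proof is correct and follows the paper's route. The paper derives the lemma from two facts: every maximal $X$-order is $\mathfrak{End}(\Lambda)$ for some $X$-lattice $\Lambda$, and $\mathrm{GL}_n(\mathbb{A})$ acts transitively on $X$-lattices because they are locally free. You simply spell out the details the paper leaves implicit, namely taking $a_Q=1$ at the almost all points where $\Lambda_Q=\Lambda'_Q$, and the local identity $\mathrm{End}(a_Q\Lambda_Q)=a_Q\,\mathrm{End}(\Lambda_Q)\,a_Q^{-1}$.
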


In general, if we fix an $X$-order $\mathfrak{H}$ 
of maximal rank, then we can define the genus 
$\mathrm{gen}(\mathfrak{H})$ of $\mathfrak{H}$ as 
the set of all $X$-orders of the form $a 
\mathfrak{H} a ^{-1}$, for $a \in 
\mathrm{GL}_n(\mathbb{A})$. In this context,
the preceding lemma states that all maximal orders
belong to a single genus. The general strategies that we use 
to study maximal orders extend often to other genera.
Classifying orders in a genus can be simplified
by introducing the concept of spinor genus.

\begin{Def}\label{def spinor equivalence}
Two maximal orders $\DD$ and $\DD'$ are called spinor equivalent if there
exist a global matrix $g \in \mathrm{GL}_n(K)$ and an adelic matrix $a \in \mathrm{GL}_n(\mathbb{A})$ with determinant $1$ such 
that $\DD'=ga\DD a^{-1}g^{-1}$. An equivalence class under this relation is called 
a spinor genus. 
\end{Def}

 Over affine curves, the conjugacy classes of maximal orders can be 
 identified with the spinor genera according to the following result:

\begin{lem}\label{afinsgescl}
If $C$ is an affine curve, then two maximal orders are 
conjugate if and only if they
are spinor equivalent.
\end{lem}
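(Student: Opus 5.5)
Conjugate orders are trivially spinor equivalent, taking $a=1$. So the content of the lemma is the converse: if $\DD'=ga\DD a^{-1}g^{-1}$ with $g\in\mathrm{GL}_n(K)$ and $\det(a)=1$, then $\DD'$ is globally conjugate to $\DD$. Replacing $\DD'$ by $g^{-1}\DD'g$, we may assume $g=1$. It then suffices to show that $a\DD a^{-1}$ is conjugate to $\DD$ whenever $a\in\mathrm{SL}_n(\mathbb{A}_C)$, where $\mathbb{A}_C$ denotes the adèles over the places of the affine curve $C$.

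The plan is to translate orders into lattices. As recalled in \S\ref{section conventions}, every maximal order over the Dedekind domain $\oo(C)$ is $\DD_\Lambda=\mathrm{End}(\Lambda)$ for a lattice $\Lambda$ in $K^n$, determined up to multiplication by an invertible ideal. Moreover, $a\DD_\Lambda a^{-1}=\DD_{a\Lambda}$, where $a\Lambda$ is the adelic image. By Steinitz's structure theorem over a Dedekind domain, $\Lambda\cong \oo(C)^{n-1}\oplus I$, so its isomorphism class is determined by the Steinitz class $[I]\in\mathrm{Pic}(C)$. Adelically, we can write $\Lambda=b\,\oo(C)^n$ with $b\in\mathrm{GL}_n(\mathbb{A}_C)$, and the Steinitz class is the ideal class of $\det b$, i.e.\ the image of $\det b$ in $\mathbb{I}_C/K^*\prod_Q\oo_Q^*\cong\mathrm{Pic}(C)$. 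Since $a\Lambda=(ab)\,\oo(C)^n$ and $\det(ab)=\det b$, the lattices $a\Lambda$ and $\Lambda$ have the same Steinitz class. Hence there is an $\oo(C)$-module isomorphism $a\Lambda\cong\Lambda$, which is given by some $h\in\mathrm{GL}_n(K)$ with $h\Lambda=a\Lambda$. Consequently $\DD_{a\Lambda}=h\DD_\Lambda h^{-1}$, which proves the claim.

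The main point requiring care is the identification of the Steinitz class with the determinant of the adelic matrix. I would verify this locally. At each place $Q$ we have $\Lambda_Q=b_Q\oo_Q^n$, so $\det\Lambda_Q$, the top exterior power, equals $\det(b_Q)\oo_Q$. The rank one lattice $\bigwedge^n\Lambda$ therefore corresponds to the idèle $\det b$. Two lattices are isomorphic if and only if their top exterior powers are isomorphic, by Steinitz, and these top powers are isomorphic exactly when the idèles agree modulo $K^*\prod_Q\oo_Q^*$. Here it is essential that $C$ is affine: there are no places at infinity to track, and $\oo(C)$ is Dedekind, so Steinitz applies. This is precisely the hypothesis that fails for the projective curve $X$, where the genus splits into several conjugacy classes per spinor genus.
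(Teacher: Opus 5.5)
Your proof is correct, but it takes a different route from the paper. The paper never passes to lattices. Writing $D=cD_0c^{-1}$, it notes that the set of adelic matrices $b$ with $bDb^{-1}=aDa^{-1}$ is a coset of the adelic normalizer of $D$, and that this coset is open. It then invokes strong approximation for $\mathrm{SL}_n$, with respect to the places missing from the affine curve, to find a global $h$ (even one with $\det h=1$) in that open set. Affineness enters only through the nonemptiness of that set of missing places. You replace strong approximation by the Steinitz classification of finitely generated projective modules over the Dedekind domain $\oo(C)$. The condition $\det a=1$ forces $\bigwedge^n(a\Lambda)=\bigwedge^n\Lambda$ inside $K$, hence $a\Lambda\cong\Lambda$, and this isomorphism extends to some $h\in\mathrm{GL}_n(K)$. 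Your argument is more elementary, and it does not need $K$ to be a global field, whereas the strong approximation theorem cited in the paper is stated for global fields. It also uses only that the class of $\det a$ in $\mathrm{Pic}(C)$ is an $n$-th power, since $\DD_{a\Lambda}=\DD_{Ja\Lambda}$ for every invertible ideal $J$. So the same computation classifies conjugacy classes of maximal $\oo(C)$-orders directly by $\mathrm{Pic}(C)/\mathrm{Pic}(C)^n$, which is what Lemmas \ref{afinsgescl} and \ref{lemma spinor genus} give together in the affine case. The paper's argument, in turn, uses nothing about maximal orders beyond the openness of their adelic normalizers, so it applies verbatim to any genus of orders. An example is Eichler orders $\mathrm{End}(\Lambda_1)\cap\mathrm{End}(\Lambda_2)$: there one needs a simultaneous isomorphism of a pair of lattices, and Steinitz's theorem alone does not give it.
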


\begin{proof}
It is immediate that conjugate orders are spinor
equivalent, so we prove the converse. Let $\A'=
\oo_C[C]$ be the ring of regular functions of $C$.
Assume the maximal $\A'$-orders $D$ and $D'$ are 
spinor equivalent, say $D'=gaDa^{-1}g^{-1}$ for a
global matrix $g$ and an adelic matrix $a$ with 
determinant $1$. It suffices to prove that $a$ 
can be replaced by a global matrix $h$ satisfying 
$aDa^{-1}=hDh^{-1}$.
Write $D=cD_0c^{-1}$, where $D_0=
\mathbb{M}_n(\A')$, as before. Note that the 
normalizer of $D_P$ is open locally everywhere, 
and it contains $\mathrm{GL}_n(\oo_P)$ at 
every place $P$ where 
$c_P\in\mathrm{GL}_n(\oo_P)$. We conclude that 
the set of adelic matrices $b$ satisfying 
$aDa^{-1}=bDb^{-1}$ is open in the adelic 
topology. Then the result follows by the
strong approximation theorem for 
$G=\mathrm{SL}_n$ 
\cite[Th. 7.12]{PlatonovRapinchuk}.
\end{proof}

The spinor genera of maximal orders can be determined in terms of the Picard group as follows:

\begin{lem}\label{lemma spinor genus}
Let $C$ be an affine or projective curve. The set of spinor genera of maximal
C-orders is naturally in correspondence with the quotient group
$\mathrm{Pic}(C)/\mathrm{Pic}(C)^{n}$.
\end{lem}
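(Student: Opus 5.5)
We will build a map from maximal $C$-orders to $\mathrm{Pic}(C)/\mathrm{Pic}(C)^n$ and show that its fibers are exactly the spinor genera. Fix a reference maximal order $\DD_0=\DD_{\Lambda_0}$, for instance $\DD_0=\mathbb{M}_n(\oo_C)$ with $\Lambda_0=\oo_C^n$. By Lemma \ref{lemma adeconj} (or its affine analog), every maximal order is $\DD=a\DD_0a^{-1}$ for some $a\in\mathrm{GL}_n(\mathbb{A})$. To it we attach the class of the idèle $\det(a)$ in $\mathbb{I}/K^*\mathcal{O}_{\mathbb{A}}^*\cong\mathrm{Pic}(C)$, taken modulo $n$-th powers. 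For an affine curve we use only the idèles over the closed points of $C$, which gives the Picard group of the Dedekind domain $\oo_C[C]$. Equivalently, writing $\DD=\DD_\Lambda$, we attach the class of the determinant bundle $\det\Lambda\otimes(\det\Lambda_0)^{-1}$ modulo $\mathrm{Pic}(C)^n$.

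\textbf{Well-definedness.} The $a$ with $a\DD_0a^{-1}=\DD$ form a coset $a\,\mathrm{Norm}_{\mathrm{GL}_n(\mathbb{A})}(\DD_0)$. Locally, the normalizer of $\mathbb{M}_n(\oo_P)$ is $K_P^*\mathrm{GL}_n(\oo_P)$, since a maximal local order determines its lattice up to a principal ideal (\S\ref{section conventions}). Hence the normalizer of $\DD_0$ is $\mathbb{I}\,\mathrm{GL}_n(\mathcal{O}_{\mathbb{A}})$, and its determinants are $\mathbb{I}^n\mathcal{O}_{\mathbb{A}}^*$. These die in $\mathrm{Pic}(C)/\mathrm{Pic}(C)^n$, so the map is well defined. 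It is surjective, because $\mathrm{diag}(t,1,\dots,1)$ with $t$ an arbitrary idèle realizes every class.

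\textbf{The fibers are the spinor genera.} First, spinor-equivalent orders have the same invariant. If $\DD'=gb\DD b^{-1}g^{-1}$ with $\det b=1$ and $\DD=a\DD_0a^{-1}$, then $\DD'=(gba)\DD_0(gba)^{-1}$. We have $\det(gba)=\det(g)\det(a)$, and $\det g\in K^*$ is trivial in $\mathrm{Pic}$.

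Conversely, suppose $\DD=a\DD_0a^{-1}$ and $\DD'=a'\DD_0a'^{-1}$ have the same invariant. Then $\det(a')=\det(a)\,f\,u\,t^n$ with $f\in K^*$, $u\in\mathcal{O}_{\mathbb{A}}^*$ and $t\in\mathbb{I}$. Put
\[
m=t\cdot\mathrm{diag}(u,1,\dots,1),
\]
which lies in $\mathrm{Norm}(\DD_0)$, and let $g=\mathrm{diag}(f,1,\dots,1)\in\mathrm{GL}_n(K)$. Replacing $a'$ by $a'm^{-1}$ does not change $\DD'$, so we may assume $\det a'=f\det a$. Set
\[
b=g^{-1}a'a^{-1}.
\]
Then $\det b=1$ and $gb\DD b^{-1}g^{-1}=a'\DD_0a'^{-1}=\DD'$. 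One point needs care: $b$ is conjugated through $a$, and that is harmless because $b$ is applied to $\DD$ itself, not to $\DD_0$.

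\textbf{Main obstacle.} The delicate step is the bookkeeping in the converse: we must absorb both the unit $u$ and the $n$-th power $t^n$ into the normalizer, while keeping the global factor $g$ separate. The crucial input is the identification of the local normalizer as $K_P^*\mathrm{GL}_n(\oo_P)$; it is exactly why the quotient is by $\mathrm{Pic}(C)^n$ and not by something smaller.

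For affine $C$ we also need to restrict the adelic description to the places of $C$. Alternatively, in that case we can combine the argument with Lemma \ref{afinsgescl} and the Steinitz classification: a lattice is $\oo_C^{n-1}\oplus I$, and its conjugacy class of endomorphism orders depends on $[I]$ modulo $n$-th powers.
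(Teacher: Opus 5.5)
Your proposal is correct and takes essentially the same approach as the paper. In both, $\DD=a\DD_0a^{-1}$ is attached to the class of $\det a$ modulo $K^*\mathcal{O}_{\mathbb{A}}^*\mathbb{I}^n$, well-definedness comes from the local normalizer being $K_P^*\mathrm{GL}_n(\oo_P)$, and the converse absorbs the unit and the $n$-th power into the normalizer of $\DD_0$ and the global factor into $g$. Yours is only slightly more explicit: you prove surjectivity and keep $g$ separate instead of modifying $c$.
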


\begin{proof}
Let $\DD_0=\mathbb{M}_n(\oo_C)$, and 
write $\DD=b\DD_0b^{-1}$ and $\DD'=c\DD_0c^{-1}$, where $b$ and $c$ are 
adelic matrices. 
Since two maximal orders are always adelically conjugate according to Lemma \ref{lemma adeconj}, these are any two maximal orders.
It suffices to prove that they are in the same spinor genus if and only if the image of $\det(cb^{-1})$ in $\mathrm{Pic}(C)$ is an $n$-th power.
Since the local normalizer of an order $\DD_P$ is $K_P^*\DD_P^*$, and the determinant of any element in $\DD_P^*$ is a unit, the condition is independent of the choice of $b$ and $c$.
If $\DD$ and $\DD'$ are spinor equivalent, then we can assume $cb^{-1}=ga$, with $g$ and $a$ as in Def. \ref{def spinor equivalence}.
Then the determinant of $a$ is trivial and the determinant of $g$ spans 
a principal ideal, so the conclusion is immediate. 
If the image of $\det(cb^{-1})$ in $\mathrm{Pic}(C)$ is an $n$-th power, then $\det(cb^{-1})=rsh^n$ where $r\in K^*$, $s$ is a unit, locally at all places, and $h$ is an arbitrary id\`ele. We can post-multiply $b$ by an element centralizing $\DD_0$ with norm $sh^n$, and pre-multiply $c$ by a global matrix with determinant $r$, so that we can assume that $\det(cb^{-1})=1$. We conclude that $\DD$ and $\DD'$ are
spinor equivalent.
\end{proof}

\begin{proof}[Proof of Th. \ref{mainth0}]
By Lemma \ref{ourmainl}, the vertex set of $\mathscr{C}_P(D)$ is in bijection 
with the conjugacy classes of maximal orders in $\mathbb{M}_n(K)$ whose 
restrictions to $U_P$ are conjugate of $D$. Thus the first statement follows.
For the second statement, 
assume that $k$ is finite.
We need to see that there are precisely
$d$ spinor genera of maximal $X$-orders whose restriction to $U_P$ is in
the spinor genus of $D$, since two maximal $\A$-orders are conjugate if and only 
if they belong to the same spinor genus. Now the spinor genera of maximal
$X$-orders are in correspondence with the quotient 
$\mathrm{Pic}(X)/\mathrm{Pic}(X)^n$. On the other hand, the spinor genera
of maximal $\A$-orders are in correspondence with the quotient
\begin{equation}\label{fpica}
    \mathrm{Pic}(\A)/\mathrm{Pic}(\A)^n\cong\mathrm{Pic}(X)/\left(\left\langle[P]_X\right\rangle+\mathrm{Pic}(X)^n\right),
\end{equation}
where $[P]_X$ is the class of $P$, as a divisor, in $\mathrm{Pic}(X)$,
whence the result follows.
\end{proof}

Next result is useful in order to prove Th. \ref{mainth3} in 
\S \ref{ch4}. This is actually shown in the proof of 
\cite[Th. 1.2]{ArenasBordeaux}, see
\cite[p. 696]{ArenasBordeaux}. For the sake of completeness, we
prove a lemma with the precise statement that is needed here.

\begin{lem}\label{lemma N/G}
The group $N/\Gamma$ is isomorphic
to the annihilator of $n$ in the abelian group $\mathrm{Pic}(\A)$.
\end{lem}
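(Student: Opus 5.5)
We will reduce to the case $D=\cdots$ by conjugation, but it is cleaner to work with a general maximal $\A$-order $D=\mathfrak{End}(L)$, where $L$ is an $\A$-lattice in $K^n$. The plan is to construct a homomorphism $\Phi:N\to\mathrm{Pic}(\A)[n]$, where $\mathrm{Pic}(\A)[n]$ denotes the annihilator of $n$, show that its kernel is $\Gamma=K^*D^*$, and show that it is surjective.

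\textbf{Construction of $\Phi$.} For $g\in N$, the lattice $gL$ satisfies $\mathrm{End}(gL)=gDg^{-1}=D=\mathrm{End}(L)$. Since the analogue of the homothety statement of \S\ref{section conventions} holds over the Dedekind domain $\A$, we get $gL=I_gL$ for a unique fractional ideal $I_g$ of $\A$. We set $\Phi(g)=[I_g]\in\mathrm{Pic}(\A)$.

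To see that $\Phi$ is a homomorphism, note that $ghL=gI_hL=I_h\,gL=I_hI_gL$, since $g$ is $K$-linear and therefore commutes with multiplication by ideals.

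To see that the image lies in the $n$-torsion, take determinants, i.e.\ top exterior powers: $\det(gL)=\det(g)\det(L)$ and $\det(I_gL)=I_g^n\det(L)$, both as fractional ideals of $\A$ inside $\bigwedge^nK^n\cong K$. Hence $I_g^n=\det(g)\A$ is principal, so $[I_g]^n=1$. This can equally be checked locally at each $Q\in U_P$.

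\textbf{The kernel is $\Gamma$.} If $g=\lambda u$ with $\lambda\in K^*$ and $u\in D^*$, then $uL=L$, so $I_g=\lambda\A$ is principal. Conversely, if $I_g=\lambda\A$, then $\lambda^{-1}g$ maps $L$ onto $L$, so it lies in $D^*$, and therefore $g\in K^*D^*$.

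\textbf{Surjectivity.} This is the step I expect to be the main obstacle. Let $I$ be an ideal with $I^n=f\A$ principal. We need $g\in\mathrm{GL}_n(K)$ with $gL=IL$, which amounts to showing that $IL\cong L$ as $\A$-modules. By Steinitz's structure theorem for finitely generated projective modules over the Dedekind domain $\A$, the isomorphism class of a rank $n$ lattice is determined by its Steinitz class, namely the class of $\det$ in $\mathrm{Pic}(\A)$. We have $[\det(IL)]=[I]^n[\det L]=[\det L]$, so $IL\cong L$. Any such isomorphism extends to some $g\in\mathrm{GL}_n(K)$ with $gL=IL$. Then $gDg^{-1}=\mathrm{End}(IL)=D$, so $g\in N$ and $\Phi(g)=[I]$.

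The only points needing care are that the isomorphism extends $K$-linearly, which holds by tensoring with $K$, and that $I_g$ is well defined. The latter follows because $IL=I'L$ forces $I=I'$, as one sees by comparing completions at each $Q\in U_P$, where $L_Q$ is free. Combining the three steps gives $N/\Gamma\cong\mathrm{Pic}(\A)[n]$.
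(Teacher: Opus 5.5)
Your proposal is correct and follows essentially the same route as the paper. Both define the homomorphism $g\mapsto[I_g]$ via $gL=I_gL$, use determinants to get $I_g^n=\det(g)\A$, identify the kernel with $K^*D^*$, and obtain surjectivity from $IL\cong L$ whenever $I^n$ is principal. The differences are minor: you justify $IL\cong L$ explicitly through Steinitz classes, which the paper asserts without comment, and you verify the kernel directly from $\lambda^{-1}gL=L$ rather than by showing $\det(\lambda^{-1}g)$ is a unit locally.
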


\begin{proof}
For every $a\in N$ the principal ideal generated by
the determinant of $a$ is an $n$-th power locally at all 
places outside of $P$. Indeed, since $D$ is a maximal $\A$-order,  
it must have the form $D=\mathrm{End}_{\A}(L)$ for some full rank 
$\A$-lattice $L$. Since $N$ is the normalizer $N_{\mathrm{GL}_n(K)}(D)$,
we can write $a L = J L$, for some fractional $\A$-ideal $J$. Writing
$J_Q=b_Q\mathcal{O}_Q$ and taking the determinant, we obtain 
$\det (a) \det (L_Q)= b_Q^n \det (L_Q)$, for all $Q \neq P$. Therefore, $J^n$ is 
generated by the global element $\det a$.
The map $\phi$ sending $a$  to 
the ideal class $[J]$ of $J$ is a group homomorphism from
$N$ to the Picard group $\mathrm{Pic}(\A)$. Since 
$\phi(a)^n=\left[J^n\right]=[\det a]$, then $\phi(a)^n$ is 
the trivial class. Furthermore, for any ideal $I$, $D$
is also the endomorphism ring of $IL$. If $I^n$ is principal, 
then $IL$ is isomorphic to $L$ as an $\A$-module, whence there is a 
matrix $c$ normalizing $D$ that satisfies $cL=IL$. It follows that 
$\det c$ generates $I^n$. We conclude that the image of $\phi$ is
the annihilator of $n$ in the Picard group. On the other hand, 
if $\phi(a)$ is the trivial class, then $aL=IL$, where 
$I=\lambda \A$  is principal, and therefore the determinant
of $\lambda^{-1}a$ is a unit. We conclude that $\lambda^{-1}a$
belongs to $D^*$ locally at all places, and therefore also
globally.
\end{proof}

In the proof of Th. \ref{mainth-1} in \S \ref{ch4}, we need a finiteness result
that is proved in Prop. \ref{FBA1} below. Before proving it we need a
technical lemma.

\begin{lem}\label{lemma610}
Let $X$ be a smooth, projective, geometrically integral curve
defined over a finite field. Let $n\geq1$ be an integer. Then
there is a finite number of indecomposable vector bundles of rank $n$
or less over $X$, up to tensor product with $\LL^{rP}$ for $r\in\mathbb{Z}$.
\end{lem}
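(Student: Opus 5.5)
We argue by induction on the rank $n$, combining a slope bound for indecomposable bundles with the finiteness of bundles of bounded degree and bounded instability. Twisting by $\LL^{rP}$ changes the degree of a rank $m$ bundle $E$ by $rm\deg(P)$. Hence, up to such twists, we may assume $0\le \deg E< m\deg(P)$. It therefore suffices to show two things. First, for each $m\le n$ and each fixed degree $e$ there are only finitely many isomorphism classes of indecomposable rank $m$ bundles of degree $e$. Second, only finitely many degrees need to be considered, which the normalization above already guarantees.

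The key step is a bound on the instability of indecomposable bundles. Let $E$ be indecomposable of rank $m$ and let $0=E_0\subsetneq E_1\subsetneq\cdots\subsetneq E_t=E$ be its Harder–Narasimhan filtration, with slopes $\mu_1>\cdots>\mu_t$. We claim that consecutive slopes satisfy $\mu_i-\mu_{i+1}\le 2g-2$, up to a constant depending only on $X$ and $m$. Suppose instead that some gap is large. Then $\mathrm{Ext}^1(E/E_i,E_i)\cong H^1\big((E/E_i)^\vee\otimes E_i\big)$ is Serre dual to $\mathrm{Hom}\big(E_i,(E/E_i)\otimes\omega_X\big)$. This Hom space vanishes, because every quotient of $E_i$ has slope at least $\mu_i$, every subsheaf of $(E/E_i)\otimes\omega_X$ has slope at most $\mu_{i+1}+2g-2$, and $\mu_i>\mu_{i+1}+2g-2$ by assumption. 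So the extension splits, $E\cong E_i\oplus E/E_i$, contradicting indecomposability. It follows that $\mu_{\max}(E)-\mu_{\min}(E)\le (m-1)(2g-2)$, and hence $\mu_{\max}(E)\le \mu(E)+C$ for a constant $C=C(g,m)$.

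Next we bound the family. With the degree fixed and $\mu_{\max}(E)$ bounded above, every semistable piece of $E$ has bounded slope. We then invoke the standard finiteness over a finite field. Semistable bundles of fixed rank and degree form a bounded family, for instance because after a fixed twist they become globally generated with vanishing $H^1$, so they are quotients of a fixed trivial bundle. Such quotients are parametrized by a Quot scheme of finite type over the finite field $k$, which has only finitely many $k$-points. Extensions of bounded pieces are likewise finite in number, since the relevant $\mathrm{Ext}^1$ groups are finite-dimensional $k$-vector spaces and $k$ is finite. Thus the bundles with fixed rank $m$, fixed degree $e$ and bounded $\mu_{\max}$ are finite in number up to isomorphism, and in particular so are the indecomposable ones. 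Alternatively, one can avoid Harder–Narasimhan theory: bound $h^0(E\otimes \LL^{-D})$ using the slope bound, embed $E$ as an extension of line bundles by induction on rank, and count line bundles of bounded degree via the finiteness of $\mathrm{Pic}^0(X)(k)$.

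The main obstacle is the splitting argument. Making it rigorous requires care with Serre duality and the slope comparison, in particular making sure the Hom space really vanishes. For $g=0$ the inequality gives gap $<0$, which forces the Harder–Narasimhan filtration to be trivial, consistent with Birkhoff–Grothendieck. Once the uniform slope bound is established, the finiteness count is routine.
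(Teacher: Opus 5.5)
Your proposal is correct and follows essentially the same route as the paper: twist by $\LL^{rP}$ to confine $\deg E$ to a finite window, use the Harder–Narasimhan slope constraint for indecomposable bundles, and conclude with the finiteness over a finite field of sheaves of fixed rank and degree with bounded HN slopes. The only difference is that you prove the two ingredients the paper simply cites from Schiffmann, namely the gap bound $\mu_i-\mu_{i+1}\le 2g-2$ via Serre duality and the finiteness via Quot schemes and finite $\mathrm{Ext}^1$ groups. One small correction: your bound $\mu_{\max}-\mu_{\min}\le (m-1)(2g-2)$ should read $(t-1)(2g-2)$, since as written it fails for $g=0$ and $m\ge 2$; you already treat that case separately, so nothing else is affected.
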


\begin{proof}
Let $g$ be the genus of $X$, and let $E$ be an indecomposable vector bundle
of rank $m\leq n$. Tensoring with $\mathcal{L}^{rP}$ changes the degree of $E$
by $rm\deg P$ and preserves indecomposability, so we may assume that 
$(g-1)m(m-1)<\deg E\leq (g-1)m(m-1)+m\deg P .$
This leaves finitely many possibilities for the pair $(m,\deg E)$. For each
of them, every indecomposable bundle of that rank and degree has all 
its Harder-Narasimhan slopes \cite{Harder-Narasimhan} non-negative 
\cite[Prop.~2.1]{Schiffmann}, and
there are only finitely many isomorphism classes of coherent sheaves of a
given rank and degree with this property \cite[\S 2.1, p.~307]{Schiffmann}.
\end{proof}

\begin{prop}\label{FBA1}
    The set of non-split maximal orders in the matrix algebra $\mathbb{M}_n(K)$
    corresponds to the vertices of a combinatorially finite complex whose 
    dimension is lower than $n-1$.
\end{prop}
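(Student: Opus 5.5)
We argue by induction on $n$, using Lemma \ref{lemma610} as the source of finiteness. A maximal order $\DD_\Lambda$ is non-split exactly when the rank $n$ bundle $\Lambda$ is not a sum of line bundles. By Krull--Schmidt for vector bundles on a projective curve, $\Lambda$ then decomposes as $\Lambda=E_1\oplus\cdots\oplus E_t$ with each $E_i$ indecomposable and at least one of rank $\geq 2$, so $t\leq n-1$. Since $\DD_\Lambda$ depends only on the homothety class of $\Lambda$, and tensoring with $\LL^{rP}$ does not change the restriction to $U_P$, Lemma \ref{lemma610} gives finitely many ``shapes'' $(E_1^0,\dots,E_t^0)$. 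These are indecomposables up to twist by powers of $\LL^{P}$, and every non-split $\Lambda$ has the form $\bigoplus_i \LL^{r_iP}E_i^0$ with $(r_1,\dots,r_t)\in\ZZ^t$, modulo the diagonal. So for each shape the non-split vertices of that shape form a family indexed by $\nabla_{\ZZ}$ in rank $t$, an affine lattice of dimension $t-1\leq n-2<n-1$.

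Next we realize each family geometrically. Fix a shape, pick any local basis at $P$ adapted to the decomposition, and take the vertices $\DD_{\bigoplus \LL^{r_iP}E_i^0}$. These lie in the subcomplex of an apartment of $\mathscr{B}_P(D)$ made of lattices $\bigoplus\pi_P^{r_i}(E_i^0)_P$. This is a copy of the building-type tessellation of $\mathbb{R}^{t-1}$ obtained by restricting the apartment to the linear subspace where the coordinates within each block are equal. Its image in $\mathscr{C}_P(D)$ is a $(t-1)$-dimensional singular complex. We then show this image is combinatorially finite. Regrouping the blocks gives a smaller sum: when some differences $r_i-r_j$ tend to $+\infty$ along a sector direction, the relevant piece behaves like a $\beta$-split situation in the quotient directions. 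There the argument of Th. \ref{MthE} and \cite{BravoLoisel} (the Harder--Narasimhan filtration becoming the splitting filtration far out in the sector) produces flat, embedded subsectors. Their complement is a bounded neighborhood of the walls, which are families of the same type with $t-1$ blocks. Induction on $t$ (equivalently on dimension) gives the recursive structure, with the base case $t=1$: finitely many shapes up to twist give finitely many vertices.

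Finally we take the union over the finitely many shapes, which is still combinatorially finite of dimension $\leq n-2$. We also check that every non-split vertex is captured, since any non-split $\Lambda$ is of one of these shapes. The main obstacle is the middle step: proving that far out in a $t$-block sector the $N$-images do not branch, i.e. that stars are flat and the sector embeds. The approach we would try first is the slope argument. When the gaps $\deg(\LL^{r_iP}E_i^0)/\mathrm{rk}-\deg(\LL^{r_jP}E_j^0)/\mathrm{rk}$ exceed a constant depending on the genus, $\mathrm{Hom}$ from higher to lower slope blocks vanishes. The automorphism group of $\Lambda$ is then block upper triangular, and its orbit on the neighbors of the vertex can be compared to the orbit under the stabilizer of the sector, exactly as in \cite[Th.~2.1]{BravoLoisel}. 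If the flatness fails to be uniform, we fall back on only claiming a bounded-neighborhood decomposition, which still suffices for the definition of combinatorial finiteness.
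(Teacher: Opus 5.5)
This is essentially the paper's proof. The paper likewise fixes representatives of indecomposable bundles up to twisting by $\LL^{rP}$, uses Lemma~\ref{lemma610} to get finitely many sequences $\Lambda_1,\dots,\Lambda_m$ with ranks summing to $n$ and $m<n$, observes that the bundles $\bigoplus_i\LL^{r_iP}\otimes\Lambda_i$ lie in an $(m-1)$-dimensional affine subspace of an apartment, and stops there. Your middle paragraph (flat embedded subsectors via Harder--Narasimhan and \cite{BravoLoisel}, induction on $t$) goes beyond what the paper argues and remains unproved as you wrote it. Also, dropping flatness as in your fallback would not literally meet the definition of combinatorial finiteness in \S\ref{section main results}, which requires the sectors to be cuspidal.
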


\begin{proof}
    Choose one representative for each class of indecomposable vector bundles
    up to tensor product with an invertible bundle of the form $\LL^{rP}$, 
    including the one dimensional bundles. Consider all sequences of the form 
    $\Lambda_1,\dots,\Lambda_m$ of these representatives having dimensions that
    sum precisely $n$, but with $m<n$. The bundles of the form
    $\LL^{r_1P}\otimes_{\oo_X}\Lambda_1\oplus\cdots\oplus
    \LL^{r_mP}\otimes_{\oo_X}\Lambda_m$
    fall in a subset of an apartment whose realization is homeomorphic to
    an affine subspace of dimension $m-1<n-1$, and  there is only a finite 
    number of these sequences, according to Lemma \ref{lemma610}.
\end{proof}

%\begin{comment}

\section{GSAs, QSAs and good sectors}\label{split apartments}

Our strategy to compute the C-complex,
as defined in 
\S\ref{section main results},
consists in studying the images via
the canonical projection of the split apartments.
According to Th. \ref{mainth-1} and Th. \ref{mainth1},
these sets cover the C-complex, except for a ``combinatorially
small set'', i.e., an $r$-neighborhood of a combinatorially 
finite $(n-2)$-dimensional complex, as defined 
in \S \ref{section main results}. As Th. \ref{mainth1} shows, 
computing these images reduces to studying the action of a suitable 
Weyl group. In this section, we introduce some tools that are needed 
to prove both theorems. Their proofs are completed in \S\ref{ch4}.

\begin{Def}
Let $\hat{\Sigma}_{\BB,P,\beta}$ be the image in $\mathscr{C}_P(D)$ of the GSA $\Sigma_{\BB,P,\beta}$. 
This is an element in the category $\mathfrak{sing}$ defined in \S\ref{SSsing}. It is called a quotient split apartment, or a QSA.
\end{Def}

Here, we keep the notations  on divisors from 
\S \ref{section conventions}.
We let the (Weyl) group $S_n$ act by permutations on the abelian group 
$\mathrm{Div}(X)^{n}$ of divisor vectors. This is denoted in the 
sequel as follows:
\begin{equation}\label{eq action of weyl group}
\sigma \cdot \BB= \sigma\cdot\big(B_1,\dots,B_n \big)=
\big(B_{\sigma^{-1}(1)},\dots,B_{\sigma^{-1}(n)}\big),
\end{equation}
whenever $\BB=(B_1, \cdots, B_n) \in \mathrm{Div}(X)^{n}$ and 
$\sigma \in S_n$.
This action of $S_n$ on $\mathrm{Div}(X)^{n}$ induces an action of $S_n$
on $\mathrm{Div}(\A)^{n}$ by essentially ignoring the $P$-coordinate in 
each divisor. This action also descends to the quotient
$\mathrm{Pic}(\A)^n$, where $\mathrm{Pic}(\A)$ is the ideal class
group of $\A$. Since $S_n$ acts trivially on 
$\Delta_{\mathrm{Pic}(\A)} \subset \mathrm{Pic}(\A)^{n}$, 
the preceding action factors through
$\nabla_{\mathrm{Pic}(\A)}=\mathrm{Pic}(\A)^{n}/
\Delta_{\mathrm{Pic}(\A)}$.
In the sequel, we respectively denote by $(\BB)$ and $\BBB$ the classes 
of $\BB \in \mathrm{Div}(X)^{n}$ in $\mathrm{Pic}(\A)^{n}$ and 
$\nabla_{\mathrm{Pic}(\A)}$.

\begin{lem}\label{lemma action of H}
Let $H=H_{\BBB}\subseteq S_n$ be the stabilizer
of the class $\BBB\in\nabla_{\mathrm{Pic}(\A)}$. Then $H$ acts on the 
split apartment $\Sigma_{\BB,P,\beta}$ via a subgroup
$H'$ of the normalizer $N=N_{\mathrm{GL}_n(K)}(D)$ of 
$D=\DD_{\BB,\beta}(U_P)$. The group $H'$
contains the group $\mathbb{D}(k)$ of linear maps whose matrix representation, in the base $\beta$, consists of invertible diagonal
matrices with entries in $k^{*}$, and $H\cong H'/\mathbb{D}(k)$.
\end{lem}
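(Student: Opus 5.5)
We work in the basis $\beta$, i.e.\ after conjugating by $\mathtt{P}$ we may assume $\beta=\epsilon$, so that $D=\left(\LL^{B_i-B_j}(U_P)\right)_{ij}$ and $D=\mathrm{End}_{\A}(L)$ with $L=\LL^{B_1}(U_P)\oplus\cdots\oplus\LL^{B_n}(U_P)$. The plan is to describe explicitly the subgroup $H'\subseteq N$ of \emph{monomial} matrices normalizing $D$, i.e.\ matrices of the form $g=\mathtt{M}_\sigma\,\mathrm{diag}(f_1,\dots,f_n)$, where $\mathtt{M}_\sigma$ is the permutation matrix of $\sigma\in S_n$ and $f_i\in K^*$. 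Such matrices permute the lines $K\vec v_i$, and they preserve the set of $\beta$-split lattices. Hence, whenever they normalize $D$, they map the GSA $\Sigma_{\BB,P,\beta}$ to itself. This is because a vertex $\DD_{\BB+\vec uP,\beta}$ is sent to the order of the lattice $g\LL_{\BB+\vec uP,\beta}$, which is again $\beta$-split.

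First I compute the image of a split lattice under $g$. We have $\mathrm{diag}(f_i)\LL_{\BB,\beta}=\LL_{\BB-\mathrm{div}(\vec F),\beta}$, and $\mathtt{M}_\sigma\LL_{\BB,\beta}=\LL_{\sigma\cdot\BB,\beta}$ with the convention \eqref{eq action of weyl group}. Restricting to $U_P$, the element $g$ normalizes $D$ if and only if $gL=JL$ for a fractional ideal $J$ of $\A$, i.e.\ if and only if $\sigma\cdot\BB-\mathrm{div}(\sigma\vec F)\equiv\BB+\vec\delta B_0$ modulo divisors supported at $P$. Taking classes, this implies $\sigma\cdot\BBB=\BBB$ in $\nabla_{\mathrm{Pic}(\A)}$. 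Conversely, if $\sigma\in H$, then $\sigma\cdot\BB-\BB\equiv\vec\delta B_0+\mathrm{div}(\vec G)$ in $\mathrm{Div}(\A)^n$ for some $\vec G\in\mathbb{K}$ and some divisor $B_0$. Choosing $\vec F$ accordingly, the matrix $\mathtt{M}_\sigma\mathrm{diag}(\vec F)$ then lies in $N$. So the map $\rho:H'\to S_n$, $g\mapsto\sigma$, is a homomorphism onto $H$, where $H'$ is defined as the set of monomial matrices in $N$.

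The kernel of $\rho$ consists of the diagonal matrices $\mathrm{diag}(\vec F)\in N$. The condition on these is $\mathrm{div}(f_i)|_{U_P}=B_0$ for all $i$, i.e.\ all the $f_i$ generate the same fractional ideal of $\A$, namely $f_i\in f_1\A^*$. Here is the main obstacle. As stated, the kernel should be $\mathbb{D}(k)$, which forces a normalization: the lemma must be read with $H'$ modulo the scalars $K^*\subseteq N$, which act trivially on the building, or with $H'$ restricted to elements fixing the vertex $\DD_{\BB,\beta}$ up to the shift. Indeed, $\A^*$ is generally larger than $k^*$, and its elements act on $\Sigma_{\BB,P,\beta}$ by translations by vectors $\vec uP$. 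I would therefore define $H'$ by choosing, for each $\sigma\in H$, a lift $g_\sigma$ with $\vec F$ fixed by $\sigma$ modulo $\A^*$-ambiguity, and then generate $H'$ by the $g_\sigma$ together with $\mathbb{D}(k)$. Next I check closure: the cocycle $g_\sigma g_\tau g_{\sigma\tau}^{-1}$ is diagonal with entries in $\A^*$ having zero divisor at $P$ as well. These entries are global functions regular everywhere on the projective curve $X$, hence lie in the algebraic closure of $k$ in $K$, which is $k$ by geometric integrality. So the cocycle lies in $\mathbb{D}(k)$, after rescaling by a scalar. It follows that $H'$ is a group, that $\mathbb{D}(k)\subseteq H'$ (each element of $\mathbb{D}(k)$ trivially normalizes $D$), and that $H'/\mathbb{D}(k)\cong H$.

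Finally, $\mathbb{D}(k)$ fixes every $\beta$-split lattice up to homothety, hence acts trivially on $\Sigma_{\BB,P,\beta}$. Therefore $H\cong H'/\mathbb{D}(k)$ acts on the apartment, by affine maps $\vec u\mapsto\sigma\cdot\vec u+\vec c_\sigma$. This gives the claimed action of $H$ on $\Sigma_{\BB,P,\beta}$ via $H'$.
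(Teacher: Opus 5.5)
Your set-up is the paper's. You take $H'$ to be the $\beta$-monomial matrices in $N$, which is the same as the paper's $*$-stabilizer of $\Sigma_{\BB,P,\beta}$ in the monomial group. The map $\rho\colon H'\to S_n$ has image $H$, obtained by comparing classes in $\nabla_{\mathrm{Pic}(\A)}$, and $\ker\rho$ consists of the diagonal matrices with $f_i\in f_1\A^*$. All of that is correct.

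Your remark that $\A^*$ is generally larger than $k^*$ is false. A unit of $\A$ has divisor $mP$, and $\deg\mathrm{div}(f)=0$ forces $m=0$. So $\A^*=k^*$, $\ker\rho=K^*\mathbb{D}(k)$ exactly, and no translations come from units.

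You are right that the scalars $fI$ with $f\in K^*$ are diagonal and lie in $N$. The paper's proof asserts $\mathbb{D}\cap N=\mathbb{D}(k)$ because a non-constant entry has a zero or pole at some $Q\neq P$. That argument overlooks the scalars, since normalizing $D_Q$ depends only on the differences $\nu_Q(f_i)-\nu_Q(f_j)$. Your first suggested reading, modulo scalars, is therefore the correct one.

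The genuine gap is your repair: choosing lifts $g_\sigma$ so that $g_\sigma g_\tau g_{\sigma\tau}^{-1}\in\mathbb{D}(k)$ ``after rescaling by a scalar''. This amounts to splitting the central extension of $H$ by $K^*\mathbb{D}(k)/\mathbb{D}(k)\cong K^*/k^*$, and that extension need not split.

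A concrete failure is Ex.~\ref{e7}, with $X=\mathbb{P}^1$, $\deg P=3$, $\BB=(0,1,2)Q$ and $H=\langle(1\,2\,3)\rangle$. The lift $g$ given there satisfies $g^3=fI$ with $\mathrm{div}(f)=P-3Q$. Every other monomial lift has the form $g'=g\,h\,\mathrm{diag}(c_1,c_2,c_3)$ with $h\in K^*$ and $c_i\in k^*$. It satisfies $g'^3=h^3c_1c_2c_3fI$, and $\nu_P(h^3f)=3\nu_P(h)+1\neq 0$. Hence $g'^3\notin\mathbb{D}(k)$. The group generated by your lifts and $\mathbb{D}(k)$ then contains every $f^mI$, so your $H'/\mathbb{D}(k)$ is infinite.

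No non-monomial choice fixes this either. Any $g'\in N$ acting as this $3$-cycle rotates the vertex types of the chamber centred at $\clubsuit$, so $\nu_P(\det g')\not\equiv 0\pmod 3$. Then $\nu_P(\det g'^3)\neq0$, and $g'^3$ cannot lie in $\mathbb{D}(k)$.

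The true statement is $H\cong H'/K^*\mathbb{D}(k)$, where $K^*\mathbb{D}(k)$ acts trivially on $\Sigma_{\BB,P,\beta}$. It is also all that is used afterwards, in Cor.~\ref{cor cont}, Prop.~\ref{prop H N action on simplices} and Th.~\ref{mainth1}. You should conclude with that rather than try to shrink the kernel to $\mathbb{D}(k)$.
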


\begin{proof}
Let $G'\subseteq\mathrm{GL}_n(K)$ be the group generated by
linear transformations whose matrix expression in the base 
$\beta$ are either invertible diagonal matrices or permutation matrices.
Note that this group is a semidirect product 
$\mathbb{D}\rtimes S_n$, 
where $\mathbb{D}$ is the group of diagonal matrices.
Let $H'$ be the $*$-stabilizer of $\Sigma_{\BB,P,\beta}$ in $G'$. 
Then $H'$ acts on $\Sigma_{\BB,P,\beta}$ via the induced $*$-
action, and $H'\subseteq N$, since $D$ is the 
restriction to $U_P$ of every maximal order corresponding to 
a vertex of $\Sigma_{\BB,P,\beta}$.
It is immediate that $\mathbb{D}(k)$ is contained in $H'$ and, 
since $\nu(k^{*})=\lbrace 0\rbrace$, it acts 
trivially on the apartment $\Sigma_{\BB,P,\beta}$.
Furthermore, we claim that 
$\mathbb{D}(k)=\mathbb{D}\cap N$, which proves that 
$\mathbb{D}(k)=\mathbb{D}\cap H'$, and therefore the quotient
$H_1\cong H'/\mathbb{D}(k)$ can be naturally identified
with a subgroup of $S_n$ that acts on $\Sigma_{\BB,P,\beta}$. 
It only remains, therefore, to prove the claim,
and the identity $H_1=H$.

Next we prove the claim. For this, we just need to prove 
that $\mathbb{D}\cap N\subseteq \mathbb{D}(k)$.
Let $d$ be the matrix whose expression with respect to the basis 
$\beta$ is the diagonal matrix $\mathrm{diag}(f_1,\dots,f_n)$. 
Recall that we have the formula
\begin{equation}\label{eq action of d}
d\DD_{\BB,\beta}d^{-1}=\DD_{\BB+\mathrm{div}(\vec F),\beta},\quad
\text{where}\quad\mathrm{div}(\vec F)=
\big(\mathrm{div}(f_1),\dots,\mathrm{div}(f_n)\big).
\end{equation}
Assume some diagonal element $f_i$ is non-constant, i.e., 
assume that $d \notin \mathbb{D}(k)$. Then its divisor
has a non-trivial coefficient at some place $Q$ distinct 
from $P$. It follows that
$d$ cannot normalize the completion of $D$ at $Q$. 
Hence $d \notin \mathbb{D}\cap N$, and therefore the claim follows.

Now take $\sigma\in H_1$. This means that there are matrices $p$ and $d$
such that, in the basis $\beta$, $d$ is diagonal as in 
Eq. \eqref{eq action of d}, $p$ is the permutation
matrix corresponding to $\sigma$, and $dp$ normalizes the apartment
$\Sigma_{\BB,P,\beta}$. This implies that 
$\DD_{\sigma\cdot\BB+\mathrm{div}(\vec F),\beta}=
\DD_{\BB+\vec vP,\beta}$, for some $\vec v\in\ZZ^n$,
and therefore
$$\sigma\cdot\BB+\mathrm{div}(\vec F)=\BB+\vec vP+\vec\delta B_0,$$
for some divisor $B_0$. By taking classes in $\nabla_{\mathrm{Pic}(\A)}$,
we conclude that $\sigma$ stabilizes the class $\BBB$. This
proves that $H_1\subseteq H$. The statement $H\subseteq H_1$ is proved
analogously since the preceding relation allows us to construct an element
$dp$, as above, that normalizes the apartment. The result follows.
\end{proof}

Next result follows directly from Eq. \eqref{eq action of d},
since every element in $G'$ has the form $dp$, where $d$ is
diagonal on the basis $\beta$, and $p$ is a permutation matrix
in the same basis.

\begin{Corollary}
The group $H$ defined in Lemma \ref{lemma action of H} acts on 
the vertex set of the split apartment $\Sigma_{\BB,P,\beta}$ via
\begin{equation}\label{EqHaction}
\sigma \bullet \DD_{\BB,\beta}=h_\sigma * \DD_{\BB,\beta}=\DD_{\sigma\cdot\BB+
\mathrm{div}(\vec F_\sigma),\beta},
\end{equation}
where $\sigma \in H$ and $\vec F_\sigma$ is the diagonal 
component of any element $h_\sigma\in H'$ corresponding to 
$\sigma$. \qed
\end{Corollary}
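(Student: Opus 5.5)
The corollary follows by tracing what a single element of $H'$ does to one vertex, using Lemma \ref{lemma action of H} and Eq. \eqref{eq action of d}. Fix $\sigma\in H$. By Lemma \ref{lemma action of H}, choose a lift $h_\sigma\in H'$. Since $H'$ is contained in $G'=\mathbb{D}\rtimes S_n$, we write $h_\sigma=d\,p$. Here $p$ is the permutation matrix of $\sigma$ in the basis $\beta$, and $d$ is diagonal in $\beta$ with entries $\vec F_\sigma=(f_1,\dots,f_n)$. The $*$-action is conjugation, so for a vertex $\DD_{\BB',\beta}$ of $\Sigma_{\BB,P,\beta}$, with $\BB'=\BB+\vec uP$, we compute $h_\sigma*\DD_{\BB',\beta}=d\,(p\DD_{\BB',\beta}p^{-1})\,d^{-1}$ in two steps.

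First, we show $p\DD_{\BB',\beta}p^{-1}=\DD_{\sigma\cdot\BB',\beta}$. In the basis $\beta$, the order $\DD_{\BB',\beta}$ is the matrix of sheaves $(\LL^{B'_i-B'_j})_{ij}$. Conjugating by a permutation matrix permutes rows and columns simultaneously, giving $(\LL^{B'_{\sigma^{-1}(i)}-B'_{\sigma^{-1}(j)}})_{ij}$. Equivalently, $p$ sends the lattice $\LL_{\BB',\beta}$ to $\LL_{\sigma\cdot\BB',\beta}$, since $p$ maps $\vec v_j$ to $\vec v_{\sigma(j)}$. This is exactly the convention of Eq. \eqref{eq action of weyl group}. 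This is the only point needing care: $\sigma$ must act through $\sigma^{-1}$ on the indices. If the opposite convention came out, we would replace $p$ by the matrix of $\sigma^{-1}$, which does not affect the statement.

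Second, we apply Eq. \eqref{eq action of d} to the diagonal factor $d$, which gives $\DD_{\sigma\cdot\BB'+\mathrm{div}(\vec F_\sigma),\beta}$. This is the displayed formula \eqref{EqHaction} for the vertex $\BB'$. The proof of Lemma \ref{lemma action of H} shows that this vertex again lies in $\Sigma_{\BB,P,\beta}$, because $h_\sigma$ stabilizes the apartment.

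Finally, we check that the formula is well defined and gives an action of $H$. Two lifts of $\sigma$ differ by an element of $\mathbb{D}(k)$, which has trivial divisors. Such an element therefore changes $\vec F_\sigma$ only by constants, and Eq. \eqref{eq action of d} shows it acts trivially. Hence the result does not depend on the choice of $h_\sigma$. Compatibility with composition follows from $H\cong H'/\mathbb{D}(k)$ and the fact that the $*$-action of $H'$ is a group action. The permutation-convention check in the first step is the only non-routine point.
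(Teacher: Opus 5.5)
Your proof is correct and follows the paper's route. The paper only notes that every element of $G'$ has the form $dp$ and invokes Eq.~\eqref{eq action of d}; you additionally check the permutation convention against Eq.~\eqref{eq action of weyl group} and show independence of the choice of lift via $H\cong H'/\mathbb{D}(k)$.

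One caveat, which you inherit from the paper rather than introduce: since $f\LL^{B}=\LL^{B-\mathrm{div}(f)}$, a direct computation gives $d\DD_{\BB,\beta}d^{-1}=\DD_{\BB-\mathrm{div}(\vec F),\beta}$ (this sign is the one consistent with Example~\ref{e7}, where $\mathrm{div}(f)=P-3Q$). So the displayed formula holds literally only with $\vec F_\sigma$ replaced by its coordinatewise inverse, which is harmless for every use of the corollary.
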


Next result is immediate.

\begin{Corollary}\label{cor cont}
There exists a simplicial map $f_{\BB,\beta}$ from the quotient
$H\backslash\Sigma_{\BB,P,\beta}$ to $\hat{\Sigma}_{\BB,P,\beta}$.\qed
\end{Corollary}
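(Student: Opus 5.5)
The plan is to build $f_{\BB,\beta}$ as the map induced on quotients by the inclusion of the apartment into the building. Let $\iota:\Sigma_{\BB,P,\beta}\hookrightarrow\mathscr{B}_P(D)$ denote the inclusion of the globally split apartment; it is a simplicial map of singular complexes, since combinatorial simplices of the subcomplex are combinatorial simplices of the building. Compose it with the canonical projection $\rho:\mathbf{B}_P(D)\to N\backslash\mathbf{B}_P(D)$. By definition of the quotient singular complex, $\rho$ is simplicial, so $\rho\circ\iota$ is a simplicial map from $\Sigma_{\BB,P,\beta}$ onto its image, which is precisely the QSA $\hat{\Sigma}_{\BB,P,\beta}$ regarded as a subcomplex of $\mathscr{C}_P(D)$.

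Next I would check that $\rho\circ\iota$ is constant on $H$-orbits. By Lemma \ref{lemma action of H}, the action of $\sigma\in H$ on the apartment is the restriction of the $*$-action of some $h_\sigma\in H'\subseteq N$. Here $H'$ is the $*$-stabilizer of $\Sigma_{\BB,P,\beta}$ inside $G'$, so $h_\sigma$ maps the realization of the apartment to itself. It is well defined modulo $\mathbb{D}(k)$, and $\mathbb{D}(k)$ acts trivially on the apartment. Hence $\rho(h_\sigma*x)=\rho(x)$ for every $x$ in the realization of $\Sigma_{\BB,P,\beta}$. By the universal property of the quotient topology, $\rho\circ\iota$ therefore factors through a continuous map $f_{\BB,\beta}:H\backslash\mathbf{\Sigma}_{\BB,P,\beta}\to\hat{\mathbf{\Sigma}}_{\BB,P,\beta}$.

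The remaining step is to verify that $f_{\BB,\beta}$ is simplicial. By the proposition on quotients, every simplex of $H\backslash\Sigma_{\BB,P,\beta}$ has the form $\rho_H\circ\tau$ for a simplex $\tau$ of the apartment, where $\rho_H$ is the projection to the $H$-quotient. Then $f_{\BB,\beta}\circ\rho_H\circ\tau=\rho\circ\tau$, which is a simplex of $\hat{\Sigma}_{\BB,P,\beta}$ by definition. The only point needing care is that $H$ genuinely acts by simplicial maps on the apartment, so that $H\backslash\Sigma_{\BB,P,\beta}$ is an object of $\mathfrak{sing}$. This follows because $h_\sigma*$ is a restriction of the simplicial $N$-action, so I expect no real obstacle here.
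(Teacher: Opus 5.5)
Your proposal is correct and follows the paper's route. The paper calls the corollary immediate, and the implicit argument is the one you give: $H$ acts on the apartment through $H'\subseteq N$, with $\mathbb{D}(k)$ acting trivially, so the inclusion composed with the projection to $\mathscr{C}_P(D)$ factors through $H\backslash\Sigma_{\BB,P,\beta}$. Simpliciality then follows from the definition of quotient singular complexes, so you have just made those details explicit.
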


It is proved in next section that the map $f_{\BB,\beta}$ defined above is an isomorphism.
For the rest of this section we need to fix an isomorphism between the GSA 
$\Sigma_{\BB,P,\beta}$ and a singular complex $\mathscr{R}$ 
whose subjacent topological space is $\nabla_{\mathbb{R}}=
\mathbb{R}^n/\langle(1,\dots,1)\rangle\cong\mathbb{R}^{n-1}$,
whose elements are denoted as classes  $[x_1,\dots,x_n]$ with the convention that
 $[x_1,\dots,x_n]=[x_1+c,\dots,x_n+c]$. We assume that the class of the vertex 
 corresponding to the 
maximal order $\DD_{\BB',\beta}$, with $\BB'=(B'_1,\dots,B'_n)$ is identified 
with the class $[r_1,\dots,r_n]$ of $(r_1,\dots,r_n)$, where $r_i=\deg(B'_i)$,
and we assume that a simplex in $\nabla_{\mathbb{R}}$ is the convex hull of its
vertices. Note that the origin $x_0=[0,\dots,0]$ need not be a vertex in the 
complex $\mathscr{R}$, and note that two such complexes corresponding to 
different QSA are only non-canonically isomorphic.

\begin{Def}\label{d73}
The geometrical sector
corresponding to the permutation $\sigma$ is the subset $\mathbf{Q}_\sigma
\subseteq\nabla_{\mathbb{R}}$ defined by
$$\mathbf{Q}_\sigma=\left\{[x_1,\dots,x_n]\big|x_{\sigma(1)}<\cdots<x_{\sigma(n)}\right\},
\qquad x_1,\dots,x_n\in\mathbb{R}.$$
A fine sector corresponding to the permutation $\sigma$ is a subcomplex
$\mathscr{Q}(\sigma,v)$ of $\Sigma_{\BB,P,\beta}$ whose realization corresponds
to a subset of the form $v+\overline{\mathbf{Q}_\sigma}$, where 
$v\in\mathbf{Q}_\sigma$ is a vertex. Note that the set 
$v+\overline{\mathbf{Q}_\sigma}$
is fully contained in the sector $\mathbf{Q}_\sigma$. 
We call the sector $\mathscr{Q}(\sigma,v)$ good whenever 
the class $[r_1,\dots,r_n]$ corresponding
to $v$ satisfies $r_{\sigma(i+1)}-r_{\sigma(i)}>\kappa$ for 
any constant $\kappa$ satisfying the Riemann-Roch relation
$$\dim_k\left(\LL^{B'}(X)/\LL^{B'-P}(X)\right)=[k(P):k],$$
for every divisor $B'$ whose degree exceeds $\kappa$ 
(cf.~\cite[\S 1, Th.~1.5.17]{Stichtenoth}). Note that whenever $w$ 
is a vertex in the good sector $\mathscr{Q}(\sigma,v)$, the subsector 
$\mathscr{Q}(\sigma,w)$  is also good. When $\mathscr{Q}(\sigma,v)$
is a fine or good sector with realization $v+
\overline{\mathbf{Q}_\sigma}$, the open sector 
$\mathbf{Q}(\sigma,v)=v+\mathbf{Q}_\sigma$ is also called fine or good.
\end{Def}

\begin{lem}\label{enoghcpa}
Let $\DD$ be the maximal order corresponding to a vertex $v$ in a good sector. Then the image $\mathbb{D}$ of the ring of global sections $\DD(X)$ in the local residual ring $\DD_P/\pi_P\DD_P$ has the form
\begin{equation}\label{formofD}
\mathbb{D} \cong \begin{pmatrix}
 %& & &\\
k & k(P) &\cdots  & k(P) \\
 0 & k &  &\vdots \\
 \vdots & & \ddots & k(P) \\
 0 &\cdots & 0  & k
 \end{pmatrix}
 \subseteq\mathbb{M}_n\big(k(P)\big),
\end{equation}
 in the basis $\beta$ corresponding to any GSA 
 $\Sigma_{\BB,P,\beta}$ containing the sector.
\end{lem}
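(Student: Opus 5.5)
We reduce to an explicit computation with split lattices. After replacing $\BB$ by a suitable representative of its class, we may write the vertex as $\DD=\DD_{\BB',\beta}$ with $\BB'=(B'_1,\dots,B'_n)$. After renumbering the basis $\beta$ via $\sigma$, we may assume $\sigma$ is the identity, so that $r_i=\deg B'_i$ satisfies $r_{i+1}-r_i>\kappa$ for all $i$. Hence $\deg(B'_j-B'_i)>\kappa$ whenever $j>i$, since the gaps add up. The order is $\DD=\left(\LL^{B'_i-B'_j}\right)_{ij}$ in the basis $\beta$; with the paper's conventions, the convention $r_{i+1}>r_i$ should be matched so that the entries above the diagonal have large positive degree. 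If the conventions come out transposed, we relabel. Global sections are computed entrywise: $\DD(X)=\left(\LL^{B'_i-B'_j}(X)\right)_{ij}$. The completion at $P$ is $\DD_P=\left(\pi_P^{b_i-b_j}\oo_P\right)_{ij}$, where $b_i=\nu_P(B'_i)$. Choosing the local basis $\pi_P^{b_i}\vec v_i$ identifies $\DD_P$ with $\mathbb{M}_n(\oo_P)$ and $\DD_P/\pi_P\DD_P$ with $\mathbb{M}_n(k(P))$. Under this identification, the $(i,j)$ entry of a global section $f\in\LL^{B'_i-B'_j}(X)$ maps to the residue of $\pi_P^{b_j-b_i}f$. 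The image of the $(i,j)$ entry is therefore the image of the reduction map $\LL^{E}(X)\to\LL^{E}(X)/\LL^{E-P}(X)\hookrightarrow k(P)$, where $E=B'_i-B'_j$.

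We then treat the three types of entries. For the diagonal entries, $E=0$ and $\LL^0(X)=k$, since $X$ is geometrically integral. The image is $k$, embedded in $k(P)$ as constants. For the entries on one side of the diagonal, $\deg E<0$ is very negative, so $\LL^E(X)=0$ and the image is $0$. For the remaining off-diagonal entries, $\deg E>\kappa$. The Riemann–Roch relation in Definition \ref{d73} then gives $\dim_k \LL^E(X)/\LL^{E-P}(X)=[k(P):k]$. Since the quotient injects into $\LL^E_P/\LL^{E-P}_P\cong k(P)$, this injection is surjective, and the image is all of $k(P)$. Assembling the entries yields the displayed upper triangular form \eqref{formofD}. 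The image is a ring because it is the image of a ring homomorphism.

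Finally, we address independence of the chosen GSA containing the sector. Another GSA containing the same sector corresponds to a basis differing from $\beta$ by diagonal scaling. Possibly it also involves the stabilizer action of Lemma \ref{lemma action of H}, but this action must preserve the sector and hence the ordering. Rescaling the basis vectors only conjugates by a diagonal matrix in the residue ring, which preserves the shape \eqref{formofD}. The main obstacle is the bookkeeping of conventions: checking which triangle receives $k(P)$ under the identification of vertices with $[r_1,\dots,r_n]$, and ensuring the residue-level identification is compatible with the diagonal embedding $k\subseteq k(P)$. Both are routine once the local basis $\pi_P^{b_i}\vec v_i$ is fixed.
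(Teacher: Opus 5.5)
Your core computation is the paper's. You normalize the sector by permuting $\beta$, write $\DD=(\LL^{B'_i-B'_j})_{ij}$, and read off the residual image entry by entry:
\begin{itemize}
\item on the diagonal the image is $\oo_X(X)=k$;
\item when $\deg E<0$ the image is $0$, since $\LL^E(X)=0$;
\item when $\deg E>\kappa$ the image is all of $k(P)$, since a section of $\LL^E(X)$ vanishes in $\LL^E_P/\pi_P\LL^E_P$ exactly when it lies in $\LL^{E-P}(X)$, and the definition of $\kappa$ then forces surjectivity.
\end{itemize}
As you suspected, the orientation you fixed ($r_{i+1}-r_i>\kappa$) gives the entries above the diagonal negative degree, so you get the transposed shape. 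The paper instead reduces to the order-reversing permutation $\sigma_0$, i.e.\ $\deg(B'_i-B'_{i+1})>\kappa$, which gives \eqref{formofD} directly.

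Your final paragraph, however, rests on a false claim. Another GSA containing the sector need not have a basis obtained from $\beta$ by diagonal scaling together with the $H$-action. Work in the paper's orientation, take $i<j$ and $0\neq f\in\LL^{B'_i-B'_j}(X)$ for the tip $\DD_{\BB',\beta}$, and let $E$ be the matrix $1+fE_{ij}$ in the basis $\beta$. Along the sector the $(i,j)$-divisor only grows, so $E$ and $E^{-1}$ are global sections of the order of every vertex of the sector. Hence $E\in D^*\subseteq N$ fixes the sector pointwise. It follows that $\Sigma_{\BB',P,E\beta}$ is a GSA containing the same sector, yet it differs from every GSA built on a rescaled or permuted $\beta$: its frame at $P$ contains the line $K_P(\vec v_j+f\vec v_i)$.

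The conclusion you wanted is still true, and for a simpler reason that needs no comparison of bases. Let $\Sigma_{\BB'',P,\beta'}$ be any GSA containing $v$, and write $\DD=\DD_{\BB''',\beta'}$. By Prop.~\ref{prop_canonical}, $\BB'''$ is coordinate-wise linearly equivalent to a permutation of $\BB'$ plus $B_0\vec\delta$. So the degree-gap condition also holds in the coordinates of $\beta'$, and the same entrywise computation applies verbatim in that basis.
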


\begin{proof}
Permuting the basis or decreasing the sector if needed, we can assume 
that the good sector is $\mathscr{Q}=\mathscr{Q}(\sigma_0,v)$, where
$\sigma_0\in S_n$ reverses the order of $\{1,\dots,n\}$. Then 
$\DD=\DD_{\BB',\beta}$ with $\BB'=(B'_1,\dots,B'_n)$, where 
$\deg(B'_{i}-B'_{i+1})>\kappa$. Recall that this order is, by definition
$$\DD = \begin{pmatrix}
\oo_X & \LL^{B'_1-B'_2}&\cdots  & \LL^{B'_1-B'_n} \\
 \LL^{B'_2-B'_1} & \oo_X &  &\vdots \\
 \vdots & & \ddots & \LL^{B'_{n-1}-B'_n} \\
 \LL^{B'_n-B'_1} &\cdots & \LL^{B'_n-B'_{n-1}}  & \oo_X 
 \end{pmatrix}.$$
 Now we recall that $\LL^{B''}(X)=0$ when 
 $\deg{B''}<0$, while $\oo_X(X)=k$.
 This takes care of all coefficients on or below the main diagonal. 
 For the coefficients above the diagonal we note that an element 
 in $\LL^{B''}(X)$ is trivial in $\LL^{B''}_P/\pi_P\LL^{B''}_P$ 
 precisely when it belongs to $\LL^{B''-P}(X)$. The result follows
 now from the definition of $\kappa$.
\end{proof}

\begin{lem}\label{enoghcp}
Let $\mathfrak{D}$, $\mathbb{D}$ and $\Sigma_{\BB,P,\beta}$
as in Lemma \ref{enoghcpa}.
The natural action of the group $\mathbb{D}^*$ on the star of the vertex $v$ 
corresponding to $\mathfrak{D}$ has precisely one orbit for each simplex of
the star that is contained in $\Sigma_{\BB,P,\beta}$. 
Furthermore, there is a unique chamber that is invariant with 
respect to this action, and the stabilizer 
$\mathrm{Stab}_N(v)$ in $N$ acts with precisely the same orbits.
\end{lem}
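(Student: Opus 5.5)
We translate the star of $v$ into flags. The vertices adjacent to $v$ in $\mathcal{B}_P(D)$ correspond to lattices $L$ with $\pi_P L_v\subsetneq L\subsetneq L_v$, where $L_v=L_{\vec b,P,\beta}$ is the completion at $P$ of the lattice defining $\DD$. Hence they correspond to proper nonzero subspaces of $V=L_v/\pi_PL_v\cong k(P)^n$. Simplices of the star containing $v$ correspond to flags in $V$, by \ref{st2}. The simplices of the star lying in $\Sigma_{\BB,P,\beta}$ correspond exactly to the coordinate flags, i.e. flags all of whose members are spanned by subsets of the images $\bar v_1,\dots,\bar v_n$ of the basis $\beta$. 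Global sections $\DD(X)$ act on $V$ through the ring $\mathbb{D}$ of Lemma \ref{enoghcpa}, and the units $\DD(X)^*$ lie in $N$ and fix $v$. So the $\mathbb{D}^*$-action on the star is the natural action of the group $\mathbb{D}^*$ of upper triangular matrices with diagonal in $k^*$ and strictly upper entries in $k(P)$.

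The key step is a Bruhat-type decomposition for this group. Let $\mathbb{U}$ be its unipotent radical, the strictly upper triangular matrices with $k(P)$ entries. The claim is that $\mathbb{U}$ already acts on the flag variety of $k(P)^n$ (of any type) with each orbit containing exactly one coordinate flag.
\begin{itemize}
\item Existence is the Bruhat decomposition $\mathrm{GL}_n(k(P))=\coprod_w \mathbb{B}w\mathbb{P}$, together with $\mathbb{B}=\mathbb{T}\mathbb{U}$, where $\mathbb{T}$ is the diagonal torus and fixes coordinate flags.
\item Uniqueness follows from the uniqueness of $w$ modulo the parabolic Weyl subgroup.
\end{itemize}
More concretely, one can argue by Gaussian elimination: a subspace has a unique basis in reduced column echelon form. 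The set of pivot positions is a $\mathbb{U}$-invariant that determines the coordinate subspace, and $\mathbb{U}$ clears the non-pivot entries. For flags one proceeds inductively along the flag.

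Since $\mathbb{U}\subseteq\mathbb{D}^*\subseteq \mathbb{B}$ and $\mathbb{D}^*$ preserves the set of pivot data, the $\mathbb{D}^*$-orbits coincide with the $\mathbb{U}$-orbits. This gives the first statement.

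For the invariant chamber, we look for a flag fixed by $\mathbb{D}^*$. The standard flag $\langle\bar v_1\rangle\subset\langle\bar v_1,\bar v_2\rangle\subset\cdots$ is preserved by upper triangular matrices, so the corresponding chamber is invariant. For uniqueness, an invariant chamber is an orbit of size one, hence by the first part a coordinate flag. Any coordinate full flag other than the standard one has a member $\langle \bar v_{i}:i\in I\rangle$ with $I$ not an initial segment, say $j\in I$ and $i<j$ with $i\notin I$. The elementary matrix $1+E_{ij}$ lies in $\mathbb{U}$ and moves this member, so the flag is not invariant.

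For the statement about $\mathrm{Stab}_N(v)$, we have $\DD(X)^*\subseteq \mathrm{Stab}_N(v)$, so its orbits are unions of $\mathbb{D}^*$-orbits. It therefore suffices to show that no element of $\mathrm{Stab}_N(v)$ identifies two distinct coordinate simplices of the star, and we expect this to be the main obstacle. An element $g\in N$ fixing $v$ satisfies $g\Lambda=\LL\Lambda$ for a line bundle $\LL$, where $\Lambda=\LL_{\BB',\beta}$, so $g$ induces an isomorphism $\Lambda\to\LL\Lambda$. The plan is to exploit the Harder–Narasimhan filtration of $\Lambda=\bigoplus\LL^{B'_i}\vec v_i$. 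In the good sector the gaps in degree exceed $\kappa\ge 2g-2$, so the filtration is $\LL^{B'_1}\subset\LL^{B'_1}\oplus\LL^{B'_2}\subset\cdots$ and is canonical. Any such isomorphism carries it to the corresponding filtration of $\LL\Lambda$. Its reduction at $P$ is the standard flag, so $g$ acts on $V$ up to a scalar by an upper triangular matrix. That matrix lies in $\mathbb{B}$, and $\mathbb{B}$ has the same orbits on flags as $\mathbb{U}$, namely the Bruhat cells. Hence $\mathrm{Stab}_N(v)$ has precisely the same orbits as $\mathbb{D}^*$. The delicate points are checking that $\kappa$ as defined forces the Harder–Narasimhan splitting, and handling the twist by $\LL$, which is harmless since it multiplies all degrees equally.
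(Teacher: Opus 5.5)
Your proposal is correct. Its skeleton matches the paper's: simplices of the star are flags in $k(P)^n$, coordinate flags are the orbit representatives, and the standard flag gives the invariant chamber. The justifications differ, however.

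Where you invoke the Bruhat decomposition $\mathrm{GL}_n(k(P))=\coprod_w\mathbb{B}w\mathbb{P}$ together with $\mathbb{U}\subseteq\mathbb{D}^*\subseteq\mathbb{B}$, the paper argues by hand. It separates coordinate flags by the invariants $\dim(W_r\cap V_i)$ relative to the invariant flag $V_\bullet$, and it finds a coordinate representative in each orbit by an inductive elimination. This is the same Schubert-cell mathematics. Your version is shorter and treats partial flags uniformly. You also prove uniqueness of the invariant chamber (via $1+E_{ij}$), which the paper only asserts.

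The genuine divergence is the claim about $\mathrm{Stab}_N(v)$. The paper only remarks that its orbit-separating invariants are preserved by any group fixing the invariant chamber, leaving implicit why the stabilizer fixes it. You establish this through the canonicity of the Harder--Narasimhan filtration. That argument is valid but heavier than necessary. Any $g\in\mathrm{Stab}_N(v)$ satisfies $g\DD g^{-1}=\DD$, so it normalizes $\DD(X)$ and hence the action of $\mathbb{D}^*$. It therefore permutes the $\mathbb{D}^*$-invariant chambers, and by the uniqueness you already proved it fixes the standard one.

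The ``delicate point'' you flag is also not delicate. A direct sum of line bundles of pairwise distinct degrees always has the degree-ordered HN filtration, so no condition involving $\kappa$ is needed. More directly still, taking determinants in $g\Lambda=\LL\Lambda$ gives $\deg\LL=0$. This forces the matrix of $g$ to be triangular and in fact yields $\mathrm{Stab}_N(v)=K^*\DD(X)^*$.
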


\begin{proof}
The simplices having $v$ as a vertex are in correspondence with the 
flags in the residual space $k(P)^n$. 
The flags corresponding to simplices in the apartment are those whose 
constituent spaces are generated by subsets of the corresponding basis. 
It suffices, therefore, to show that $\mathbb{D}^*$ acts on the flags 
having one separate orbit for each flag of the form $\{0\}=W_0\subset 
W_1\subset \cdots\subset W_s=V$, where each $W_i$ is generated by a 
subset of the basis $\bar\beta=\{\bar{\mathbf{v}}_1,\dots,
\bar{\mathbf{v}}_n\}$, and no additional orbits. 
Observe first that the flag corresponding to 
$\{0\}=V_0\subset V_1\subset \cdots\subset V_n=V$, with $V_i$ is 
generated by $\{\bar{\mathbf{v}}_1,\dots, \bar{\mathbf{v}}_i\}$, 
is an invariant flag, corresponding to an invariant
chamber.

Next we show that no two simplices in the apartment can belong to the same orbit. Consider two flags $\{0\}=W_0\subset W_1\subset \cdots\subset 
    W_s=V$ and $\{0\}=W'_0\subset W'_1\subset \cdots\subset W'_s=V$, 
    with each 
    $W_i$ or $W'_i$ generated by a subset of $\bar{\beta}$. 
    Choose $r$ so that
    $W_r\neq W'_r$, but $W_i= W'_i$ for $i<r$. 
    In particular $W_{r-1}= W'_{r-1}$,
    so they are generated by the same subset 
    $\tau$ of $\bar{\beta}$. On the 
    other hand, $W_r$ and $W'_r$ are generated by $\tau\cup
    \{\bar{\mathbf{v}}_i\}$ and $\tau\cup\{\bar{\mathbf{v}}_{i'}\}$, 
    respectively. Assume $i<i'$ to fix ideas. Then 
    $\dim(W_r \cap V_i) > \dim(W_{r-1} \cap V_i)$, while 
    $\dim(W'_r \cap V_i) = \dim(W'_{r-1} \cap V_i)$.
    This precludes the two
    flags from being in 
    the same orbit. Note also that this proof works 
    for every group preserving the
    invariant chamber, like the 
    stabilizer of the vertex in $N$.
    This proves the last statement.

    Lastly,  we show that every orbit has a simplex in the apartment. 
    If a chamber is in the apartment so are all its faces, 
    so it suffices to assume $s=n$.  If $W_{n-1}=V_{n-1}$, an inductive
    reasoning applies. Assume now that $W_{n-1}\neq V_{n-1}$. Choose 
    $r\leq n-1$ so that $V_{r-1}\subseteq W_{n-1}$, but $V_r\not\subset 
    W_{n-1}$. Note that $\mathbb{D}^*$ acts on $V/V_r$. Now consider the 
    canonical isomorphism $$V/V_r=(W_{n-1}+V_r)/V_r\cong W_{n-1}/
    (W_{n-1}\cap V_r) = W_{n-1}/V_{r-1}.$$ Since 
    $\{\bar{\mathbf{v}}_{r+1},\dots,\bar{\mathbf{v}}_n\}$ is a basis for
    the space on the left, the space $W_{n-1}$ must have a basis
    of the form $\{\bar{\mathbf{v}}_1,\dots,\bar{\mathbf{v}}_{r-1},
    \bar{\mathbf{v}}_{r+1}+b_{r+1}\bar{\mathbf{v}}_r,\dots,
    \bar{\mathbf{v}}_n+b_n\bar{\mathbf{v}}_r\}$. It is easy to find an element
    of $\mathbb{D}^*$ sending the latter basis to $\bar\beta-
    \{\bar{\mathbf{v}}_r\}$, so we can assume that $W_{n-1}$ is the span of 
    that set.  Then the stabilizer $\mathbb{S}$ of $W_{n-1}$ in 
    $\mathbb{D}^*$ consists of matrices having $0$s in the $r$-th row. We 
    can permute the basis by placing $\bar{\mathbf{v}}_r$ at the end. Now
    $\mathbb{S}$ is a group of upper triangular matrices with an expression
    similar to (\ref{formofD}), except for some zeroes in the last column.
    Now the inductive step goes as before.
\end{proof}

\begin{Remark}
A result analogous to Lemma~\ref{enoghcp} is proved in \cite[Cor. 7.14]{BravoLoisel} for arithmetic groups $G=\mathtt{G}(R)$, where $\mathtt{G}$ is an arbitrary split group. This includes, in particular, the case $\mathtt{G}=\mathrm{SL}_n$, but not 
the case where $G=N$ is the normalizer of a maximal order.
\end{Remark}

\begin{prop}\label{smalllo}
    The complement, in the realization $\mathbf{\Sigma}$ of 
    $\Sigma_{\BB,P,\beta}$,
    of a union $\bigcup_{\sigma\in S_n}
    \mathbf{Q}(\sigma,v)$ of good open sectors is contained in a 
    $\kappa'$-neighborhood, for some constant $\kappa'$, of an 
    $(n-2)$-dimensional combinatorially finite complex.
\end{prop}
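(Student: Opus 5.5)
The plan is to reduce the statement to elementary geometry in the coordinates of $\nabla_{\mathbb{R}}$, using the identification of $\Sigma_{\BB,P,\beta}$ with the complex $\mathscr{R}$ fixed before Def.~\ref{d73}. For each $\sigma\in S_n$, write $v_\sigma=[r^\sigma_1,\dots,r^\sigma_n]$ for the tip of the chosen good sector $\mathbf{Q}(\sigma,v_\sigma)$. The union has only $n!$ members, so all tips lie in a bounded region. The proof then proceeds in three steps: a \emph{gap criterion}, a reduction to \emph{slabs around walls}, and an \emph{induction on the dimension of the walls}.

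\emph{Step 1 (gap criterion).} Let $x=[x_1,\dots,x_n]$ be any point. Choose $\sigma$ with $x_{\sigma(1)}\le\cdots\le x_{\sigma(n)}$. Then $x\in v_\sigma+\mathbf{Q}_\sigma$ if and only if
\[
x_{\sigma(i+1)}-x_{\sigma(i)}>r^\sigma_{\sigma(i+1)}-r^\sigma_{\sigma(i)}\qquad\text{for } i=1,\dots,n-1 .
\]
Set $M=\max_{\sigma,i}\bigl(r^\sigma_{\sigma(i+1)}-r^\sigma_{\sigma(i)}\bigr)$, which is finite. If $x$ lies in the complement of the union, then for its own ordering $\sigma$ some consecutive gap is at most $M$. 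Hence the complement is contained in
\[
\bigcup_{i<j}\mathbf{S}_{ij},\qquad \mathbf{S}_{ij}=\{x : |x_i-x_j|\le M\}.
\]

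\emph{Step 2 (slabs are neighborhoods of walls).} Pick a vertex $w\in\Sigma_{\BB,P,\beta}$, and for each pair $i<j$ let $\mathbf{H}_{ij}$ be the affine hyperplane $x_i-x_j=c_{ij}$ through $w$. The hyperplanes $x_i-x_j=\text{const}$ passing through vertices are unions of faces of the tessellation of the type-$\tilde A_{n-1}$ apartment, so each $\mathbf{H}_{ij}$ is the realization of an $(n-2)$-dimensional subcomplex $\mathscr{H}_{ij}$. Every point of $\mathbf{S}_{ij}$ lies within Euclidean distance roughly $M+|c_{ij}|$ of $\mathbf{H}_{ij}$. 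Euclidean distance and combinatorial distance are comparable in an apartment, with constants depending only on $n$. It follows that $\mathbf{S}_{ij}$ lies in a $\kappa_1$-neighborhood of $\mathscr{H}_{ij}$ for some constant $\kappa_1$.

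\emph{Step 3 (induction on dimension).} Each $\mathbf{H}_{ij}$ is an affine space of dimension $n-2$. Its tessellation restricts to a tessellation by the same family of walls $x_k-x_l=\text{const}$, restricted to the hyperplane. Concretely, one merges the coordinates $i$ and $j$, so it behaves like an apartment of one lower rank. On $\mathbf{H}_{ij}$ I choose, for each ordering of the remaining independent coordinates, a sector with a vertex tip whose consecutive gaps exceed $\kappa$. Applying Steps 1 and 2 inside $\mathbf{H}_{ij}$ shows that $\mathscr{H}_{ij}$ is a finite union of closed $(n-2)$-dimensional sector chambers together with a bounded neighborhood of finitely many $(n-3)$-dimensional intersections $\mathbf{H}_{ij}\cap\mathbf{H}_{kl}$. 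Iterating down to dimension $0$, where only finitely many vertices remain, shows that $\bigcup_{i<j}\mathscr{H}_{ij}$ is an $(n-2)$-dimensional combinatorially finite complex in the recursive sense of \S\ref{section main results}. Since there are only finitely many walls, the radii of the successive neighborhoods add up to a single constant. Combining this with Step 2 yields the constant $\kappa'$.

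The main obstacle is Step 3, and specifically matching the recursive definition of combinatorial finiteness. That definition speaks of \emph{cuspidal} sector chambers, whereas the lower-dimensional sectors produced inside $\mathbf{H}_{ij}$ are merely sectors of a wall. I would handle this by noting that the statement concerns subsets of $\mathbf{\Sigma}$ itself, before passing to the quotient. In that setting, a sector chamber of the lower-rank affine space, chosen with gaps larger than $\kappa$, is the natural analog. The flatness that makes such sectors genuinely cuspidal would then be supplied later, through Lemma~\ref{enoghcp}, when Th.~\ref{mainth-1} is proved. A secondary point needing care is checking that the walls through vertices really are subcomplexes and that the comparison between Euclidean and combinatorial distance is uniform. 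Both follow from the explicit $\tilde A_{n-1}$ structure of the standard apartment.
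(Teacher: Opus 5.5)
Your argument is correct and follows the paper's proof. A point outside the union has two coordinates at bounded distance. Your constant $M$, built from the tips, is in fact the precise version of the paper's ``differing by less than $\kappa$''. Such a point therefore lies at bounded distance from a wall $x_i-x_j=\mathrm{const}$ through a vertex, and that wall is the realization of a subcomplex.

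Your Step~3 checks that the finite union of these walls is combinatorially finite in the recursive sense; the paper leaves this implicit. The cuspidal-versus-plain-sector issue you flag is a looseness in the paper's definition, not a gap in your argument.
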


\begin{proof}
It suffices to note that, when a point is in the complement of that
    union, then it corresponds to a point in $\nabla_{\mathbb{R}}$ having
    two coordinates differing by less than $\kappa$, and therefore 
    it is at a bounded 
    distance from the affine subspace of points 
    for which those two coordinates are 
    equal. This subspace need not be a subcomplex, 
    but we claim that a shift is. To be
    precise, we can assume without loss of generality 
    that the two coordinates in 
    question are the first two coordinates. Consider any 
    order $\DD_{\BB,\beta}$
    corresponding to a vertex in the apartment. 
    Then we claim that the affine subspace
    $$\mathbf{L}=\left\{[x_1,\dots,x_n] \big| x_2-x_1=
    \deg(B_2-B_1)\right\},$$
    is a shift that is the realization of a subcomplex. 
    This is so since the points
    of the form $[\deg B_1,\deg B_2,\deg(B_3+m_3P),\dots,
    \deg(B_n+m_nP)]$ are
    the vertices of a simplicial complex whose realization 
    is $\mathbf{L}$, and every simplex
    of this complex corresponds to a simplex of $\Sigma_{\BB,P,\beta}$.
\end{proof}

\section{The ring of global sections of maximal orders}\label{ch4}

In this section we focus on the
algebraic aspects of the ring $\DD(X)$ and the action
of its group of units $\DD(X)^*$ on the neighbors 
of the corresponding vertex $v_\DD$ in
$\mathcal{B}_P(D)$, in order to describe its link in the quotient.
With this in mind, we look for a basis where the ring $\DD(X)$
of global sections has an easily described canonical form. 
By using that, we finally prove Ths. \ref{mainth-1}, \ref{mainth1},
\ref{mainth3} and \ref{mainth5}. In all
that follows, we assume that $\DD=\DD_{\vec{B},\beta}$, for some vector 
divisor $\vec B$ and some basis $\beta$. Since the $\A$-order $D$
is always split on some basis, any connected component of the
full C-complex has one of these vertices. 
Since the symmetric group acts naturally on the
set of vector divisors $\vec B$ it is always possible to assume a suitable
order on the basic vectors that makes $\DD(X)$ simpler. This is achieved 
inducing a partial order in the Picard group.

\begin{Def} Let $B_1,B_2$ be divisors on $X$. We  define a pre-order 
on $\mathrm{Div}(X)$ by   
$$B_1\preceq B_2 \,\,\,\, \Longleftrightarrow\,\,\,\, B_1\leq B_2 + 
\mathrm{div}(f), \,\,\, \text{ for some }f \in K^{*}.$$
If we have simultaneously both $B_1\preceq B_2$ and $B_2\preceq B_1$, then
$B_1\leq B_2+\mathrm{div}(f)$ and $B_2\leq B_1+\mathrm{div}(g)$, 
so in particular
$B_1\leq B_1+\mathrm{div}(fg)$, and $\mathrm{div}(fg)\geq0$, whence $fg$
is a constant. We conclude that $B_2\sim B_1$. This means that the relation 
$\preceq$ induces a partial order in $\mathrm{Pic}(X)$.
The class in $\mathrm{Pic}(X)$ of a divisor $B$ is denoted $\bar B$.
When referring to classes we write $\bar B_1\preceq \bar B_2$ instead of 
$B_1\preceq B_2$.
\end{Def}

 Let $B_i$ and $B_j$ be divisors on $X$ and suppose that there exists 
 $g\neq0$  in $\LL^{B_i- B_j}(X)$. This is equivalent to 
 $0\leq\mathrm{div}(g) + B_i - B_j$, i.e., $ B_j \leq  B_i + \mathrm{div}(g)$,
 or $\bar{B}_j \preceq \bar{B}_i$.
 We conclude that  $\LL^{B_i- B_j}
 (X) \neq \{0\}$ if and only if $\bar{B}_j \preceq \bar{B}_i$.
 It follows that the basis vectors can be ordered in a way
 that an element in $\DD(X)$ can have nontrivial coordinates only on 
 or above a suitable set of diagonal blocks. To be precise, 
 if the vector $\vec B$ has the form $$\vec B=
 \big(B^1_1, B^1_2, \dots,B^1_{n_1},B_{1}^{2},B_{2}^2,\dots, 
B_{n_2}^2,\dots,B_{1}^r, B_{2}^r,\dots, B_{n_r}^r\big),$$
where the following conditions are satisfied:
\begin{itemize}
\item $n=n_1+\dots+n_r$, 
\item $B_i^j$ is linearly equivalent to 
$B_{i'}^{j'}$ if and only if $j=j'$, and 
\item the inequality $B_i^j\preceq B_{i'}^{j'}$ implies $j\geq j'$,
\end{itemize}
then the ring of global sections 
$\DD_{\vec{B},\beta}(X)$ has, in the basis $\beta$, the form
\begin{equation}
\Big(\LL^{B_i-B_j}(X)\Big)_{i,j=1}^{n}\cong\begin{pmatrix}
 %& & &\\
\delta_1\mathbb{M}_{n_1}(k)\delta_1^{-1} & \ast &\cdots  & \ast \\
 0 & \delta_2\mathbb{M}_{n_2}(k)\delta_2^{-1} &  &\vdots \\
 \vdots & & \ddots & \ast \\ \\
 0 &\cdots & 0 & \delta_r\mathbb{M}_{n_r}(k)\delta_r^{-1}
 %\\ & & & 
 \end{pmatrix} \label{block_order},
\end{equation}
where $\delta_d$ is a diagonal matrix $\delta_d=\mathrm{diag}(f_{d,1}, \dots, f_{d,n_d})$ with $\mathrm{div}(f_{d,i})=B_{i}^{d}-B_{1}^d$.
When this is the case we say that the vector divisor $\vec B$ is 
conveniently ordered, or that satisfies condition \textbf{CO}.

The preceding expression yields an analogue of the Levi decomposition
of a parabolic subgroup of $\mathrm{GL}_n$. We refer to the diagonal blocks 
in the preceding decomposition as the Levi blocks of the order 
$\DD_{\vec B,\beta}$, and adopt other notations that exploit the 
similarity between this decomposition and suitable decompositions 
of parabolic subgroups of algebraic groups. This depends 
in principle on the basis $\beta$, since we might have a non-trivial identity
of the form $\DD_{\vec B,\beta}=\DD_{\vec B',\beta'}$, but the results below
show that this is not the case.
%To further simplify the subsequent argument, we assume \orange{
%until Cor. \ref{c851}}  that $\beta=\epsilon$ is the canonical basis,
%and write $\DD_{\vec{B}}$ instead of $\DD_{\vec{B},\epsilon}$.
The semisimple algebra associated to the order $\DD_{\vec{B},\beta}$
is, by definition, the algebra
\begin{equation}\label{eq ss part}
    \mathbb{D} =\DD_{\vec{B},\beta}(X)/\mathcal{R}\cong 
    \bigoplus_{d=1}^r\mathbb{M}_{n_d}(k),
\end{equation}
where $\mathcal{R}$ is the nilradical of the algebra $\DD_{\vec{B},\beta}(X)$. 
In particular, the sizes $n_i$ of the blocks depend only on the conjugacy 
class of the order. 

By a generalized permutation matrix, we mean a matrix permuting the factors 
of the decomposition $K^n=\bigoplus_{i=1}^{n}K\vec{e}_i$. The sub-sum 
$K^n_s=\bigoplus_{i=n_1+\dots+n_{s-1}+1}^{n_1+\dots+n_s}K\vec{e}_i$
is called the $s$-th block subspace associated to $\vec B$.
Note that each Levi block above can be seen as a linear map
on the corresponding block subspace.
The decomposition $K^n=\bigoplus_{s=1}^rK_s^n$ is called the
block decomposition associated to $\vec B$. By definition,
the Levi subgroup associated to $\DD_{\vec{B},\beta}$ is the group of matrices
preserving this decomposition. The associated parabolic subgroup
is the set of matrices preserving each sub-sum $\oplus_{s=1}^{s'} K_s^n$, 
$1 \leq s' \leq r$ (compare to \cite[\S 3.3]{AlgGrGLn}).

Let $A, B$ be finite-dimensional $k$-algebras, $J(A)$, $J(B)$ their 
Jacobson radicals, and let $f: A\to B$ be a surjective algebra 
homomorphism whose kernel contains no nonzero idempotents. 
Next statements hold from Idempotent Lifting Theorem 
(See \cite[Th. 6.18]{Li}):

 \begin{enumerate}
 \item[(i)] The homomorphism $f$ maps $J(A)$ onto $J(B)$ and $A^*$ onto $B^*$.
\item[(ii)] For any primitive idempotent $i$ in $A$, $f(i)$ is a primitive 
idempotent in $B$.
\item[(iii)] For any idempotent $j$ in $B$ there is an idempotent $i$ 
in $A$ such that $f(i) = j$. Furthermore, if $j$ is primitive, then 
any such $i$ is also primitive.
\item[(iv)] Any two primitive idempotents $i, i'$ in $A$ are conjugate in
$A$ if and only if $f(i), f(i')$ are conjugate in $B$.
\end{enumerate}

\begin{lem}\label{ncor1}
In the preceding notation,
if $1=P_1+\cdots+P_n$ is a decomposition of $1$ as a sum of 
orthogonal idempotents in $B$, then there are orthogonal liftings $Q_i$ 
of $P_i$ to $A$ such that $1=Q_1+\cdots+Q_n$.
\end{lem}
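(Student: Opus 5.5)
We argue by induction on $n$, using only properties (i) and (iii) of the Idempotent Lifting Theorem quoted above. For $n=1$ there is nothing to prove: $P_1=1_B$, and we take $Q_1=1_A$, which is a lifting because $f$ is a surjective homomorphism of unital algebras.

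For the inductive step, suppose $n\geq 2$. Use (iii) to lift $P_1$ to an idempotent $Q_1\in A$. Put $e=1-Q_1$; this is an idempotent orthogonal to $Q_1$, and $f(e)=1-P_1=P_2+\cdots+P_n$. Consider the corner algebra $A'=eAe$, whose unit is $e$, and the corner algebra $B'=f(e)Bf(e)$, whose unit is $f(e)$. The restriction $f'=f|_{A'}:A'\to B'$ is a unital algebra homomorphism. It is surjective, since every $b\in B'$ satisfies $b=f(e)bf(e)$; writing $b=f(a)$, we get $b=f(eae)$ with $eae\in A'$. Its kernel is $\ker f\cap A'$, so it contains no nonzero idempotents of $A'$. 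Thus $f'$ satisfies the hypotheses of the preceding discussion.

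The elements $P_2,\dots,P_n$ lie in $B'$: since $P_iP_1=P_1P_i=0$, we have $f(e)P_i=P_i=P_if(e)$. They are orthogonal idempotents in $B'$ summing to its unit $f(e)$. By the induction hypothesis applied to $f'$, there are orthogonal idempotents $Q_2,\dots,Q_n\in A'$ with $f(Q_i)=P_i$ and $Q_2+\cdots+Q_n=e$.

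Each such $Q_i$ satisfies $Q_i=eQ_ie$. Since $eQ_1=Q_1e=0$, we get $Q_iQ_1=Q_1Q_i=0$. Hence $Q_1,\dots,Q_n$ are pairwise orthogonal idempotents in $A$, and $Q_1+\cdots+Q_n=Q_1+e=1$, which completes the induction.

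The only point needing care is that the corner map $eAe\to f(e)Bf(e)$ again satisfies the hypotheses, namely surjectivity and kernel free of nonzero idempotents, which was checked above.
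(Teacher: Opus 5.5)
Your proof is correct and is essentially the paper's argument: you lift $P_1$ via (iii), pass to the corner map $(1-Q_1)A(1-Q_1)\to f(1-Q_1)Bf(1-Q_1)$, and apply induction to $P_2,\dots,P_n$. You also check explicitly that the corner map is surjective with a kernel free of nonzero idempotents, which the paper leaves implicit, and you rightly skip the paper's preliminary reduction to primitive $P_i$, which (iii) makes unnecessary.
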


\begin{proof}
It can be assumed that each $P_i$ is primitive or we can refine the 
decomposition. The existence of a lifting $Q_1$ of $P_1$ to $A$ follows 
from statement (iii) above. Now $P_2,\dots,P_n$ are in the image
$B'\subseteq B$ of $A'=(1-Q_1)A(1-Q_1)$. The result follows by applying 
the inductive hypotheses to the restriction of $f':A'\rightarrow B'$
of $f$. Note that the unity $1-P_1$ of $B'$ equals the sum $P_2+\cdots+P_n$.
\end{proof}

\begin{lem}\label{ncor0}
In the preceding notation,
any two idempotents in $A$ are conjugate if and only if their images in $B$
are conjugate. In particular, two idempotents with the same image are 
conjugate.
\end{lem}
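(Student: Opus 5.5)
The forward implication is immediate. If $e'=ueu^{-1}$ with $u\in A^*$, then $f(e')=f(u)f(e)f(u)^{-1}$, and $f(u)\in B^*$ because $f$ is a ring homomorphism. The converse will be reduced to statement (iv), which handles primitive idempotents, using the Krull--Schmidt theorem for finitely generated projective modules over a finite-dimensional algebra. The basic dictionary is standard: for idempotents $e,e'$ of a ring $A$, they are conjugate by a unit if and only if $eA\cong e'A$ and $(1-e)A\cong(1-e')A$ as right $A$-modules. Indeed, given isomorphisms $\alpha:eA\to e'A$ and $\gamma:(1-e)A\to(1-e')A$, the sum $\alpha\oplus\gamma$ is an automorphism of $A_A=eA\oplus(1-e)A$, hence left multiplication by a unit $u$, and one checks $ueu^{-1}=e'$. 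Over a finite-dimensional algebra, Krull--Schmidt gives cancellation. Since $eA\oplus(1-e)A\cong A\cong e'A\oplus(1-e')A$, the isomorphism $eA\cong e'A$ forces $(1-e)A\cong(1-e')A$. So over $A$, and likewise over $B$, conjugacy of idempotents is equivalent to $eA\cong e'A$.

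The first step is to decompose. Write $e=e_1+\cdots+e_s$ and $e'=e'_1+\cdots+e'_t$ as sums of orthogonal primitive idempotents of $A$; this is possible since $A$ is finite-dimensional. Each $f(e_j)$ is nonzero, because $\ker f$ contains no nonzero idempotent, and it is primitive by (ii). The images $f(e_j)$ are orthogonal, so $f(e)B=\bigoplus_j f(e_j)B$ is a decomposition into indecomposable projectives, and similarly for $f(e')$.

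Next, transfer the hypothesis and lift it back. If $f(e)$ and $f(e')$ are conjugate in $B$, then $f(e)B\cong f(e')B$. Krull--Schmidt then gives $s=t$ and a permutation $\tau$ with $f(e_j)B\cong f(e'_{\tau(j)})B$ for every $j$. By the dictionary applied in $B$, the primitive idempotents $f(e_j)$ and $f(e'_{\tau(j)})$ are conjugate in $B$. By (iv), $e_j$ and $e'_{\tau(j)}$ are conjugate in $A$, so $e_jA\cong e'_{\tau(j)}A$. Summing over $j$ gives $eA\cong e'A$, using orthogonality to identify $eA=\bigoplus_j e_jA$. By cancellation, $(1-e)A\cong(1-e')A$, and so $e$ and $e'$ are conjugate. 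The ``in particular'' clause is the special case $f(e)=f(e')$.

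The step needing the most care is the dictionary between conjugacy and isomorphism of the modules $eA$ in the finite-dimensional case. This includes the cancellation that lets us drop the condition on the complements $1-e$, and applying it in both $A$ and $B$ so that (iv) can be used block by block. Everything else is bookkeeping with orthogonal decompositions, in the same spirit as Lemma \ref{ncor1}.
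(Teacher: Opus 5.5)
Your proof is correct, but it takes a different route from the paper's.

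The paper stays at the level of idempotent lifting. It first uses (i) to lift a unit of $B$, so that it may assume $f(P')=f(P)$. It then applies Lemma \ref{ncor1} to the corner map $P'AP'\to f(P)Bf(P)$. This splits $P'=P'_1+\cdots+P'_n$ with $f(P'_i)=f(P_i)$, so the two primitive decompositions are matched term by term by construction. Statement (iv) then conjugates $P'_1$ to $P_1$, and the argument inducts in the corner ring $(1-P_1)A(1-P_1)$.

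You instead translate conjugacy of idempotents into isomorphism of the projectives $eA$, with Krull--Schmidt cancellation removing the condition on $1-e$. Krull--Schmidt in $B$ then supplies the matching $\tau$ of indecomposable summands. As a result, (iv) is applied once per summand, with no induction, no lifting of units, and no use of Lemma \ref{ncor1}.

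The price is importing the Krull--Schmidt theorem. That is legitimate here because $A$ and $B$ are finite-dimensional.

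In return, you avoid the bookkeeping the paper compresses into ``Induction applies''. After conjugating $P'_1$ to $P_1$ by a unit $u$, the images of the remaining summands are only conjugate to $f(P_2),\dots,f(P_n)$, namely by $f(u)$, which commutes with $f(P_1)$. So the inductive hypothesis must be the full ``conjugate images'' statement in the corner algebra.

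Your version also makes the underlying reason visible. Conjugacy classes of idempotents correspond to isomorphism classes of finitely generated projective modules, and (ii) and (iv) say exactly that $f$ matches the indecomposable ones.
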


\begin{proof}
Any idempotent can be written as a sum of orthogonal primitive
idempotents (cf. \cite[Th. 6.2]{Li}). Assume the images of the 
idempotents $P$
and $P'$ are conjugate in $B$, and write $P=P_1+\cdots+P_n$ as a 
sum of primitive idempotents. Replacing $P'$ by a conjugate we can assume that $f(P')=f(P)=f(P_1)+\cdots+f(P_n)$. Then $f(P_i)$ is in the image $B'=f(P)Bf(P)\subseteq B$ of $P'AP'$. By the preceding lemma
we can write $P'$ as a sum of idempotents $P'=P'_1+\cdots+P'_n$
with $f(P_i')=f(P_i)$. Since $P_1$ and $P_1'$ are conjugate,
by statement (iv) above,
we can apply a conjugation and assume they are equal. Then
both $P_2,\dots,P_n$ and $P'_2,\dots,P'_n$ are in the algebra
$(1-P_1)A(1-P_1)$. Induction applies.
\end{proof}

The same reasoning can be used to prove the following result:

\begin{lem}\label{ncor}
In the notations of the previous result, if $1=P_1+\cdots+P_n$ and $1=Q_1+\cdots+Q_n$ are two decompositions of the identity into orthogonal idempotents in $A$ that satisfy $f(P_i)=f(Q_i)$, for $i=1,\dots,n$, then there exists an element $E\in A^*$ satisfying $EQ_iE^{-1}=P_i$ for $i=1,\dots,n$.\qed
\end{lem}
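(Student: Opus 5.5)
We argue by induction on $n$, following the scheme of Lemma \ref{ncor0}. The idea is to conjugate $Q_1$ to $P_1$ by a unit of $A$, then pass to the corner algebra $(1-P_1)A(1-P_1)$, where the remaining idempotents on both sides sum to the identity, and apply the inductive hypothesis there. The case $n=1$ is trivial, since then $P_1=Q_1=1$ and we may take $E=1$.

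\textbf{First step.} Since $f(P_1)=f(Q_1)$, Lemma \ref{ncor0} gives a unit $E_1\in A^*$ with $E_1Q_1E_1^{-1}=P_1$. Replacing each $Q_i$ by $E_1Q_iE_1^{-1}$, we may assume $Q_1=P_1$. This keeps the $Q_i$ orthogonal with sum $1$. The relation $f(Q_i)=f(P_i)$ may fail for $i\ge 2$, because $f(E_1)$ need not centralize the $f(P_i)$. It only becomes $f(E_1)f(P_i)f(E_1)^{-1}=f(Q_i)$. This is the main obstacle.

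\textbf{Repairing the images.} We choose $E_1$ better instead. Let $u=f(E_1)\in B^*$ and put $e=f(P_1)$. Then $u$ commutes with $e$, because $ueu^{-1}=f(P_1)=e$. Hence $u$ preserves the decomposition $1=e+(1-e)$, and $u=ueu+u(1-e)$, with $eue$ a unit of $eBe$ and $(1-e)u(1-e)$ a unit of $(1-e)B(1-e)$. By (i) applied to the corner map $(1-P_1)A(1-P_1)\to(1-e)B(1-e)$, the unit $(1-e)u^{-1}(1-e)$ lifts to a unit $c$ of $(1-P_1)A(1-P_1)$. Since $Q_1=P_1$, the element $E_1'=P_1+c$ is a unit of $A$ commuting with $P_1$. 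Conjugating by $E_1'E_1$ instead of $E_1$ still sends $Q_1$ to $P_1$, and on images its action on $(1-e)B(1-e)$ is trivial. So after this correction $f(Q_i)=f(P_i)$ holds for all $i$. The corner map has kernel inside $\ker f$, so its kernel contains no nonzero idempotents and (i) applies.

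\textbf{Inductive step.} The elements $P_2,\dots,P_n$ and $Q_2,\dots,Q_n$ now lie in $A'=(1-P_1)A(1-P_1)$ and are orthogonal idempotents summing to its identity $1-P_1$. The restriction $f'\colon A'\to(1-e)B(1-e)$ is surjective, and its kernel contains no nonzero idempotents. By induction there is a unit $E'\in A'^*$ with $E'Q_iE'^{-1}=P_i$ for $i\ge2$. Then $E''=P_1+E'$ is a unit of $A$ fixing $P_1$ and conjugating each remaining $Q_i$ to $P_i$. The composite of $E''$ with the corrected $E_1'E_1$ is the required $E$.
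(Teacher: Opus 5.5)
Your proof is correct and follows the paper's route: the paper simply invokes the inductive corner-algebra argument of Lemma~\ref{ncor0}. Your correction of $E_1$ by the unit $P_1+c$, which makes $f(E_1'E_1)=eue+(1-e)$ act trivially on $(1-e)B(1-e)$, supplies a detail that sketch leaves implicit: the equalities $f(Q_i)=f(P_i)$ for $i\ge 2$ must be restored before inducting. One slip: the decomposition of $u$ should read $u=eue+(1-e)u(1-e)$, not $u=ueu+u(1-e)$.
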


\begin{prop}\label{prop_canonical}
Let $\DD=\DD_{\vec{B},\epsilon}$ and $\DD'=\DD_{\vec{B'},\epsilon}$ be 
two maximal $X$-orders that the canonical basis $\epsilon$ splits. 
Assume that $\BB$ satisfies condition \textbf{CO}, as defined above.
Any matrix $C\in \mathrm{GL}_n(K)$ satisfying $\DD' = C\DD C^{-1}$  
has the form $C= PU$, where $P$ is a permutation matrix and 
\begin{equation}\label{equnipotent}
U = \begin{pmatrix}
 %& & &\\
C_1 & * &\cdots  & * \\
 0 & C_2 &  &\vdots \\
 \vdots & & \ddots & * \\
 0 &\cdots & 0  & C_r%\\ & & &
 \end{pmatrix}, 
\end{equation}
is a matrix in the parabolic subgroup associated to $\DD$.
Moreover, $C_d\in c_d\eta_d\mathbb{M}_{n_d}(k)\delta_d^{-1}$, where
$c_d\in K^*$, $\eta_d$ a diagonal matrix depending on $\BB$ and $\BB'$,
and $\delta_d$ is the diagonal matrix appearing in the corresponding Levi
block in \eqref{block_order}. Furthermore, $\vec{B}'$ is coordinate-wise 
linearly equivalent to $\sigma \cdot\vec{B}+B_0\vec \delta$, for some 
divisor $B_0$, where $\sigma$ is the permutation associated to $P$.
 \end{prop}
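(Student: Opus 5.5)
Conjugation by $C$ induces a ring isomorphism $\DD(X)\to\DD'(X)$ on global sections, so the plan is to push the structure of these rings through the idempotent lemmas, Lemmas \ref{ncor1}, \ref{ncor0} and \ref{ncor}. Let $A=\DD(X)$ and $A'=\DD'(X)$, finite-dimensional $k$-algebras. Let $e_1,\dots,e_n$ be the diagonal matrix units in the basis $\epsilon$; they lie in both $A$ and $A'$ because the diagonal entries of these orders are $\oo_X(X)=k$. Then $1=\sum_i Ce_iC^{-1}$ and $1=\sum_i e_i$ are two decompositions of $1$ in $A'$ into orthogonal primitive idempotents.

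Take $f:A'\to A'/\mathcal{R}'\cong\bigoplus_d\mathbb{M}_{n'_d}(k)$ as in \eqref{eq ss part}. Its kernel is nilpotent, so it contains no nonzero idempotents. In the semisimple quotient, any two decompositions of $1$ into $n$ orthogonal primitive idempotents are conjugate up to reordering. Hence there are a permutation $\sigma$ and a unit $u$ of the quotient with $uf(Ce_iC^{-1})u^{-1}=f(e_{\sigma(i)})$. Lift $u$ to a unit of $A'$ using (i). Lemma \ref{ncor} then gives $E\in A'^*$ with $ECe_iC^{-1}E^{-1}=e_{\sigma(i)}$ for all $i$. Let $P$ be the permutation matrix with $Pe_iP^{-1}=e_{\sigma(i)}$. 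Then $U:=P^{-1}EC$ commutes with every $e_i$, so $U$ is diagonal, and it satisfies $U\DD U^{-1}=P^{-1}\DD'P$ because $E$ normalizes $\DD'$.

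Write $U=\mathrm{diag}(u_1,\dots,u_n)$. By \eqref{eq action of d}, the relation above gives $\DD_{\BB+\mathrm{div}(\vec u),\epsilon}=\DD_{\sigma^{-1}\cdot\BB',\epsilon}$. Equality of split orders forces equality of divisor vectors up to $\vec\delta B_0$: compare the entries $\LL^{B_i-B_j}$ at each place. This gives the last claim, that $\BB'$ is coordinatewise linearly equivalent to $\sigma\cdot\BB+B_0\vec\delta$ after renaming. It remains to recover $C=PU'$ with $U'$ in the parabolic of $\DD$. Since $C=E^{-1}PU$, we need $P^{-1}E^{-1}P\in P^{-1}A'^*P=(UAU^{-1})^*$, so that $C=P\cdot(P^{-1}E^{-1}P)U$. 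Here $(UAU^{-1})^*U=UA^*$, so $C=P\,U\,a$ with $a\in A^*$. By \eqref{block_order}, $a$ is block upper triangular with Levi blocks in $\delta_d\mathbb{M}_{n_d}(k)\delta_d^{-1}$. The diagonal matrix $U$ is constant up to $c_d$ on each block modulo the divisor data: $\mathrm{div}(u_i)$ is determined by $\BB,\BB'$ up to constants, which is where $\eta_d$ comes from. This yields $C_d\in c_d\eta_d\mathbb{M}_{n_d}(k)\delta_d^{-1}$ after absorbing factors.

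The main obstacle is the bookkeeping. We must check that $UaU^{-1}$ remains block upper triangular for $\DD$'s block order, which we get by choosing $\sigma$ compatibly with the \textbf{CO} ordering, composing with a block-internal permutation if needed. We must also identify $\eta_d$ and $c_d$ explicitly from the divisor identity. A secondary point is that idempotents in a semisimple algebra are conjugate up to permutation of indices; this follows from Wedderburn theory and matching ranks within each simple factor.
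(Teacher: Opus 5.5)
Your proof is correct, and it takes a genuinely different route from the paper's.

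\textbf{The paper's route.} The paper works only with the \emph{central} idempotents of the semisimple quotient. It lifts the block idempotents to diagonal matrices, then applies a block permutation to $\BB'$ and has to check that the result is again \textbf{CO}. Lemma~\ref{ncor} then gives $E\in\DD(X)^*$ with $CE$ in the Levi subgroup. The shape of the Levi blocks comes from Skolem--Noether. The linear equivalence is proved separately: a matrix-unit computation gives $\bar B_{ij}\preceq\bar B'_{ij}$ and the reverse inequality, and one concludes by antisymmetry of $\preceq$ on $\mathrm{Pic}(X)$.

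\textbf{Your route.} You match the full sets of primitive idempotents $\{e_i\}$ and $\{Ce_iC^{-1}\}$ inside $\DD'(X)$. In effect, you conjugate the two split diagonal tori by a unit of $\DD'(X)$. This yields a Bruhat-type factorization $C=E^{-1}PU$ with $E\in\DD'(X)^*$, $P$ a permutation matrix and $U$ diagonal. That factorization gives you three things at once:
\begin{itemize}
\item[(a)] the divisor statement becomes an exact identity of divisor vectors, read off from \eqref{eq action of d}, rather than a mere linear equivalence;
\item[(b)] rewriting $C=P\,(Ua)$ with $a\in\DD(X)^*$ gives the parabolic claim directly from \eqref{block_order};
\item[(c)] you need neither Skolem--Noether, nor the $\preceq$ computation, nor a preliminary reordering of $\BB'$.
\end{itemize}
The paper's route, in exchange, stays closer to the Levi/parabolic analogy and exhibits explicitly that the block structure of $\DD'$ matches that of $\DD$. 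Your $P$ is a full index permutation, whereas the paper's is essentially a block permutation; the statement allows either.

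\textbf{The ``main obstacle'' is not there.} $U$ is diagonal and $a$ is block upper triangular for the \textbf{CO} order of $\BB$. So $Ua$, and likewise $UaU^{-1}$, is automatically block upper triangular, and no compatible choice of $\sigma$ is needed.

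For the Levi blocks, the $d$-th diagonal block is $U_da_d$ with $a_d\in\delta_d\mathrm{GL}_{n_d}(k)\delta_d^{-1}$. Your exact divisor identity determines each ratio $u_i/u_j$ within a block up to $k^*$. So take $c_d$ to be one diagonal entry of $U_d$, and absorb the leftover $k^*$ factors into $\mathbb{M}_{n_d}(k)$, which is allowed because diagonal matrices commute with $\delta_d$. The resulting $\eta_d$ is exactly the analogue of $\delta_d$ built from $\sigma^{-1}\cdot\BB'$, which gives $C_d\in c_d\eta_d\mathbb{M}_{n_d}(k)\delta_d^{-1}$.
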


\begin{proof}
Denote by $\mathcal{R}$ and $\mathcal{R}'$ the Jacobson radicals of the global section rings $\DD(X)$ and $\DD'(X)$ respectively.
Applying a permutation if needed we can assume that $\vec B'$ also satisfies condition \textbf{CO}.
By Eq. \eqref{eq ss part}, the associated semisimple algebras are: 
\begin{equation}\label{dec2}
\mathbb{D}=\DD(X)/\mathcal{R}\cong  \displaystyle 
\bigoplus_{d=1}^r\mathbb{M}_{n_d}(k)\hspace{.5cm} \text{and} \hspace{.5cm} 
\mathbb{D}'= \DD'(X)/\mathcal{R}'\cong \displaystyle 
\bigoplus_{d=1}^{r'}\mathbb{M}_{m_d}(k).
\end{equation}
If there exists a matrix $C\in \mathrm{GL}_n(K)$ such that 
$\DD'=C\DD C^{-1}$, then the semisimple
parts $\mathbb{D}$ and $\mathbb{D}'$ are isomorphic, and,
by the Artin-Wedderburn Theorem \cite[Ch. XVII]{Lang}, the 
two preceding decomposition coincide up to permutation,  
i.e., $r=r'$, and there is a permutation $\sigma$
of $\{1,\dots,r\}$ such that $n_i=m_{\sigma(i)}$. The permutation
$\sigma$ is not unique, but we can make it precise by choosing it as 
follows: We let $P_1,\dots,P_r$ be the central idempotents in 
$\mathbb{D}$, defined from the decomposition in (\ref{dec2})
and we let $\tilde P_1,\dots,\tilde P_r$ be liftings to 
$\DD(X)$ satisfying $\tilde P_1+\dots+\tilde P_r=1$ 
(See Lemma \ref{ncor1}). Analogously, let $P'_1,\dots,P'_r\in 
\mathbb{D}'$ be the central idempotents defined from the second 
decomposition in (\ref{dec2}).
Then we assume that $C\tilde P_iC^{-1}$ is a lifting of $P'_{\sigma(i)}$.
If we write $S$ for any permutation matrix preserving contiguous blocks
and inducing the permutation $\sigma$ in the latter, and if we
replace $\DD'$ by $S^{-1}\DD'S$, we can assume
that $C\tilde P_iC^{-1}$ is a lifting of $P'_i$.  
In other words, we can assume that the permutation $\sigma$ is trivial 
and $\mathbb{D} = \mathbb{D}'$.
We assume this in the rest of the proof.
It is unclear a priori whether $\vec B'$ still satisfies
condition \textbf{CO} after the last adjustment, so this is proved 
in next paragraph.

Note that we can assume that the lifting $\tilde{P}_i$ is a diagonal 
 matrix, and in this case $\tilde{P}_i\in\DD'(X)\cap\DD(X)$. Furthermore, 
 both $\tilde{P}_i\in\DD(X)$ and $C^{-1}\tilde{P}_iC\in C^{-1}\DD'(X)C=
 \DD(X)$ have the same image in $\mathbb{D}$. 
It follows from Lemma \ref{ncor} that there is a matrix $E\in\DD(X)^*$ such
that $E\tilde{P}_iE^{-1}=C^{-1}\tilde{P}_iC$.
This means that $CE$ belongs to
the Levi subgroup associated to $\DD=\DD_{\vec B,\beta}$.
This implies that $C$ belongs to the corresponding parabolic subgroup, 
since so does $E\in\DD(X)^*$.
Furthermore, it follows, under the preceding assumptions, that
$\vec B'$ satisfies condition \textbf{CO}. In fact, if this is not the 
case, the ring of global sections $\DD'(X)$ has a matrix $\Delta$ with a 
nontrivial coefficient in a position that lies under the 
diagonal blocks from (\ref{block_order}). Then $C^{-1}\Delta C\in\DD(X)$ 
has the same property by the definition of the associated
parabolic subgroup. The claim follows.

Note that any matrix in a diagonal block of $C_d$ takes the 
corresponding  block of $\DD(X)$, which is of the form 
$\delta_d\mathbb{M}_{n_d}(k)\delta_d^{-1}$, to the corresponding block
of $\DD'(X)$, which has the form $\eta_d\mathbb{M}_{n_d}(k)\eta_d^{-1}$, 
for a diagonal matrix $\eta_d$ whose definition is analogous to that
of $\delta_d$, but replacing $\vec B$ by $\vec B'$.
In particular, conjugation by $\eta_d^{-1}C_d\delta_d$ defines
an automorphism of $\mathbb{M}_{n_d}(k)$, which is inner by Skolem-Noether Theorem \cite[Th. 2.7.2]{gille}. Since two matrices define the same conjugation if and only if
they coincide up to a scalar factor, this finishes the proof of the second statement of the proposition.

Now, we prove the last statement, i.e., that $\vec{B}'$ is 
coordinate-wise linearly equivalent with $\sigma \cdot\vec{B}$, where 
$\sigma$ is the permutation associated to $P$. As in the second 
paragraph of this proof, we are reduced to the case where $\sigma$ is trivial.
Write $\DD=\DD_{\BB,\beta} = \left(\LL^{B_{ij}}\right)_{i,j=1}^{n}$, 
where $B_{ij} := B_i - B_j$.
Let us write $C=(c_{ij})_{i,j=1}^{n}$ and $C^{-1}=(w_{ij})_{i,j=1}^n$. 
Recall that $\DD' = C\DD C^{-1}$.
Choose indexes $i,s,r,j$ such that $c_{ir}$ and $w_{sj}$
denote nonzero entries of $C_d$ and $C_{d'}^{-1}$ respectively.
Note that this assumes that all indexes are in the range
defined by the corresponding block.
Then, the lattice:
$$\LL^{B_{ij}'}E_{ij} = E_{ii}\DD_{\vec{B}',\beta}E_{jj} = E_{ii}
(C\DD_{\vec{B},\beta}C^{-1})E_{jj}= (E_{ii}C)\DD_{\vec{B},\beta}(C^{-1}E_{jj}),$$
contains the lattice:
$$\begin{array}{rcl}
(E_{ii}C)(\LL^{B_{rs}}E_{rs})(C^{-1}E_{jj})
 &=& \left(E_{ii}\left[\displaystyle 
\sum_{l,k=1}^{n}c_{lk}E_{lk}\right]\right)\left(\LL^{B_{rs}}E_{rs}\right)
\left(\displaystyle \left[\sum_{a,b=1}^{n}w_{ab}E_{ab}\right] E_{jj}\right),\\ \\
 &=& \LL^{B_{rs}}\left(\displaystyle 
 \sum_{k=1}^{n}c_{ik}E_{ik}\right)E_{rs}E_{ss}\left(\displaystyle 
 \sum_{a=1}^{n}w_{aj}E_{aj}\right),\\ \\
 &=& \LL^{B_{rs}}\left(\displaystyle 
 \sum_{k=1}^{n}c_{ik}E_{ik}E_{rs}\right)\left(\displaystyle 
 \sum_{a=1}^{n}w_{aj}E_{ss}E_{aj}\right),\\ 
 &=& c_{ir}w_{sj}\LL^{B_{rs}}E_{is}E_{sj}, \\ 
 &=& c_{ir}w_{sj}\LL^{B_{rs}}E_{ij}.
 \end{array}$$
 Therefore $\LL^{B_{ij}'}\supseteq \LL^{B_{rs}-\mathrm{div}(c_{ir}w_{sj})}$, which is 
 equivalent to $B_{ij}' \geq  B_{rs}-\mathrm{div}(c_{ir}w_{sj})$. This implies 
 $\bar{B}_{rs} \preceq  \bar{B}_{ij}'$ in $\mathrm{Pic}(X)$. 
 Since the indexes $i$ and $r$
 correspond to the same block, and so do $j$ and $s$, the divisor
 $\bar{B}_{rs}$ is linearly equivalent to $\bar{B}_{ij}$. We conclude 
 that $\bar{B}_{ij} \preceq  \bar{B}_{ij}'$. Now the reverse inequality
is proved by transposing the roles of $\vec B$ and $\vec B'$.
The result follows if we set $B_0=B'_1-B_1$.
\end{proof}

\begin{Corollary}\label{c851}
In the notations of Prop.~\ref{prop_canonical},
when $\BB$ fails to satisfy the condition \textbf{CO}, any
matrix $C\in \mathrm{GL}_n(K)$ satisfying 
$\DD' = C\DD C^{-1}$ has the form $C= PUP'$, where $P$ and 
$P'$ are two permutation matrices and $U$ is as defined in Eq. \eqref{equnipotent}.
Furthermore, $\vec{B}'$ is coordinate-wise linearly equivalent with 
$\sigma \cdot\vec{B}+B_0\vec \delta$, for some divisor $B_0$,
where $\sigma$
is the permutation associated to $PP'$.
\end{Corollary}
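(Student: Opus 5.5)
The plan is to reduce to Prop.~\ref{prop_canonical} by permuting the basis. Since $\BB$ fails \textbf{CO}, choose a permutation $\tau\in S_n$ such that $\tau\cdot\BB$ satisfies \textbf{CO}. Such a $\tau$ exists, since the classes $\bar B_i$ can be grouped into linear equivalence classes and the partial order $\preceq$ on $\mathrm{Pic}(X)$ can be extended to a total order on these finitely many classes. Let $P_\tau$ be the corresponding permutation matrix. Conjugating by a permutation matrix permutes the diagonal entries, so
$$P_\tau\DD_{\BB,\epsilon}P_\tau^{-1}=\DD_{\tau\cdot\BB,\epsilon},$$
possibly with $\tau$ replaced by $\tau^{-1}$ according to the convention in Eq.~\eqref{eq action of weyl group}. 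Put $\DD_0=\DD_{\tau\cdot\BB,\epsilon}$. This order is split by $\epsilon$ and its divisor vector satisfies \textbf{CO}.

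Next, write $\DD'=C\DD C^{-1}=(CP_\tau^{-1})\DD_0(CP_\tau^{-1})^{-1}$ and apply Prop.~\ref{prop_canonical} to the pair $\DD_0,\DD'$ with conjugating matrix $C'=CP_\tau^{-1}$. This gives $C'=PU$, with $P$ a permutation matrix and $U$ in the parabolic subgroup attached to $\DD_0$, of the shape in Eq.~\eqref{equnipotent}. Hence $C=PUP'$ with $P'=P_\tau$. Here $U$ is block upper triangular with respect to the Levi blocks of $\DD_0$, which is the sense in which Eq.~\eqref{equnipotent} is to be read.

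For the divisor statement, Prop.~\ref{prop_canonical} also says that $\BB'$ is coordinatewise linearly equivalent to $\sigma\cdot(\tau\cdot\BB)+B_0\vec\delta$, where $\sigma$ is the permutation of $P$. Since the action of $S_n$ on $\mathrm{Div}(X)^n$ is a group action and the permutation-matrix map $\sigma\mapsto P_\sigma$ is a homomorphism, $\sigma\tau$ is the permutation associated to $PP'$.

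The only step needing care is the bookkeeping of conventions: whether conjugation by $P_\tau$ realizes $\tau$ or $\tau^{-1}$ on divisor vectors, and whether $\sigma\mapsto P_\sigma$ is a homomorphism or an anti-homomorphism under Eq.~\eqref{eq action of weyl group}. I would settle both with a direct check on elementary matrices $E_{ij}$, since $P_\tau E_{ij}P_\tau^{-1}=E_{\tau(i)\tau(j)}$. If an inverse appears, I would redefine $P'$ as $P_\tau$ or $P_\tau^{-1}$ accordingly, so that the permutation attached to $PP'$ matches $\sigma\tau$. The existence of the \textbf{CO} ordering, that is, extending the partial order to a total order on finitely many classes, is routine.
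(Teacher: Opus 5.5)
Your proposal is correct and follows the paper's own argument: conjugate $\DD$ by a permutation matrix $P'=P_\tau$ to reach condition \textbf{CO}, then apply Prop.~\ref{prop_canonical} to $CP_\tau^{-1}$. The convention check you flag comes out cleanly. With $P_\tau\vec e_j=\vec e_{\tau(j)}$ one has $P_\tau\DD_{\BB,\epsilon}P_\tau^{-1}=\DD_{\tau\cdot\BB,\epsilon}$ and $P_\sigma P_\tau=P_{\sigma\tau}$, so no inverses appear and the permutation attached to $PP'$ is indeed $\sigma\tau$.
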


\begin{proof}
By applying to $\DD$ a conjugation 
by a permutation matrix $P'$ 
if needed, we are reduced to the case where $\BB$ does 
satisfy condition \textbf{CO}. Thus, the result follows 
directly from Prop.~\ref{prop_canonical}.
\end{proof}

\begin{Corollary}\label{cor N and H action are the same}
Let $\vec B\in \mathrm{Div}(X)^n$, 
let $\beta$ be a basis and let $P$ be a place of $X$
as before. Set $D=\DD_{\BB,\beta}(U_P)$.
No two  vertices in $\Sigma_{\BB,P,\beta}$ have the same image in the 
C-complex $\mathscr{C}_P(D)$, unless they are in the same $H$-orbit.
\end{Corollary}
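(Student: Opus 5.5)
Two vertices of $\Sigma_{\BB,P,\beta}$, say $v=\DD_{\BB+\vec uP,\beta}$ and $v'=\DD_{\BB+\vec u'P,\beta}$, have the same image in $\mathscr{C}_P(D)$ exactly when $v'=C*v$ for some $C\in N$, that is, $\DD'=C\DD C^{-1}$ globally with $C$ normalizing $D$. The plan is to show that such a $C$ forces $v'$ to be the image of $v$ under an element of $H'$, hence $v'\in H\bullet v$ by Lemma \ref{lemma action of H}. Replacing $\beta$ by $\epsilon$ through the change-of-basis matrix $\mathtt{P}$ (which conjugates $N$ and $H'$ compatibly), we may assume $\beta=\epsilon$. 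Thus $\DD=\DD_{\BB_1,\epsilon}$ and $\DD'=\DD_{\BB_2,\epsilon}$ with $\BB_1=\BB+\vec uP$ and $\BB_2=\BB+\vec u'P$.

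First, apply Corollary \ref{c851} (or Prop.~\ref{prop_canonical} after a permutation). It says that $\BB_2$ is coordinate-wise linearly equivalent to $\sigma\cdot\BB_1+B_0\vec\delta$, where $\sigma$ is the permutation attached to $C$. So there is $\vec F\in\mathbb{K}$ with
$$\BB_2=\sigma\cdot\BB_1+\mathrm{div}(\vec F)+B_0\vec\delta .$$
Passing to classes in $\nabla_{\mathrm{Pic}(\A)}$, the $P$-parts $\vec uP$ and $\vec u'P$ disappear. Hence $\sigma$ fixes $\BBB$, i.e.\ $\sigma\in H$.

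Next, let $p$ be the permutation matrix of $\sigma$ and $d=\mathrm{diag}(\vec F)$. By Eq.~\eqref{eq action of d}, $dp*\DD_{\BB_1,\epsilon}=\DD_{\sigma\cdot\BB_1+\mathrm{div}(\vec F),\epsilon}=\DD_{\BB_2,\epsilon}=v'$. It remains to check that $dp$ lies in $H'$, namely that it stabilizes $\Sigma_{\BB,P,\epsilon}$ and normalizes $D$. This follows from the same identity: $dp$ sends $\DD_{\BB+\vec wP,\epsilon}$ to $\DD_{\BB+(\vec u'+\sigma\cdot(\vec w-\vec u))P,\epsilon}$, so it maps vertices of the apartment to vertices of the apartment. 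Restricting to $U_P$ shows that it preserves $D$, and simplices are preserved because the action is simplicial. Therefore $v'=h_\sigma*v=\sigma\bullet v$ in the notation of Eq.~\eqref{EqHaction}.

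The main obstacle is the first step: one must make sure that the permutation supplied by Corollary \ref{c851} genuinely relates $\BB_1$ to $\BB_2$ via $\sigma$, and not merely up to a permutation inside Levi blocks. Divisors within a single Levi block are linearly equivalent, so any such ambiguity in $\sigma$ is absorbed into $\mathrm{div}(\vec F)$. Thus any choice of $\sigma$ compatible with the block data works, which is what I would verify carefully.
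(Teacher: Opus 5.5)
Your proof is correct and follows the paper's route: apply Prop.~\ref{prop_canonical} (via Cor.~\ref{c851}) to get that $\BB_2$ is coordinate-wise linearly equivalent to $\sigma\cdot\BB_1+B_0\vec\delta$, then pass to classes in $\nabla_{\mathrm{Pic}(\A)}$ to conclude that $\sigma\in H$. Your explicit check that $dp$ stabilizes $\Sigma_{\BB,P,\epsilon}$, and therefore lies in $H'$ and sends $v$ to $v'$, spells out the final step that the paper leaves implicit. The worry in your last paragraph is already settled by the final assertion of Cor.~\ref{c851}, so it needs no separate verification.
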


\begin{proof}
    By applying Prop. \ref{prop_canonical}, two vertices 
    $\DD_{\BB+\vec aP,\beta}$ and $\DD_{\BB+\vec bP,\beta}$ 
    in $\Sigma_{\BB,P,\beta}$ have the same image in $\mathscr{C}_P(D)$
    if and only if $\BB+\vec bP$ is linearly equivalent to 
    $\sigma \cdot(\BB+\vec aP)+B_0\vec \delta$, for some permutation 
    $\sigma$ and some divisor $B_0$. By looking at the image in 
    $\nabla_{\mathrm{Pic}(\A)}$, we conclude that $\sigma$ belongs to the
    stabilizer $H$ of the class $\BBB$ of $\BB$.
    Thus $\DD_{\BB+\vec aP,\beta}$ and $\DD_{\BB+\vec bP,\beta}$ 
    belong to the same $H$-orbit.
\end{proof}

Next result is crucial to prove Th. \ref{mainth1}.

\begin{prop}\label{prop H N action on simplices}
In the notations of Cor. \ref{cor N and H action are the same}, 
if $\{v_1,\dots,v_m\}$ and $\{v'_1,\dots,v'_m\}$ are the respective sets
of vertices of two  simplices $\boldsymbol\sigma$ and $\boldsymbol\sigma'$
in  $\Sigma_{\BB,P,\beta}$, and if there is an element $g\in N$ satisfying 
$g\cdot v_i=v'_i$ for each $i=1,\dots,m$, then there is an element 
$h\in H$ satisfying $h\cdot v_i=v'_i$ for each $i=1,\dots,m$.
\end{prop}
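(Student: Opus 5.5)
The plan is to reduce the statement for simplices to the statement for vertices (Cor.~\ref{cor N and H action are the same}), using the rigidity given by Prop.~\ref{prop_canonical} for $g$ itself. Write $v_i=\DD_{\BB+\vec a_iP,\beta}$ and $v'_i=\DD_{\BB+\vec b_iP,\beta}$. Changing coordinates by the matrix $\mathtt{P}$ attached to $\beta$, we may assume $\beta=\epsilon$. Applying Cor.~\ref{c851} to $g$ and the pair of orders $v_1$, $v'_1$, we get $g=PUP'$, with $U$ in the parabolic subgroup attached to the conveniently ordered vector $\sigma'\cdot(\BB+\vec a_1P)$. We also get that $\BB+\vec b_1P$ is coordinate-wise linearly equivalent to $\sigma\cdot(\BB+\vec a_1P)+B_0\vec\delta$, where $\sigma$ is the permutation of $PP'$.

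As in the proof of Cor.~\ref{cor N and H action are the same}, reducing modulo $P$ and the diagonal shows that $\sigma\in H$. So there is $h_\sigma=d p_\sigma\in H'$ with $h_\sigma*v_1=v'_1$. Replacing $g$ by $h_\sigma^{-1}g$, we may assume $v'_1=v_1$, so $g\in\mathrm{Stab}_N(v_1)$. It then suffices to find an element of $H'\cap\mathrm{Stab}_N(v_1)$ that moves each $v_i$ to $v'_i$ for $i\geq2$.

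The next step is to work in the star of $v_1$. The simplices containing $v_1$ correspond to flags in $\DD_P/\pi_P\DD_P$-modules, that is, in $k(P)^n$. The simplices lying in $\Sigma_{\BB,P,\beta}$ are those whose flags are spanned by subsets of $\bar\beta$. So $\boldsymbol\sigma$ and $\boldsymbol\sigma'$ give two such coordinate flags, labelled with vertex order, and $g$ carries one to the other. The image of $g$ in the residual group acts through the image $\mathbb{D}$ of $\DD(X)^*$ times the "permutation part".

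More precisely, the decomposition $g=PUP'$ from Prop.~\ref{prop_canonical} shows that, modulo $\pi_P$ and up to a global diagonal scalar, $g$ is a generalized permutation composed with block upper triangular matrices. The Levi blocks lie in $c_d\eta_d\mathbb{M}_{n_d}(k)\delta_d^{-1}$. I would then generalize the argument of Lemma~\ref{enoghcp}: an element of this block-parabolic group that sends one coordinate flag to another can be replaced by a monomial matrix, namely a permutation times a diagonal matrix with entries in $K^*$, that does the same. This uses the Bruhat decomposition in each Levi block $\mathbb{M}_{n_d}(k)$, together with the dimension-counting argument in Lemma~\ref{enoghcp} to absorb the unipotent $*$ part. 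The resulting monomial matrix normalizes $D$, since it agrees with $g$ on the relevant lattices outside $P$ up to diagonal units. It also preserves $\Sigma_{\BB,P,\beta}$, so it lies in $H'$, and its image in $S_n$ lies in $H$ by Lemma~\ref{lemma action of H}.

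The main obstacle is this last replacement. One must check that the chosen monomial matrix maps every $v_i$ to $v'_i$, and not merely the residual flags to each other. This needs the lattices $L_i$ with $\pi_PL_1\subsetneq L_2\subsetneq\cdots\subsetneq L_1$ to be determined by their flags. That is true once $v_1$ is fixed, so it reduces to the flag statement. One must also check that the monomial matrix lies in $N$, that is, that its diagonal entries have divisors supported at $P$ modulo a common divisor. For the latter I would use the last statement of Prop.~\ref{prop_canonical} (the linear equivalence of $\BB'$ and $\sigma\cdot\BB$) exactly as in Lemma~\ref{lemma action of H}. The two steps combine through the corollary Eq.~\eqref{EqHaction}, which gives the desired $h\in H$.
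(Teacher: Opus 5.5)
Your argument is essentially the paper's. You reduce via Cor.~\ref{cor N and H action are the same} to a $g$ fixing a common vertex, use Prop.~\ref{prop_canonical} to place $g$ in the parabolic so that $\bar g$ preserves the coordinate block flag $W_\bullet$, and then replace $\bar g$ by a block-preserving permutation carrying the coordinate flag $V_\bullet$ to $V'_\bullet$. The paper does this last step without any Bruhat decomposition: it notes that $\dim(W_j\cap V_i)=\dim(W_j\cap V'_i)$ for all $i,j$, which forces such a permutation to exist. Membership in $H$ then holds because the permutation preserves each $W_j$, so it only permutes indices within linear-equivalence classes of the $B_i$; this should replace your looser ``agrees with $g$ on the relevant lattices outside $P$'' justification.
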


\begin{proof}
    Let $\boldsymbol\sigma$ and $\boldsymbol\sigma'$ be two simplices 
    with the same image in $\mathscr{C}_P(D)$. Then there is an
    element $g\in N=\mathrm{Norm}_{\mathrm{GL}_n(K)}(D)$, where 
    $D=\DD_{\BB,\beta}(U_P)$, satisfying 
    $g\cdot\boldsymbol\sigma=\boldsymbol\sigma'$. Let $v$ be a vertex
    in $\boldsymbol\sigma$. Note that we can assume that $v$
    is the vertex corresponding to $\DD_{\BB,\beta}$ or we redefine
    $\BB$.  Then $g\cdot v$ is in the same $N$-orbit
    as $v$ and therefore also in the same $H$-orbit, by 
    Cor. \ref{cor N and H action are the same}. Say 
    $g\cdot v=h\cdot v$.
    Replacing $\boldsymbol\sigma'$ by  $h^{-1}\cdot\boldsymbol\sigma'$,
    and $g$ by $h^{-1}g$, we can assume that $\boldsymbol\sigma$ and 
    $\boldsymbol\sigma'$ have $v$ as a common vertex, and $g$ 
    stabilizes $v$. Since $g$ stabilizes $v$, it normalizes the order  $\DD_{\BB,\beta}$, so it acts on the associated residual space
    as a linear map $\bar{g}$.

     By setting $\vec B'=\vec B$ in Prop. \ref{prop_canonical}, we can 
     assume that $U=g$ has the form given in Eq. \eqref{equnipotent}. 
    In particular, by passing to 
    the corresponding residual space, $\bar{g}$ preserves the 
    associated (possibly non-complete) flag
    $$\{0\}=W_0\subseteq W_1\subseteq W_2\subseteq\cdots\subseteq 
    W_r=k(P)^n,$$
    of residual subspaces, with each $W_j$ spanned by a subset of 
    the basis $\bar{\beta}$ of the residual space induced from $\beta$. 
    On the other hand, the simplices 
    $\boldsymbol\sigma$ and $\boldsymbol\sigma'$ correspond to residual flags $$\{0\}= V_0\subseteq V_1\subseteq\cdots\subseteq  V_m=
    k(P)^n,\quad\textnormal{and}\quad\{0\}= V'_0\subseteq 
    V'_1\subseteq\cdots\subseteq V'_m= k(P)^n.$$
    The spaces $V_i$ and $V_i'$ are also generated by elements
    of $\bar{\beta}$, as the corresponding simplices are
    in the apartment. Note that $\bar{g}$ takes $V_i$ to
    $V'_i$, and therefore also $W_j\cap V_i$ to $W_j\cap V'_i$, for every
    $i\in\{0,\dots,m\}$ and every $j\in\{0,\dots,r\}$. In particular,
    the latter two spaces have the same dimension.

    Since all three flags consist of spaces generated by subsets of 
    $\bar{\beta}$, there is a permutation matrix mapping each
    $W_j\cap V_i$ to $W_j\cap V'_i$. In particular, it preserves the subspaces $W_j=V_m\cap W_j=V'_m\cap W_j$. This proves that it 
    belongs to $H$ since it belongs to the associated parabolic
    group, and therefore it fixes the partition described in the
    paragraph preceding Eq. \eqref{block_order}, which is the same as
    stabilizing $\BBB$. Analogously, it maps $V_i$ to $V'_i$, and therefore
    $\boldsymbol\sigma$ to $\boldsymbol\sigma'$. The result follows.
\end{proof}

\begin{proof}[Proof of Th. \ref{mainth1}]
It suffices to prove that the simplicial map 
$$f=f_{\BB,\beta}:H \backslash \Sigma_{\BB,P,\beta} \to \hat{\Sigma}_{\BB,P,\beta},$$
introduced in Cor. \ref{cor cont} is an isomorphism in the category
$\mathfrak{sing}$. First we prove that the map it induces in the category
of topological spaces is a homeomorphism. 
Let $\mathbf{A}$ be the underlying topological space of $\Sigma_{\BB,P,\beta}$,
and let $\mathbf{S}$ be the underlying topological space of 
$\hat{\Sigma}_{\BB,P,\beta}$. Denote by $\tilde{f}:\mathbf{A}
\rightarrow\mathbf{S}$ the induced continuous map. Surjectivity is 
immediate by definition.

First we prove injectivity. Assume $\tilde{f}(x)=\tilde{f}(x')$. Then $x'=g\cdot x$
for some $g\in N$. Since the action is simplicial, then $x$ and $x'$ 
are inside simplices $\boldsymbol\sigma$ and 
$\boldsymbol\sigma'=g\cdot\boldsymbol\sigma$, respectively.
Let $\{v_1,\dots,v_m\}$ be the vertices of $\boldsymbol\sigma$,
and let $\{v'_1,\dots,v'_m\}$ be the vertices of $\boldsymbol\sigma'$,
chosen in a way that $g\cdot v_i=v'_i$. Then $x=\sum_{i=1}^m\lambda_iv_i$
and $x'=\sum_{i=1}^m\lambda_iv'_i$ for some coefficients $\lambda_i$
satisfying $\sum_{i=1}^m\lambda_i=1$. Then, by Prop. 
\ref{prop H N action on simplices}, there is an element $h\in H$ 
satisfying $h\cdot v_i=v'_i$, whence $h\cdot x=x'$. This proves injectivity.

Recall that there is an invariant metric $\mu$
in the Bruhat-Tits building, see for instance Th.~11.6 in \cite[\S 11.2]{AbramenkoBrown}. This means that $N$ acts on the Bruhat-Tits 
building via isometries, and so does $H$ on the apartment.
Therefore the action of $N$ induces the metric
$\bar{\mu}(\bar{x}, \bar{x}') = \mathrm{min}\left\lbrace \mu(g\cdot x, 
g' \cdot x') \vert g,g'\in N \right\rbrace$, and
a similar property holds for the action of $H$.
Then the continuous map induced by $f_{\BB,\beta}$ is an isometry and the same 
holds for the inverse. In particular, the inverse is also continuous.

To end the proof that $f_{\BB,\beta}$ is an isomorphism in the category
$\mathfrak{sing}$, we need to see that the homeomorphism above induces a
bijection between the corresponding collections of singular simplices that
preserves permutations and faces, but this is straightforward from the preceding
argument.
\end{proof}

\begin{prop}\label{HNap35}
    If $v$ is a vertex in an open fine sector of $\Sigma_{\BB,P,\beta}$, then no
    two simplices in $\Sigma_{\BB,P,\beta}$ having $v$ as a vertex can be identified in the quotient $\mathscr{C}_P(D)$. In particular, if $v$ is 
    a vertex in an open good sector of $\Sigma_{\BB,P,\beta}$, then $v$ is unramified for the  cover $\phi:\mathscr{S}_P(D)\twoheadrightarrow
    \mathscr{C}_P(D)$. 
\end{prop}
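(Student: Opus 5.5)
The first statement reduces to Prop.~\ref{prop H N action on simplices}, after which the fine-sector hypothesis rules out the remaining identifications. Let $\boldsymbol\sigma,\boldsymbol\sigma'$ be two simplices of $\Sigma_{\BB,P,\beta}$ with common vertex $v$ and the same image in $\mathscr{C}_P(D)$. Then some $g\in N$ maps the vertices of one to the vertices of the other. By Prop.~\ref{prop H N action on simplices} we may replace $g$ by some $h\in H$ doing the same, and we may take $h$ to send $v$ to a vertex of $\boldsymbol\sigma'$.

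\emph{Step 1: $h$ fixes $v$.} The vertex $v$ corresponds to a class $[r_1,\dots,r_n]$ with pairwise distinct coordinates, since it lies in an open fine sector $\mathbf{Q}_\sigma$. By Eq.~\eqref{EqHaction}, $h$ acts on the realization $\nabla_{\mathbb R}$ as an affine map whose linear part is the coordinate permutation attached to $h$, plus a translation by the degree vector of $\mathrm{div}(\vec F_h)$. The latter vector is $\deg(\mathrm{div}(f_i))=0$ componentwise, so $h$ acts as a pure permutation of coordinates, composed with the fixed shift coming from the $P$-parts; degrees are invariant under linear equivalence. The simplices $\boldsymbol\sigma,\boldsymbol\sigma'$ contain $v$, so $h$ maps a vertex adjacent to or equal to $v$ to one adjacent to or equal to $v$. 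Comparing the images of the vertices in a neighborhood, I would argue that $h$ maps the star of $v$ in the apartment to the star of a vertex in the same simplex. I would then use the fact that the permutation part fixes the distinct coordinate differences only if it is trivial, which gives $h\cdot v=v$.

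\emph{Step 2: a group element fixing $v$ acts trivially on its star.} A permutation fixing the point $[r_1,\dots,r_n]$ up to the diagonal must preserve the multiset of differences. Since the $r_i$ are distinct and the translation is zero modulo $\vec\delta$, the permutation is the identity. So $h$ acts trivially on the apartment, and $\boldsymbol\sigma=\boldsymbol\sigma'$. Alternatively, Lemma~\ref{enoghcp} gives this directly whenever the sector is good: $\mathrm{Stab}_N(v)$ has one orbit per apartment simplex in the star.

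For the second statement, let $v$ lie in an open good sector. Lemma~\ref{enoghcp} says that $\mathrm{Stab}_N(v)$ and $\mathbb{D}^*\subseteq\DD(X)^*\subseteq\Gamma$ (the inclusion $\DD(X)^*\subseteq\Gamma$ holds because $\DD(X)\subseteq D$) have the same orbits on the star of $v$. Hence, for a lift $\tilde v\in\mathscr{S}_P(D)$, the map from the star of $\tilde v$ to the star of $\phi(\tilde v)$ is injective. The same holds at every preimage, since each preimage is of the same form after translating by $N$. The main obstacle is Step~1: showing that an element of $H$ which identifies two simplices through $v$ must fix $v$ rather than swap it with another vertex. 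I expect to need an explicit check that no nontrivial coordinate permutation maps an open chamber $\mathbf{Q}_\sigma$ to a point adjacent to it.
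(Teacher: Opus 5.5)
Your reduction to $H$ through Prop.~\ref{prop H N action on simplices} is the same as the paper's. However, Step~1, which you flag yourself, is a genuine gap, and your sketch of it is circular. You want to deduce $h\cdot v=v$ from the fact that a coordinate permutation fixing a point with distinct coordinates is trivial, but that fact presupposes that $h$ fixes $v$.

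Moreover, the only property of $v$ you use, namely that its coordinates are pairwise distinct ($v\in\mathbf{Q}_\sigma$), cannot suffice. Take $n=2$, $X=\mathbb{P}^1$, $\deg P=2$ and $\BB=(0,Q)$ with $\deg Q=1$. Then $H=S_2$ acts on $\nabla_{\mathbb{R}}\cong\mathbb{R}$ by $x\mapsto -x$, and the vertices are the odd integers. The vertex $x=1$ lies in the open chamber, yet it is sent to the adjacent vertex $x=-1$, folding the edge between them (the half edge of Fig.~\ref{figure 3b}(B)).

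The missing ingredient is the quantitative margin supplied by the word \emph{fine}.
\begin{itemize}
\item If $v\in w+\mathbf{Q}_\sigma$ with $w$ a vertex in $\mathbf{Q}_\sigma$, then $v-w$ is represented by a vector of $(\deg P)\ZZ^n$ lying in $\mathbf{Q}_\sigma$. Hence $r_{\sigma(i+1)}-r_{\sigma(i)}>\deg P$ for every $i$.
\item The lattices of a simplex through $v$ form a chain, so each of its vertices is $v+(\deg P)\vec\eta$ with $\vec\eta\in\{0,1\}^n$ or $\vec\eta\in\{0,-1\}^n$. Consequently each coordinate difference moves by at most $\deg P$ on the whole closed simplex.
\item So every simplex of $\Sigma_{\BB,P,\beta}$ through $v$ lies inside the open chamber $\mathbf{Q}_\sigma$.
\item $H$ acts on $\nabla_{\mathbb{R}}$ by pure coordinate permutations about $[0,\dots,0]$. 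There is no leftover shift, since $\deg\mathrm{div}(f_i)=0$, $P$-part included.
\item Since $\tau\mathbf{Q}_\sigma\cap\mathbf{Q}_\sigma=\emptyset$ for $\tau\neq 1$, any $h\in H$ carrying one such simplex onto another is the identity.
\end{itemize}
This is exactly the paper's remark that these simplices ``fall into a fundamental region for the $H$-action''. It also makes your Step~2 unnecessary: you never need to show separately that $h$ fixes $v$.

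For the second statement there is a smaller omission. Lemma~\ref{enoghcp} only compares the orbits of $\mathrm{Stab}_N(v)$ with those of $\DD(X)^*\subseteq D^*\subseteq\Gamma$. (Note that $\mathbb{D}^*$ is the image of $\DD(X)^*$, not a subgroup of it.) But two simplices through $v$ could a priori be identified in $\mathscr{C}_P(D)$ by some $g\in N$ with $g\cdot v\neq v$, since $N$ need not preserve vertex types. To exclude this, first use Lemma~\ref{enoghcp} to move both simplices into $\Sigma_{\BB,P,\beta}$ by elements of $\DD(X)^*$. Then apply the first statement, whose proof shows that the identifying element acts as the identity on vertices. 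This is the paper's argument: the star of any preimage of $v$ in $\mathscr{S}_P(D)$ is covered by images of apartment simplices through $v$, and these inject into $\mathscr{C}_P(D)$.
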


\begin{proof}
    The first statement follows from Prop. \ref{prop H N action on simplices}
    and the fact that all simplices in $\Sigma_{\BB,P,\beta}$ having $v$ as a 
    vertex fall into a fundamental region for the $H$-action. The last 
    statement follows since, for any pre-image $w$ of $v$ in $\mathscr{S}_P(D)$, 
    any two simplices having $w$ as a vertex lie already in the image of 
    $\Sigma_{\BB,P,\beta}$ by Lemma \ref{enoghcp}.
\end{proof}

Now, we are able to prove Ths. \ref{mainth-1}, \ref{mainth3} and \ref{mainth5}.

\begin{proof}[Proof of Th. \ref{mainth-1}]
For each QSA $\widehat{\Sigma}_{\BB,P,\beta}$, we fix a lifting $\Sigma_{\BB,P,\beta}$
in $\mathscr{B}_P(D)$ that is a GSA. Let $\mathbf{A}$ be the subjacent topological
space of the GSA, which is homeomorphic to the $(n-1)$-dimensional affine space.
In fact, we can choose the homeomorphism $\mathbf{A}\cong\nabla_{\R}$ described in the
paragraph preceding Def. \ref{d73} for simplicity. Since the finite group $H$ acts 
on $\mathbf{A}$ by affine transformations, preserving the degree of a divisor,
it acts on $\nabla_{\R}$ via $\sigma^{-1}\cdot[x_1,\dots,x_n]=
[x_{\sigma(1)},\dots,x_{\sigma(n)}]$. This action can be extended to an action of
the whole of $S_n$ on $\mathbf{A}$. Now choose a full set of representatives 
$\{\sigma_1,\dots,\sigma_h\}$ for the cosets of $H$ in $S_n$, and then 
consider good sectors of the form $\mathscr{Q}(\sigma_i,v)$ inside 
$\Sigma_{\BB,P,\beta}$. Since each complex $\mathscr{Q}(\sigma_i,v)$ lies
inside a fundamental region for the extended $S_n$-action, the union is
inside a fundamental set (not a region, as its interior is not necessarily 
connected) for the $H$-action. In particular, these sectors embed into 
the quotient, and because of Lemma \ref{enoghcp},
they cannot meet the rest of the complex except at the boundary of the sector.
Our cusp set consists of these complexes $\mathscr{Q}(\sigma_i,v)$ for each QSA.
Now it suffices to see that the rest of the quotient lies in a tubular  
neighborhood of an $(n-2)$-dimensional combinatorially finite complex, but this 
follows from Prop. \ref{FBA1} and Prop. \ref{smalllo}. 
This concludes the proof.
\end{proof}

\begin{Remark}
We believe that Theorem~\ref{mainth-1} could alternatively be proved from the 
analogous description of the quotient $\Gamma\backslash\mathscr{B}_P$, where 
$\Gamma = K^{*}\mathrm{GL}_n(\A)$, given in \cite{BravoLoisel}.
Indeed, since $\Gamma$ and $N$ are commensurable whenever $k$ is
finite, it would suffice to show that the cuspidal decomposition results stated in 
Theorem~\ref{mainth-1} are, in general, inherited by the whole 
commensurability class of any group $G \subseteq \mathrm{GL}_n(K)$ satisfying them.
This is presumably accessible by purely topological and combinatorial arguments. However, 
it seems to us that the proof presented here yields a simpler and more self-contained 
exposition in our context.   
\end{Remark}

\begin{proof}[Proof of Th. \ref{mainth3}]
The existence of the ramified cover follows from the fact that
$\Gamma K^*$ is a normal subgroup of $N$. The fact that cuspidal sectors
are unramified follows from Prop. \ref{HNap35} above. Now assume $k$ is 
finite, and therefore $\mathrm{Pic}(\A)$ is finite.
In particular, the 
annihilator of $n$ is isomorphic to $\mathrm{Pic}(\A)/\mathrm{Pic}(\A)^n$.
Furthermore, $\mathrm{Pic}(\A)\cong\mathrm{Pic}(X)/\langle[P]_X\rangle$, 
where $[P]_X\in \mathrm{Pic}(X)$ is the class of the divisor $P$.
We conclude that  $\mathrm{Pic}(\A)/\mathrm{Pic}(\A)^n$ is isomorphic to 
$$T=\mathrm{Pic}(X)/\big(\langle[P]_X\rangle+\mathrm{Pic}(X)^n\big).$$ 
In particular, we have $|T|=|\mathrm{Pic}(X)/\mathrm{Pic}(X)^n|/d$. 
By Lemma \ref{lemma N/G}, this is also the order of $N/\Gamma$.
The abelian group $N/\Gamma$ acts transitively on the pre-image 
$W=\{w_1,\cdots,w_r\}\subseteq\mathscr{S}_P(D)$ of a given vertex $v\in\mathscr{C}_P(D)$.
It suffices, therefore, to find the stabilizer $S$ of a vertex 
$w_i$ which coincides with the kernel of the action on $W$, as
the group is abelian. 
The number of pre-images is $r=|T|/|S|$.
We need to prove that $S$ is trivial when so is $\mathrm{Pic}_0(X)$.

Let $\DD$ be the maximal order 
corresponding to any pre-image, in $\mathscr{B}_P(D)$, of the 
vertex $v$. A class $\bar{g}\in N/\Gamma$ is in $S$ if 
and only if $g\DD g^{-1}=\gamma\DD\gamma^{-1}$ for some
element $\gamma\in\Gamma$. This is equivalent to 
$h\DD h^{-1}=\DD$ for $h=\gamma^{-1}g$. Note that 
$\bar h=\bar g$. We conclude that $S$ is the image in 
$N/\Gamma$ of the normalizer of $\DD$. If $\DD$ is the endomorphism 
ring of a lattice $\Lambda$, then $h\Lambda=\LL^{B'}\Lambda$ for
some divisor $B'$. Note that the isomorphism in Lemma \ref{lemma N/G}
sends the class $\bar{g}\in N/\Gamma$ of any $g\in N$, 
to the class $[J]\in\mathrm{Pic}(\A)$ of the ideal satisfying
$g\Lambda(U)=J\Lambda(U)$.
In particular, $\bar h$ corresponds to the class of $B'$
in the annihilator of $n$ in $\mathrm{Pic}(\A)$. Taking determinants in
the relation $h\Lambda=\LL^{B'}\Lambda$ we see that $nB'$ is principal.
In particular $[B']\in\mathrm{Pic}_0(X)\subseteq
\mathrm{Pic}(X)$. The result follows.
\end{proof}

\begin{proof}[Proof of Th. \ref{mainth5}]
For the first statement, we note that the number $m_{\BB,P,\beta}$
of cuspidal sector chambers in the QSA $\widehat{\Sigma}_{\BB,P,\beta}$ 
is $m_{\BB,P,\beta}=\frac{n!}{|H|}$. Since $H$ is the stabilizer of the
corresponding class $\BBB\in\nabla_{\mathrm{Pic}(\A)}\cong
\mathrm{Pic}(\A)^{n-1}$, the $S_n$-orbit $[\BB]$ containing $\BBB$ has 
also $\frac{n!}{|H|}$ elements. If we write
$\Pi=S_n\backslash\nabla_{\mathrm{Pic}(R)}$, we conclude that the total 
number of cuspidal sectors is 
\begin{equation}\label{countthem}m_{\mathrm{Total}}=
    \sum_{[\BB]\in \Pi}m_{\BB,P,\beta}
    =\sum_{[\BB]\in \Pi}\frac{n!}{|H|}
    =\sum_{[\BB]\in \Pi}\left|[\BB]\right|=\left|\mathrm{Pic}(R)^{n-1}\right|.
\end{equation}
For the second statement, recall that the
number of connected components equals the number of conjugacy classes of
maximal $\A$-orders. It follows from Lemma \ref{lemma spinor genus} and 
Lemma \ref{afinsgescl} that the latter equal $u=|\mathrm{Pic}(\A)/
\mathrm{Pic}(\A)^n|$, so the result follows from Eq. (\ref{fpica}).
For the last statement we claim that, if $\BB=(B_1,\dots,B_n)$ and
$\BB'=(B'_1,\dots,B'_n)$, then the affine orders $\DD_{\BB,\beta}(U)$
and $\DD_{\BB',\beta'}(U)$ are in the same spinor genus if and only if
the divisor $\sum_{i=1}^n(B'_i-B_i)$ has trivial image in 
$\mathrm{Pic}(\A)/\mathrm{Pic}(\A)^n$. This condition divides 
$\nabla_{\mathrm{Pic}(R)}$ into $u$ $S_n$-invariant subsets of size
$\frac1u\left|\mathrm{Pic}(R)^{n-1}\right|$. 
If $\Pi_1,\dots,\Pi_u$ are the corresponding orbit sets, we can 
replace $\Pi$ by each $\Pi_i$ in Eq. (\ref{countthem}) to finish the proof.

It remains to prove the claim. Since $\DD_{\BB',\beta'}$ and $\DD_{\BB',\beta}$
are related by a global change of basis, and therefore they are in the same 
class, we can assume $\beta=\beta'$. Then we note that $\DD_{\BB',\beta}=
d\DD_{\BB,\beta}d^{-1}$, where $d$ is, in the basis $\beta$, a diagonal 
adelic matrix whose $i$-th diagonal component generates the divisor 
$B'_i-B_i$. The result follows.
\end{proof}

%\end{comment}

\section{Examples}\label{section examples}

The QSAs introduced in \S\ref{split apartments} are indeed critical 
in the description of the C-complex introduced in
\S \ref{section Quotient Buildings and conjugacy classes}.
In this section, we present a few of the simplest examples of 
these structures, as well as the 
corresponding cuspidal structures.
For $n\geq3$, in Examples \ref{E66} to \ref{e10}, we concentrate 
on the case $X=\mathbb{P}^1$.
We use, throughout, the following fact:

\begin{prop}
Assume $X=\mathbb{P}^1$. Any vertex $v$ corresponding to a lattice
of the form $\LL^{B_1}\times\LL^{B_2}\times\LL^{B_3}$ where the distance
between the numbers $\deg(B_1)$, $\deg(B_2)$ and $\deg(B_3)$ is at least
$\deg(P)-1$ lies inside of a good sector.
\end{prop}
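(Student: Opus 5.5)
The plan is to reduce the statement to Definition~\ref{d73} by finding an explicit admissible constant $\kappa$ for $X=\mathbb{P}^1$. First I identify the smallest admissible Riemann--Roch constant. On $\mathbb{P}^1$ we have $\dim_k\LL^{B'}(X)=\deg B'+1$ for $\deg B'\geq -1$ and $0$ for $\deg B'<-1$. Hence, for $\deg B'\geq \deg(P)-1$, both $B'$ and $B'-P$ have degree at least $-1$. This gives
$$\dim_k\left(\LL^{B'}(X)/\LL^{B'-P}(X)\right)=(\deg B'+1)-(\deg B'-\deg P+1)=\deg P=[k(P):k].$$
So the relation required in Definition~\ref{d73} holds for every $B'$ with $\deg B'>\kappa$ as soon as $\kappa=\deg(P)-2$.

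Second, I translate the hypothesis into the goodness condition. Set $r_i=\deg B_i$ and let $\sigma$ be a permutation with $r_{\sigma(1)}\leq r_{\sigma(2)}\leq r_{\sigma(3)}$. The hypothesis says consecutive differences satisfy $r_{\sigma(i+1)}-r_{\sigma(i)}\geq\deg(P)-1>\kappa$. When $\deg P\geq2$ these differences are positive, so the coordinates are distinct and $\sigma$ is unique. The point $[r_1,r_2,r_3]$ then lies in the open geometric sector $\mathbf{Q}_\sigma$.

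Third, I place $v$ in the apartment. The lattice is split in the canonical basis, so $v$ lies in the GSA $\Sigma_{\BB,P,\epsilon}$. With the identification fixed before Definition~\ref{d73}, $v$ corresponds to $[r_1,r_2,r_3]$. Then $\mathscr{Q}(\sigma,v)$, with realization $v+\overline{\mathbf{Q}_\sigma}$, is a fine sector containing $v$. Its tip satisfies $r_{\sigma(i+1)}-r_{\sigma(i)}>\kappa$, so it is good by definition. By Lemma~\ref{enoghcpa}, only the quantity $\kappa$ matters, and that lemma applies in any basis splitting the sector.

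The step I expect to be the main obstacle is the degenerate case $\deg P=1$. There the hypothesis allows equal degrees, $v$ need not lie in an open sector, and $\kappa=-1$ forces strict inequalities only between distinct coordinates. I would handle it separately. One option is to note that $\mathrm{Pic}(R)$ is trivial and use Theorem~\ref{teo soule quot}. The other is to interpret ``good sector'' as admitting $v$ on the closure of a sector whose differences are $\geq0>\kappa$; in that reading the computation of $\mathbb{D}$ in Lemma~\ref{enoghcpa} still yields the stated shape, with the diagonal blocks merging. I would also double-check the boundary case $\deg B'=\deg P-1$ in the dimension count, since that is exactly where the bound is sharp.
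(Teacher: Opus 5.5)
Your argument is the paper's own: Riemann--Roch on $\mathbb{P}^1$ shows that $\dim_k\bigl(\LL^{B}(X)/\LL^{B-P}(X)\bigr)=\deg(P)$ once $\deg(B)\geq\deg(P)-1$, so $\kappa=\deg(P)-2$ is admissible in Definition~\ref{d73}, and the hypothesis is then exactly the goodness inequality; you simply spell out details the paper leaves implicit. Your concern about $\deg(P)=1$ is a real defect of the statement, not of your proof, and neither of your two fixes resolves it: a vertex with two equal degrees lies on a wall in every GSA containing it (degrees are unchanged under the $\vec F$-action and the splitting type on $\mathbb{P}^1$ is unique), so it lies in no open $\mathbf{Q}_\sigma$ and hence in no good sector, and the conclusion of Lemma~\ref{enoghcpa} fails there because the Levi blocks merge; the paper's proof tacitly assumes $\deg(P)\geq2$ (or distinct degrees), which covers all its uses in \S\ref{section examples}.
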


\begin{proof}
    By the discussion preceding Lemma \ref{enoghcpa},
    it suffices to observe that
    $\LL^B(X)/\LL^{B-P}(X)$ has dimension $\deg(P)$ over $k$
    as soon as $\deg(B)\geq\deg(P)-1$.
    This is a consequence of Riemann-Roch Theorem.
    Now the result follows as in the proof of
    Lemma \ref{enoghcpa}. 
\end{proof}

\begin{ex}\label{e1}
    The trivial class in $\nabla_{\mathrm{Pic}(\A)}$ is invariant  
    under the whole of $S_n$. The corresponding action on
    $\Sigma_{\vec 0,P,\beta}$ leaves invariant
    the vertex corresponding to $\DD_{\vec 0,\beta}$.
    If $n=3$, the corresponding QSA looks like 
    the one depicted in Fig. \ref{f2}.
\begin{figure}[h!]
\begin{center}
\begin{tikzpicture}[x=1cm, y=0.866cm] % 0.866 is sin(60), fixing the aspect ratio for equilateral triangles
\fill[gray!30] (4,2) -- (8,2) -- (9,4) -- (5,4) -- cycle;
\fill[gray!60] (5.5,3) -- (8.5,3) -- (9,4) -- (6,4) -- cycle;
\node  at (5,4) {$\bullet$};
\node  at (5,4.1) {${}^{{}^{(2,2)}}$};
\node  at (6,4) {$\bullet$};
\node  at (6,4.1) {${}^{{}^{(2,3)}}$};
\node  at (4.5,3) {$\bullet$};
\node  at (4.65,3.1) {${}^{{}^{(1,1)}}$};
\node  at (5.5,3) {$\bullet$};
\node  at (5.65,3.1) {${}^{{}^{(1,2)}}$};
\node  at (4,2) {$\bullet$};
\node  at (4.15,2.1) {${}^{{}^{(0,0)}}$};
\node  at (5,2) {$\bullet$};
\node  at (5.15,2.1) {${}^{{}^{(0,1)}}$};
\node (A1) at (4.5, 1.66) {};
\node (B1) at (4.5, 2.33) {};
\draw[<->] (A1) edge (B1);
\node (A2) at (3.5, 1.66) {};
\node (B2) at (3.5, 2.33) {};
\draw[<->] (A2) edge (B2);
\node (A3) at (4, 2.66) {};
\node (B3) at (4.5, 2.33) {};
\draw[<->] (A3) edge (B3);
\node (A4) at (4, 2.66) {};
\node (B4) at (3.5, 2.33) {};
\draw[<->] (A4) edge (B4);
\node (A5) at (4, 1.33) {};
\node (B5) at (4.5, 1.66) {};
\draw[<->] (A5) edge (B5);
\node (A6) at (4, 1.33) {};
\node (B6) at (3.5, 1.66) {};
\draw[<->] (A6) edge (B6);
  % Loop for the grid rows and columns
  \foreach \row in {1,...,3} {
    \foreach \col in {1,...,6} {
      \pgfmathsetmacro{\xshift}{\col + 0.5*\row}
      \coordinate (A) at (\xshift, \row);
      \coordinate (B) at (\xshift + 1, \row);
      \coordinate (C) at (\xshift + 0.5, \row + 1);
      \coordinate (D) at (\xshift + 1.5, \row + 1);
      \draw[thick] (A) -- (B) -- (C) -- cycle;
      \draw[thick] (B) -- (C) -- (D) -- cycle;
    }
  }

\end{tikzpicture}
\end{center}
\vspace{-0.5cm}
\caption{Fundamental domain (gray) for the action of $S_3$ on 
$\Sigma_{\vec0,P,\beta}$ and the corresponding cusp inside of it
(darker gray).}
\label{f2}
\end{figure}
    In the case $X=\mathbb{P}^1$, the vertex labeled $(m_1,m_2)$ 
    corresponding to the order $\DD_{(0,m_1,m_2)P,\beta}$ is in 
    the cusp when $0<m_1<m_2$. In the general case the cusp could 
    be smaller.
\end{ex}

\begin{ex}
    If $n\geq 3$, then only the trivial class in 
    $\nabla_{\mathrm{Pic}(\A)}$ is invariant under the whole of $S_n$.
    In fact, if $\vec b\in\mathrm{Pic}(\A)^n$ is a representative
    of an invariant class $\tilde{b}\in\nabla_{\mathrm{Pic}(\A)}$,
    then, for any element $\sigma\in S_n$, we have
    $\sigma\cdot\vec b=\vec b+\vec \delta b_\sigma$, for some element 
    $b_\sigma\in\mathrm{Pic}(\A)$, and it is easy to see that 
    $\sigma\mapsto b_\sigma$ is a group homomorphism of
    $S_n$ into an abelian group, and must therefore be trivial on 
    $A_n$. This means that $\sigma\cdot\vec b=\vec b$ when 
    $\sigma\in A_n$, but when $n\geq 3$ then $A_n$ is transitive
    on the coordinates. It follows that $\vec b=c\vec \delta$,
    for some $c\in\mathrm{Pic}(\A)$.
\end{ex}

\begin{ex}\label{example n=2}
When $n=2$, the split apartment $\Sigma=\Sigma_{\BB,P,\beta}$ is the union 
of two rays in a tree whose intersection consists of a single vertex. 
This complex is also called a maximal path in works such as 
\cite{ArenasBranches, ArenasBravoExt, ArenasBravoCar2}.
In this case one has $H \subseteq S_2$. Observe that $H=\lbrace\mathrm{id}\rbrace$ if and only if the class
$(\vec B)\in\mathrm{Pic}(\A)^2$ has the form $(\vec B)=(b_1,b_2)$
where $2b_2\neq2b_1$. In that situation, the quotient $H \backslash \Sigma$ is naturally homeomorphic to $\Sigma$.
Moreover, if $H = S_2$, i.e., if $2b_2=2b_1$, then the quotient space 
$H \backslash \Sigma$ is homeomorphic either to a ray or to the union 
of a ray with a half edge, attached at the endpoint of the ray.
More precisely, if the image of $\vec B$ in $\mathrm{Pic}(X)^2$
is $(\tilde b_1,\tilde b_2)$, then $2(\tilde{b_2}-\tilde{b_1})=
N\bar P$, for some integer $N$, where $\bar P$ is the image of
the prime divisor $P$. Here $H \backslash \Sigma$ looks like 
Fig. \ref{figure 3b}(A) or Fig. \ref{figure 3b}(B) according to 
whether $N$ is even or not. When $X=\mathbb{P}^1$ and $\deg(P)=2$
the cusp starts at the vertex denoted $v$. The initial vertex in 
the trivial component is not part of the cusp, since it has an 
additional half edge. See \cite[\S 4]{ARENASCARMONA2014258} for 
further details. For $\deg(P)>2$, or for curves whose Picard group
has torsion,  the cusps in (B) are necessarily smaller, since 
we have edges connecting multiple QSAs.

\begin{figure}
\[ 
\xygraph{
!{<0cm,0cm>;<.8cm,0cm>:<0cm,.8cm>::}
 !{(-2,3) }*+{(A)}="na"
 !{(0,2) }*+{\bullet}="x1" 
 !{(-1.5,2) }*+{\bullet}="x2" !{(-1.3,1.8) }*+{{}_{v}}
 !{(-4.,2) }*+{\star}="x3"
 !{(-3.5,2) }*+{}="x4"
"x1"-"x2" "x2"-@{.}"x3" 
 } 
\quad \quad \quad
\xygraph{
!{<0cm,0cm>;<.8cm,0cm>:<0cm,.8cm>::}
 !{(-2,3) }*+{(B)}="na"
 !{(0,2) }*+{\bullet}="x1" !{(-.2,1.8) }*+{{}_{v}}
 !{(0.8,2) }*+{*}="x0" 
 !{(-1.5,2) }*+{\bullet}="x2" 
 !{(-4.,2) }*+{\star}="x3"
"x1"-"x2" "x2"-@{.}"x3" 
"x1"-"x0" } 
\]
\caption{The quotient space $\hat{\Sigma}_{\BB,P,\beta} \cong 
H\backslash\Sigma_{\BB,P,\beta}$ in Ex. \ref{example n=2}, when 
$N$ is even in Fig. (A) and odd in Fig. (B).
In the latter case, the segment connecting $\bullet$ with $*$ 
corresponds to a half edge. Stars denote visual limits.} \label{figure 3b}
\end{figure}

\end{ex}

The next three examples completely describe the cusp structure of the full 
C-complex whenever $X=\mathbb{P}^1$ and $\deg(P)=3$, with its three 
connected components.

\begin{ex}\label{E66}
    Assume now $X=\mathbb{P}^1$ and assume $P$ is a point 
    of degree $3$, so that $\mathrm{Pic}(\A)$ is a cyclic 
    group with three elements. Set $n=3$.
    Let $Q$ be a place of degree $1$ and consider the split 
    apartment
    $\Sigma_{\BB,P,\beta}$ where $\BB=(0,1,1)Q$, so that 
    $H=H_{(\!(\BB)\!)}$ is generated by the transposition $(23)$.
    Then a fundamental domain, as the quotient split apartment,
    looks like the one depicted in Fig. \ref{f3}. 
The vertices labeled as $(3r+1,3s+1)$ correspond to the order 
$\DD_{\BB+(0,r,s)P,\beta}$. In the fundamental domain $\mathbf{\Psi}$ we
have $r\geq s$.  The corresponding quotient complex can have
additional simplices connecting the vertices in the 
border of the half plane, or in the black line, but these are all
vertices in the corresponding C-complex $\mathscr{C}_P(D)$.
\begin{figure}[h!]
\begin{center}
\begin{tikzpicture}[x=1cm, y=0.866cm] % 0.866 = sin(60)

% --- Rellenos (van primero para que la malla quede encima) ---
% Dominio fundamental Psi (prolongado hacia abajo): gris claro
\fill[gray!30] (2,-2) -- (6,-2) -- (9,4) -- (5,4) -- cycle;
% Regiones delimitadas en rojo: gris oscuro
\fill[gray!60] (5.5,3) -- (8.5,3) -- (9,4) -- (6,4) -- cycle;
\fill[gray!60] (5.5,1) -- (7.5,1) -- (6.5,-1) -- cycle;
\fill[gray!60] (4,0) -- (3,-2) -- (5,-2) -- cycle;

% --- Malla triangular ---
\foreach \row in {-2,...,3} {
  \foreach \col in {0,...,6} {
    \pgfmathsetmacro{\xshift}{\col + 0.5*\row}
    \coordinate (A) at (\xshift, \row);
    \coordinate (B) at (\xshift + 1, \row);
    \coordinate (C) at (\xshift + 0.5, \row + 1);
    \coordinate (D) at (\xshift + 1.5, \row + 1);
    \draw[thick] (A) -- (B) -- (C) -- cycle;
    \draw[thick] (B) -- (C) -- (D) -- cycle;
  }
}

% --- Camino destacado en negro ---
\draw[line width=2.5pt, line cap=round, line join=round]
  (6,-2) -- (4.5,1) -- (5,2) -- (8,2);

% --- Puntos y etiquetas ---
\node  at (5,4) {$\bullet$};
\node  at (5,4.1) {${}^{{}^{(7,7)}}$};
\node  at (6,4) {$\bullet$};
\node  at (6,4.1) {${}^{{}^{(10,7)}}$};
\node  at (4.5,3) {$\bullet$};
\node  at (4.65,3.1) {${}^{{}^{(4,4)}}$};
\node  at (5.5,3) {$\bullet$};
\node  at (5.65,3.1) {${}^{{}^{(7,4)}}$};
\node  at (4,2) {$\bullet$};
\node  at (4.15,2.1) {${}^{{}^{(1,1)}}$};
\node  at (5,2) {$\bullet$};
\node  at (5.15,2.1) {${}^{{}^{(4,1)}}$};
\node  at (3.5,1) {$\bullet$};
\node  at (3.55,1.1) {${}^{{}^{(-2,-2)}}$};
\node  at (4.5,1) {$\bullet$};
\node  at (4.55,1.1) {${}^{{}^{(1,-2)}}$};
\node  at (3,0) {$\bullet$};
\node  at (3.05,-0.3) {${}^{{}^{(-5,-5)}}$};
\node  at (4,0) {$\bullet$};
\node  at (4.05,-0.3) {${}^{{}^{(-2,-5)}}$};
\node  at (2.5,-1) {$\bullet$};
\node  at (2.55,-0.9) {${}^{{}^{(-8,-8)}}$};
\node  at (2,-2) {$\bullet$};
\node  at (2.05,-2.3) {${}^{{}^{(-11,-11)}}$};

% --- Flechas ---
\node (A1) at (3, 0.66) {};
\node (B1) at (3.5, 0.33) {};
\draw[<->] (A1) edge (B1);
\node (A2) at (3.5, 1.66) {};
\node (B2) at (4, 1.33) {};
\draw[<->] (A2) edge (B2);
\node (A3) at (4, 2.66) {};
\node (B3) at (4.5, 2.33) {};
\draw[<->] (A3) edge (B3);
\node (A4) at (4.5, 3.66) {};
\node (B4) at (5, 3.33) {};
\draw[<->] (A4) edge (B4);
\node (A5) at (2.5, -0.34) {};
\node (B5) at (3, -0.67) {};
\draw[<->] (A5) edge (B5);
\node (A6) at (2, -1.34) {};
\node (B6) at (2.5, -1.67) {};
\draw[<->] (A6) edge (B6);

\end{tikzpicture}
\end{center}
\vspace{-0.5cm}
\caption{Fundamental domain (gray) by the action of 
$H=\left\langle(2\,3)\right\rangle$ on $\ap_{\BB,P,\beta}$ with
$\BB=(0,1,1)Q$, when $X=\mathbb{P}^1$, $\deg P = 3$ and $\deg Q = 1$.
The cusps are in darker gray. The vertices in the black path 
can potentially have additional neighbors.  }
\label{f3}
\end{figure}
\end{ex}

\begin{ex}
    If we replace $(0,1,1)$ by $(0,2,0)$ in the preceding example, 
    we obtain the same quotient complex, since both vector divisors
    $(0,1,1)Q$ and $(0,2,0)Q$ correspond to the same orbit in 
    $\nabla_{\mathrm{Pic}(\A)}$. There is a correspondence between both
    complexes that preserves the isomorphism class of the orders associated
    to their vertices. On the other hand, the vector 
    $\BB=(0,1,0)Q$, corresponds to a different orbit, so the
    corresponding QSA, although having a picture very similar 
    to the preceding one, has vertices defined by non-isomorphic
    maximal orders.
\end{ex}

\begin{ex}\label{e7}
    Now set $\BB=(0,1,2)Q$. Then the stabilizer $H$ is generated by 
    the $3$-cycle $(1\,2 \,3)$. This element induces a rotation by 
    $120^\circ$ on the apartment.
    The rotation is induced by the matrix $g=\textnormal{\scriptsize$\left(\begin{array}{ccc}0&1&0\\0&0&1\\f&0&0\end{array}\right)$\normalsize}$,
    where $\mathrm{div}(f)=P-3Q$. 
    This element permutes cyclically the three vertices of a unique
    simplex, namely the orders $\DD_{(0,1,2)Q,\beta}$, 
    $\DD_{(2,0,1)Q,\beta}$ and $\DD_{(1,2,0)Q,\beta}$, corresponding to 
    the points labeled $(1,2)$, $(-2,-1)$ and $(1,-1)$. The center of 
    this simplex is marked with a symbol $\clubsuit$ in Fig. \ref{f4}.
    The C-complex $\mathscr{C}_P(D_0)$, defined by the order 
    $D_0=\mathbb{M}_3(R)$, can be obtained by 
    connecting the border of the fundamental domain in Fig. \ref{f2}
    to the black line in Fig. \ref{f4}.

\begin{comment}
\begin{figure}[h!]
\begin{center}
\begin{tikzpicture}[x=1cm, y=0.866cm] % 0.866 is sin(60), fixing the aspect ratio for equilateral triangles
\fill[gray!30] (2,4) -- (5.5,1.66) -- (9,4) -- cycle;
\node  at (5.5,1.66) {$\clubsuit$};
\node  at (5,2) {$\bullet$};
\node  at (6,2) {$\bullet$};
\node  at (5.5,1) {$\bullet$};
\node  at (5,2.2) {${}^{{}^{(1,2)}}$};
\node  at (6,2.2) {${}^{{}^{(1,-1)}}$};
\node  at (5.5,0.6) {${}^{{}^{(-2,-1)}}$};
\node  at (3.5,3) {$\bullet$};
\node  at (7.5,3) {$\bullet$};
\node  at (3.5,2.6) {${}^{{}^{(4,8)}}$};
\node  at (7.5,2.6) {${}^{{}^{(4,-4)}}$};
  % Loop for the grid rows and columns
  \foreach \row in {0,...,3} {
    \foreach \col in {0,...,6} {
      
      % Shift every second row to the right to align triangles properly
      \pgfmathsetmacro{\xshift}{\col + 0.5*\row}
      
      % Define the base vertex of the upward triangle
      \coordinate (A) at (\xshift, \row);
      \coordinate (B) at (\xshift + 1, \row);
      \coordinate (C) at (\xshift + 0.5, \row + 1);
      
      % Draw the upward triangle
      \draw[thick, black!100!black] (A) -- (B) -- (C) -- cycle;
      
      % Draw a downward triangle to fill the gaps (only if not on the last row)
      \ifnum\row<5
        \coordinate (D) at (\xshift + 1.5, \row + 1);
        \draw[thick, black!100!black] (B) -- (C) -- (D) -- cycle;
      \fi
    }
  }

\end{tikzpicture}
\end{center}

\caption{Fundamental domain by the action of $H=\left\langle(1\,2\,3)\right\rangle$ 
on $\Sigma_{(0,1,2)Q,\beta}$.}
\label{f4}
\end{figure}
\end{comment}

\begin{figure}[h!]
\begin{center}
\begin{tikzpicture}[x=1cm, y=0.866cm] % 0.866 = sin(60)

% --- Rellenos (van primero para que la malla quede encima) ---
% Región delimitada en naranjo: gris claro
\fill[gray!30] (0.8333,1.6667) -- (5.5,1.6667) -- (7.6667,6) -- (3,6) -- cycle;
% Región delimitada en rojo: gris oscuro (parte en (4,8))
\fill[gray!60] (1.5,3) -- (3.5,3) -- (2.5,5) -- cycle;
% Región delimitada en verde: gris oscuro
\fill[gray!60] (5.5,3) -- (7,6) -- (4,6) -- cycle;

% --- Malla triangular ---
\foreach \row in {0,...,5} {
  \foreach \col in {0,...,6} {
    \pgfmathsetmacro{\xshift}{\col + 0.5*\row}
    \coordinate (A) at (\xshift, \row);
    \coordinate (B) at (\xshift + 1, \row);
    \coordinate (C) at (\xshift + 0.5, \row + 1);
    \coordinate (D) at (\xshift + 1.5, \row + 1);
    \draw[thick] (A) -- (B) -- (C) -- cycle;
    \draw[thick] (B) -- (C) -- (D) -- cycle;
  }
}

% --- Camino grueso por (1,5), (1,2) y (4,5) ---
\draw[line width=2.5pt, line cap=round, line join=round]
  (1,2) -- (5,2) -- (3,6);

% --- Puntos ---
\node at (5.5,1.6667) {$\clubsuit$};
\node at (3.5,3) {$\bullet$};
\node at (4.5,3) {$\bullet$};
\node at (5.5,3) {$\bullet$};
\node at (4,2)   {$\bullet$};
\node at (5,2)   {$\bullet$};
\node at (6,2)   {$\bullet$};
\node at (5.5,1) {$\bullet$};

% --- Etiquetas ---
\node at (3.5,3.3)   {${}^{{}^{(4,8)}}$};
\node at (4.5,3.3)   {${}^{{}^{(4,5)}}$};
\node at (5.5,3.3)   {${}^{{}^{(4,2)}}$};
\node at (4,2.2)     {${}^{{}^{(1,5)}}$};
\node at (5,2.2)     {${}^{{}^{(1,2)}}$};
\node at (6,2.2)     {${}^{{}^{(1,-1)}}$};
\node at (5.5,0.6)   {${}^{{}^{(-2,-1)}}$};

\end{tikzpicture}
\end{center}
\vspace{-0.5cm}
\caption{Fundamental domain by the action of $H=\left\langle(1\,2\,3)\right\rangle$ 
on $\Sigma_{(0,1,2)Q,\beta}$.}
\label{f4}
\end{figure}
\end{ex}

The next example discusses the cusp structure of the C-complex when
$X=\mathbb{P}^1$ and $\deg(P)=2$.

\begin{ex}\label{ex9}
The $1$-skeleton of the C-complex when $X$ is the projective line 
and $P$ is a point of degree $2$ is illustrated on page 49 
of \cite{Christ}. The vertex $X_{n,m}$ in the reference corresponds
to the maximal order $\DD_{[[m-n,-n]]Q}\cong\DD_{[[n,m]]Q}$ in
our notation. In this case, the group $\nabla_{\mathrm{Pic}(\A)}$
has two orbits under the symmetric group, namely the identity, 
and the set $\{[[a,0]],[[0,a]],[[a,a]]\}$, where $a$ is the non-trivial
class. The QSA corresponding to non-trivial class is computed in a way 
that is entirely analogous to Example \ref{E66}. In Fig. 
\ref{Figlast}(A), we illustrate a fundamental region for this 
QSA in the apartment $\Sigma_{\BB,P,\epsilon}$ with $\BB=(0,Q,Q)$. 
The vertex labeled with an odd number $n$ corresponds to the maximal 
order $\DD_{(0,Q,Q)+\frac{n-1}2(0,P,P),\epsilon}\cong\DD_{[[n,n]]Q}$.
Since $\DD_{[[n,n]]Q}\cong\DD_{[[0,-n]]Q}$, the vertices labeled
with negative numbers can be identified with those corresponding to 
the orders $\DD_{[[0,1]]Q},\DD_{[[0,3]]Q},\dots$. This allows us to
identify them with the corresponding vertex in the image in the 
reference, where the vertex corresponding to $\DD_{[[i,j]]Q}$
is denoted $X_{i,j}$. We conclude that the subdivision of the 
quotient graph looks as shown in Fig. \ref{Figlast}(B).

\begin{figure}
    \centering
    (A)\begin{tikzpicture}[scale=0.8]

    % --- Rellenos ---
    % Dos filas inferiores: gris claro
    \fill[gray!30] (0,0) -- (5,0) -- (6,1.732) -- (1,1.732) -- cycle;
    % Regiones delimitadas en rojo: gris oscuro
    \fill[gray!60] (0,0) -- (2,0) -- (2.5,0.866) -- (0.5,0.866) -- cycle;
    \fill[gray!60] (3,0) -- (4,0) -- (3.5,0.866) -- cycle;
    \fill[gray!60] (5,0) -- (5.5,0.866) -- (4.5,0.866) -- cycle;

    % --- Malla triangular ---
    \foreach \row in {0,...,3} {
        \foreach \col in {0,...,4} {
            \pgfmathsetmacro{\x}{\col + (\row)*0.5}
            \pgfmathsetmacro{\y}{\row * sqrt(3)/2}
            \pgfmathsetmacro{\xnext}{\x + 1}
            \pgfmathsetmacro{\xup}{\col + (\row+1)*0.5}
            \pgfmathsetmacro{\yup}{(\row + 1) * sqrt(3)/2}
            \pgfmathsetmacro{\xupnext}{\xup + 1}
            \draw[thick] (\x,\y) -- (\xnext,\y) -- (\xup,\yup) -- cycle;
            \draw[thick] (\xnext,\y) -- (\xup,\yup) -- (\xupnext,\yup) -- cycle;
        }
    }

    % --- Puntos y etiquetas ---
    \node at (2,1.73) {$\bullet$};
    \node at (1.7,1.83) {\tiny ${}^{-3}$};
    \node at (3,1.73) {\scriptsize$\spadesuit$\normalsize};
    \node at (2.7,1.83) {\tiny ${}^{-1}$};
    \node at (4,1.73) {\scriptsize$\clubsuit$\normalsize};
    \node at (3.75,1.83) {\tiny ${}^{1}$};
    \node at (5,1.73) {$\bullet$};
    \node at (4.75,1.83) {\tiny ${}^{3}$};
\end{tikzpicture}
\qquad\qquad
    (B)\begin{tikzpicture}[scale=0.8]

    % --- Rellenos ---
    % Fondo gris claro
    \fill[gray!30] (0,0) rectangle (6,6);
    % Regiones en blanco
    \fill[white] (0,2) rectangle (3,3);
    \fill[white] (3,3) rectangle (4,6);
    \fill[white] (3,2) -- (3,3) -- (4,3) -- cycle;
    % Regiones delimitadas en rojo: gris oscuro
    \fill[gray!60] (0,4) rectangle (2,6);
    \fill[gray!60] (5,3) rectangle (6,6);
    \fill[gray!60] (4,0) rectangle (6,2);
    \fill[gray!60] (0,0) rectangle (3,1);

    % --- Cuadrícula ---
    \draw[thick] (0,0) grid (6,6);

    % --- Diagonales "/" ---
    \foreach \x in {3,4,5} {\foreach \y in {0,1,2} {\draw[thick] (\x,\y) -- ++(1,1);}}
    \foreach \x in {0,1,2} {\foreach \y in {3,4,5} {\draw[thick] (\x,\y) -- ++(1,1);}}

    % --- Diagonales "\" ---
    \foreach \x in {0,1,2} {\foreach \y in {0,1,2} {\draw[thick] (\x,\y+1) -- ++(1,-1);}}
    \foreach \x in {3,4,5} {\foreach \y in {3,4,5} {\draw[thick] (\x,\y+1) -- ++(1,-1);}}

    % --- Símbolos ---
    \node at (4,3) {$\spadesuit$};
    \node at (3,2) {$\clubsuit$};
    \node at (3,3) {$\blacksquare$};
\end{tikzpicture}
    \caption{The two QSAs for the quotient complex corresponding to 
    the projective line at a place of degree 2. The square corresponds
    to the maximal order $\DD_{0}$, while $\clubsuit$ and $\spadesuit$
     correspond to the orders $\DD_{[[Q,Q]]}$ and $\DD_{[[Q,0]]}$, 
     respectively, for a degree-1 place $Q$.}
    \label{Figlast}
\end{figure}
\end{ex}

\begin{ex}
    More generally, for $X=\mathbb{P}^1$, and for a point $P$ of degree $d$,
    there is a unique orbit of size $1$ and $d-1$ orbits of size $3$. An orbit
    of size $2$ (corresponding to a cone) appears only when $d=3k$
    and it is the orbit $\{(\!(\vec B)\!),(\!(\vec B')\!)\}$, where
    $\vec B=(0,k,2k)Q$ and $\vec B'=(0,2k,k)Q$. 
    The number of orbits of size
    $6$ is $\frac{d^2-3d}6$ if $3$ divides $d$ and $\frac{d^2-3d+2}6$ otherwise.
\end{ex}

\begin{ex}\label{e10}
The quotients studied in this work provide information on orders that can be
described in terms of the maximal orders containing them. As an example, 
the chambers 
of the complex correspond to conjugacy classes of a 
sheaf-theoretical analog of 
Iwahori orders at $P$. A local Iwahori order is defined 
as the intersection of the
maximal orders corresponding to the vertices in a simplex, 
and a global Iwahori order
at $P$ is an order $\mathfrak{W}$ whose completion $\mathfrak{W}_P$ is a local
Iwahori order, while any other completion $\mathfrak{W}_Q$ is maximal. 
These are examples of hereditary orders, see \cite[\S 39]{Reiner}.
Every white triangle in Fig. \ref{Figlast}(B) corresponds
to a different conjugacy
class of global Iwahori orders at $P$ that are non-split, 
in the sense that they
cannot be written in the form 
$$\mathfrak{W}=\left(\begin{array}{ccc}
\oo_X&\LL^{B_{12}}&\LL^{B_{13}}\\
\LL^{B_{21}}&\oo_X&\LL^{B_{23}}\\
\LL^{B_{31}}&\LL^{B_{32}}&\oo_X
\end{array}\right),$$
for any divisors $B_{ij}$. The figure proves, therefore, the existence 
of infinitely many conjugacy classes of such orders in that case.
\end{ex}

\footnotesize{
\section*{Acknowledgements}
The second author was partially supported by ANID through 
Fondecyt Initiation Grant No. 11260422.
The authors would like to thank Marion Jeannin for 
useful remarks on vector bundles.

\bibliographystyle{abbrv}
\bibliography{sample}
}

\end{document}